\numberwithin{equation}{section}
\newif\ifdviwin
\newtheorem{theorem}{Theorem}[section]
\newtheorem{lemma}[theorem]{Lemma}
\newtheorem{proposition}[theorem]{Proposition}
\newtheorem*{theorem*}{Theorem}
\theoremstyle{definition}
\newtheorem{definition}[theorem]{{Definition}}
\newtheorem{example}[theorem]{Example}
\newtheorem{remark}[theorem]{Remark}
\newtheorem{Question}[theorem]{Question}
\theoremstyle{remark}
\newtheorem*{chunk*}{}
\newcommand{\st}{\colon}
\newcommand{\sm}{\setminus}
\newcommand{\Taylor}{\mathrm{Taylor}}
\newcommand{\Scarf}{\mathrm{Scarf}}
\newcommand{\im}{\mathrm{im}}
\newcommand{\m}{\mathbf{m}}
\newcommand{\bm}{\mathbf{m}}
\newcommand{\lcm}{\mathrm{lcm}}
\newcommand{\LCM}{\mathrm{LCM}}
\newcommand{\dist}{\mathrm{dist}}
\newcommand{\Gens}{\mathrm{Gens}}
\newcommand{\NN}{\mathbb{N}}
\newcommand{\qwhere}{\quad \mbox{ where } \quad }
\newcommand{\qand}{\quad \mbox{ and } \quad }
\newcommand{\qor}{\quad \mbox{ or } \quad }
\newcommand{\qfor}{\quad \mbox{ for } \quad }
\newcommand{\sfk}{\mathsf k}
\newcommand{\void}[1]{}
\begin{document}
\bibliographystyle{amsplain}

\author[S.~Faridi]{Sara Faridi}
\address{Department of Mathematics \& Statistics\\
Dalhousie University\\
6316 Coburg Rd.\\
PO BOX 15000\\
Halifax, NS\\
Canada B3H 4R2}
\email{faridi@dal.ca}

\author[T.~H.~H\`a]{T\`ai Huy H\`a}
\address{Tulane University\\
Department of Mathematics\\
6823 St. Charles Avenue\\
New Orleans, LA 70118, USA}
\email{tha@tulane.edu}

\author[T.~Hibi]{Takayuki Hibi}
\address{Department of Pure and Applied Mathematics\\
Graduate School
of Information Science and Technology \\
Osaka University, Suita \\
Osaka 565-0871, Japan}
\email{hibi@math.sci.osaka-u.ac.jp}

\author[S.~Morey]{Susan Morey}
\address{Department of Mathematics\\
Texas State University\\
601 University Dr.\\
San Marcos, TX 78666
\\USA}
\email{morey@txstate.edu}

\keywords{Scarf complex, edge ideal, graph, tree, powers of edge
  ideals, Betti numbers; free resolution; monomial ideal}

\subjclass[2010]{13D02; 13F55}

\title{Scarf Complexes of Graphs and their Powers}

\begin{abstract} Every multigraded free resolution of a monomial ideal $I$ contains the Scarf multidegrees of $I$. We say $I$ has a Scarf resolution if the Scarf multidegrees are sufficient to describe a minimal free resolution of $I$.
The main question of this paper is which graphs $G$ have edge ideal $I(G)$ with a Scarf resolution?
 We show that $I(G)$ has a Scarf resolution  if and only if $G$ is a gap-free
 forest. We also classify connected graphs for which $I(G)^t$ has a Scarf
 resolution, for $t \geq 2$. Along the way, we give a concrete description of the Scarf complex of any
 forest. For a general graph, we give a recursive construction for its
 Scarf complex based on Scarf complexes of induced subgraphs.
\end{abstract}

\maketitle

\section{Introduction}
\label{s:introduction}
Constructing the minimal free resolution for a monomial ideal in a polynomial ring is a classical research topic in commutative algebra which continues to inspire current work. In essence, a minimal free resolution encodes dependence relations between polynomials. As a basic example, we can consider two single-term polynomials $f=xy$ and $g=yz$. Then the dependence relation between $f$ and $g$ is $zf-xg=0$. The (\say{multigraded}) minimal free resolution will keep track of these relations via an exact sequence of  free modules, indexed by the least common multiples of the variables appearing in each relation:
\begin{equation}\label{eq:res1} 0 \longrightarrow S(xyz) \longrightarrow S(xy)\oplus S(yz).\end{equation}
Here $S$ stands for the polynomial ring in three variables over a field. The minimal free resolution above can be thought of an exact sequence of maps of vector spaces, for all practical purposes. 

Our concern in this paper is the multidegrees that appear in the (minimal) free resolution of edge ideals of graphs, and of their powers.
The polynomials $f$ and $g$ above represent edges of the  graph $G_1$ below. 
$$
\begin{tabular}{ccc}
\begin{tikzpicture}
	\tikzstyle{point}=[inner sep=0pt]
	\node (a)[point, label=left:$x$] at (1,1) {};
	\node (b)[point, label=above:$y$] at (2,2) {};
	\node (c)[point, label=right:$z$] at (3,1) {};

	\draw (a.center) -- (b.center);
	\draw (c.center) -- (b.center);
	
	\filldraw [black] (a.center) circle (1pt);
	\filldraw [black] (b.center) circle (1pt);
	\filldraw [black] (c.center) circle (1pt);
	\end{tikzpicture}
 &
 \begin{tikzpicture}
	\tikzstyle{point}=[inner sep=0pt]
	\node (a)[point, label=left:$x$] at (1,1) {};
	\node (b)[point, label=left:$y$] at (1,2) {};
	\node (c)[point, label=right:$z$] at (3,2) {};
    \node (d)[point, label=right:$w$] at (3,1) {};

	\draw (a.center) -- (b.center);
	\draw (b.center) -- (c.center);
    \draw (c.center) -- (d.center);
	
	\filldraw [black] (a.center) circle (1pt);
	\filldraw [black] (b.center) circle (1pt);
	\filldraw [black] (c.center) circle (1pt);
 	\filldraw [black] (d.center) circle (1pt);
	\end{tikzpicture}
&
\begin{tikzpicture}
	\tikzstyle{point}=[inner sep=0pt]
	\node (a)[point, label=left:$x$] at (1,1) {};
	\node (b)[point, label=left:$y$] at (1,2) {};
	\node (c)[point, label=right:$z$] at (3,2) {};
    \node (d)[point, label=right:$w$] at (3,1) {};

	\draw (a.center) -- (b.center);
    \draw (c.center) -- (d.center);
	
	\filldraw [black] (a.center) circle (1pt);
	\filldraw [black] (b.center) circle (1pt);
	\filldraw [black] (c.center) circle (1pt);
 	\filldraw [black] (d.center) circle (1pt);
	\end{tikzpicture}
\\
$G_1$ & $G_2$ & $G_3$
\end{tabular}
$$ 
In this example, the free resolution stopped after one step because there were no more relations to consider, but in general, we continue building this sequence using relations between the edges (the second step), then the relations between those relations (the third step), and so on. Hilbert's syzygy theorem guarantees that over a polynomial ring this process stops, so that every free resolution is finite. 

While we know that minimal free resolutions over polynomial rings are finite, constructing them in general is quite challenging. 
When the free resolution is built on relations between monomials (such as edges of a graph), there are concrete methods one could use. One such method is Taylor's resolution~\cite{T}, a free resolution (most often nonminimal) in which, in the case of a graph, at the $i$th step, the monomial indices appearing are the products of vertices of every subset of the edges of size $i$. For example, the free resolution in \eqref{eq:res1} is a Taylor resolution, and for the graphs $G_2$  and $G_3$ above the Taylor resolutions appear on the left and right below, respectively.
$$
0 \longrightarrow 
\begin{array}{c}S(xyzw)\end{array}  
\longrightarrow 
\begin{array}{c}S(xyz)\\ \oplus\\S(yzw) \\\oplus\\S(xyzw)\end{array}  
\longrightarrow 
\begin{array}{c}S(xy)\\ \oplus\\S(yz) \\\oplus\\S(zw)\end{array}
\quad \quad \quad
0 \longrightarrow 
\begin{array}{c}S(xyzw)\end{array}  
\longrightarrow 
\begin{array}{c}S(xy)\\ \oplus\\S(zw)\end{array}
$$
Once we have a resolution, the next question would be to identify which of the multigraded components are redundant, or in other words, to identify the {\it minimal} free resolution. This is where the notion of {\it Scarf multidegrees} comes in: they are the monomials indexing the Taylor resolution that appear exactly once. In the case of $G_1$ and $G_3$, all the monomials are unique, and so the edge ideals of both those graphs have {\it Scarf resolutions}. In the case of $G_2$, however, the monomial $xyzw$ appears twice, once in the second and once in the third step of the Taylor resolution, as 
$$xyzw= xy\cup yz \cup zw=xy \cup zw.$$
In other words, the edge $yz$, which is forming a \say{bridge} between $xy$ and $zw$ in $G_2$, is causing the formation of a non-Scarf multidegree in the Taylor resolution. 

It was shown by  Bayer, Peeva and Sturmfels~\cite{BPS} that all Scarf multidgrees appear in the minimal free resolution of a monomial ideal, though  the minimal resolution may also contain non-Scarf multidegrees. In other words, the Scarf multidegrees may be thought of as a \say{lower bound} for the minimal free resolution of a monomial ideal. Ideals whose minimal resolutions consist of only Scarf multidgrees are said to have \say{Scarf} resolutions. 

The question we ask in this paper is:

\begin{Question} Can one correlate Scarf resolutions with the shape of the graph? What graphs have edge ideals  with Scarf resolutions? What about the powers of those edge ideals? 
\end{Question}

Our main result is the following, and it shows that, in fact, $G_2$ also has a Scarf resolution, but it is not as obvious. 

\begin{theorem*}[{\bf \cref{t:beautiful}: The \say{Beautiful Oberwolfach Theorem}}]  
Let $G$ be a graph with edge ideal $I = I(G)$. 
  \begin{enumerate}
  \item $I$ has a Scarf resolution if and only if $G$ is a gap-free forest.
  \end{enumerate}
   If $G$ is connected and $t>1$, then
  \begin{enumerate}[resume]
  \item $I^t$ has a Scarf resolution if and only if $G$ is an isolated vertex, an edge, or a path of length~$2$.
  \end{enumerate}
\end{theorem*}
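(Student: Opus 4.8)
The plan is to prove the two statements in parallel. The structural input is the explicit description of $\Scarf(I(G))$ for a forest $G$ and the recursive construction of Scarf complexes of edge ideals from those of induced subgraphs, both obtained earlier. The second ingredient is a \emph{monotonicity lemma}: for an induced subgraph $H=G[W]$ and any $t\ge 1$, the monomials of $I(G)^t$ lying in $\sfk[W]$ are exactly the monomials of $I(H)^t$, so by the restriction property of multigraded Betti numbers of monomial ideals $\beta_{i,\sigma}(I(G)^t)=\beta_{i,\sigma}(I(H)^t)$ for every $\sigma\subseteq W$, and every Scarf multidegree of $I(G)^t$ supported on $W$ is a Scarf multidegree of $I(H)^t$. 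Since, by the Bayer--Peeva--Sturmfels theorem, $I^t$ has a Scarf resolution if and only if $\beta_{i,\sigma}(I^t)$ equals, for every $i$ and $\sigma$, the number of faces of $\Scarf(I^t)$ in homological degree $i$ and multidegree $\sigma$, this forces that if $I(H)^t$ has no Scarf resolution then neither does $I(G)^t$. Hence in each part it suffices to (a) treat the positive cases directly, and (b) show that any graph violating the stated condition contains, as an induced subgraph, a member of a short list of graphs whose (power of the) edge ideal has no Scarf resolution.

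\emph{Part (1).} A gap-free forest is a disjoint union of stars and double stars. For the positive direction, $\Scarf$, $\Taylor$, and minimal free resolutions all behave multiplicatively over disjoint unions of graphs (disjoint variable sets), so it reduces to a single star or double star: for $K_{1,n}$ the generators $xy_1,\dots,xy_n$ have distinct lcms on every subset, so $\Scarf(I)=\Taylor(I)$, which is minimal; for a double star one reads $\Scarf(I)$ off the forest description and checks (directly, or via a Hochster-formula count) that it resolves $I$. For the converse, by the monotonicity lemma it suffices that $I(C_3),I(C_4),I(C_5)$ and $I(P_5)$ have no Scarf resolution --- in each of these finitely many cases one computes $\Scarf(I)$ explicitly and finds $\dim\Scarf(I)<\pd_S I$ --- together with the combinatorial observation that a graph which is not a disjoint union of stars and double stars either has a component containing a cycle, hence (via a chordless cycle, using five consecutive vertices if its length exceeds five) an induced $C_3,C_4,C_5$, or $P_5$, or has a component that is a tree of diameter at least four, hence an induced $P_5$.

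\emph{Part (2).} If $G$ is an isolated vertex, $I=0$; if $G$ is an edge, $I^t$ is principal; if $G=P_3$ with midpoint $y$, then $I^t=y^t(x,z)^t$ and $\Scarf((x,z)^t)$ is the path on the generators $x^t,x^{t-1}z,\dots,z^t$, which supports the (minimal) free resolution. For the converse, every connected graph other than an isolated vertex, an edge, or $P_3$ has one of $K_3,C_4,P_4,K_{1,3}$ as an induced subgraph --- a triangle yields an induced $K_3$, and otherwise the graph is triangle-free with at least four vertices, so a four-vertex subtree of a spanning tree spans an induced $P_4$, $C_4$, or $K_{1,3}$ once triangle-freeness excludes the possible chords --- so by the monotonicity lemma it remains to show $I(H)^t$ has no Scarf resolution for all $t\ge 2$ and $H\in\{K_3,C_4,P_4,K_{1,3}\}$.

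I expect this last point to be the main obstacle. For $K_{1,3}$ it reduces to $(x,y,z)^t$, where $\pd_S(S/(x,y,z)^t)=3$ makes $\beta_2\ne 0$ while the corner monomial $x^ty^tz^t$ (and its analogues) is a non-Scarf multidegree for every $t\ge 2$, being also the lcm of strictly larger faces, so $\Scarf$ has no $2$-face; I would hunt for analogous persistently non-Scarf multidegrees for $K_3$, $C_4$, and $P_4$. Arranging such arguments to be uniform in $t$ (as opposed to checking $t=2$ and inducting, which is not available here) is the delicate part, and is presumably where the recursive description of Scarf complexes does the real work; by contrast, the positive half of part (1) for double stars and all the restriction-and-Betti bookkeeping should be comparatively routine.
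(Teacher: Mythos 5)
Your overall architecture is the same as the paper's: a reduction to induced subgraphs (your monotonicity lemma is a Betti-number rephrasing of what the paper gets from \cref{l:induced} together with \cref{t:BPS} and the fact that the Scarf complex embeds in every minimal multigraded resolution), the forbidden lists $C_3,C_4,C_5$ and the path of length $4$ for $t=1$, and triangle, square, path of length $3$, claw for $t\ge 2$, plus direct treatment of the positive cases (stars, double stars, and for $t\ge2$ the edge and the path of length $2$). The $t=1$ half is fine as sketched: the four forbidden graphs are finite explicit computations, your induced-subgraph argument for the classification is correct, and the double-star verification (acyclicity of every restriction $\Delta_\m$, done in the paper's proof of \cref{t:B36}) is routine.

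The genuine gap is exactly where you flag it: you never prove, uniformly in $t\ge 2$, that $I(H)^t$ fails to be Scarf when $H$ is a triangle, a square, or a path of length $3$; you only say you would \say{hunt for persistently non-Scarf multidegrees}. Even your claw argument is incomplete: knowing that the corner multidegree $x^ty^tz^t$ is non-Scarf does not show that $\Scarf((x,y,z)^t)$ has no $2$-face --- for that you must show that \emph{every} triple of generators has a repeated lcm (e.g.\ $\lcm(x^2,xy,xz)=x^2yz$ is not a corner monomial, and its repetition must be exhibited separately). Moreover your plan requires lower bounds on specific multigraded Betti numbers of $I(H)^t$ for all $t$, which is an unproved input of comparable difficulty. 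This missing step is the main technical content of the paper: \cref{t:non-Scarf} computes $\Scarf(I(H)^t)$ explicitly and uniformly in $t$ (isolated vertices for the triangle, a $3t$-cycle plus isolated vertices for the claw, one-dimensional grid-type complexes for the square and the path of length $3$), and non-Scarfness then follows from \cref{t:BPS} alone, because the restriction to the single multidegree $a^tb^tc^t$ (respectively $a^tb^tc^td^t$) has nontrivial reduced homology --- no Betti numbers of powers are needed. Without an argument of this kind, or some other device that works for all $t$ simultaneously, part (2) of the theorem is not established by your proposal.
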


Finding combinatorial interpretations of the multidegrees appearing the the minimal free resolution of a monomial ideal is an active area of research. The monomials appearing in a multigraded free resolution can be thought of as labels on faces of simplicial or more generally cell complexes which \say{support} that resolution, for example the Taylor complex or the Scarf complex (see \cref{s:Scarf} for more details).
A central problem in this research area is to construct cellular resolutions: simplicial or cell complexes that \textit{support} the minimal free resolution of a given monomial ideal (cf. \cite{AMFRG2020, BPS, BS, BM2020, BW2002, CK2022, CT2016, EMO, L, OY2015, PV, V}) and their powers~\cite{L2,Lr}. Of particular interest to a graph theorist might be \cite{koszul}, where the authors use a generalization of the box product of graphs to construct cell complexes supporting minimal resolutions of powers of certain monomial ideals.
In general, very little is known about when there are such minimal resolutions even for the special classes
of monomial ideals, those arising as the \textit{edge ideals} of graphs (cf. \cite{BM2020, CK2022}). 

To prove \cref{t:beautiful} for $t=1$, we make use of the characterization of a \textit{gap-free} tree as a graph which contains no induced subgraphs isomorphic to a \textit{triangle}, a \textit{square}, a \textit{pentagon} or a \textit{path} of length 4, and prove that the edge ideals of these particular graphs do not have Scarf resolutions. To establish \cref{t:beautiful} for $t \ge 2$, we show that if $G$ is not an isolated vertex, an edge or a path of length 2, then $G$ contains an induced subgraph that is isomorphic to either a triangle, a square, a path of length three, or a \textit{claw} with three edges, and exhibit that powers of the edge ideals of these special graphs are not supported by their Scarf complexes.

Of our steps in the proof of \cref{t:beautiful} the following are worth highlighting.

\begin{itemize} 
\item{\bf (\cref{t:non-Scarf})} the Scarf
complexes of the edge ideals of triangles, squares, claws and their
powers are constructed explicitly. 

\item \textbf{(\cref{t:tree})} the Scarf complex of the edge ideal of any
forest is completely described.
\end{itemize}
  
This paper is organized as follows. In the next section, we collect 
important facts and terminology about free resolutions and Scarf
complexes. In \cref{s:subgraph}, we look at how the Scarf complex of a graph behaves
when an edge is removed. This allows us to study the Scarf complexes
of subgraphs of a given graph. \cref{s:induced} contains a simple and yet important observation about
Scarf complexes of induced subgraphs, see \cref{l:induced}, then focuses on the
Scarf complex of a graph when a vertex is removed. This allows us to
investigate the Scarf complex of induced subgraphs, and presents a
recursive method to construct the Scarf complex of any graph. In
\cref{s:tree}, we apply results in \cref{s:induced} to
completely describe the Scarf complex of any tree.
\cref{s:ScarfGraphs} is devoted to classifying graphs whose Scarf
complexes support a minimal free resolution of their edge ideals. The
case $t = 1$ of our main result is proved in this
section, see \cref{t:B36}. In \cref{s:Scarf-I2}, we continue our investigation of
graphs for which the powers of the edge ideals have minimal free resolutions
supported by their Scarf complexes, addressing the general case, when
$t \ge 2$, of \cref{t:beautiful}.

\section{Simplicial Resolutions}
\label{s:Scarf}

Throughout this paper, $\sfk$ denotes an arbitrary field and $S=\sfk[x_1,\ldots,x_n]$ is a polynomial ring over $\sfk$. We will identify the variables in $S$ with $n$ distinct points which, by abusing notation, shall also be labeled by $\{x_1, \dots, x_n\}$ and often represent the vertices of a graph.

Let $I \subseteq S$ be a homogeneous ideal. A {\bf free resolution} of $I$ is a (finite) exact sequence of free modules
$$
0 \to S^{c_q} \stackrel{d_q}{\longrightarrow} \cdots \stackrel{d_2}{\longrightarrow} S^{c_1} \stackrel{d_1}{\longrightarrow} S^{c_0}
  $$
where $\im(d_1)=I$. The smallest possible such sequence, that is,
the one with the smallest possible values for the integers $c_i$, is
called a {\bf minimal free resolution } of $I$, and is unique up to
isomorphism of complexes:

\begin{equation}\label{e:mfr}
  0 \to S^{\beta_p} \stackrel{d_p}{\longrightarrow} \cdots
  \stackrel{d_2}{\longrightarrow} S^{\beta_1}
  \stackrel{d_1}{\longrightarrow} S^{\beta_0}.
\end{equation}
Some of the algebraic invariants of $I$ that are visible in
\eqref{e:mfr} are the {\bf Betti numbers} $\beta_i$, and the
length $p$ of the minimal free resolution which is called the {\bf projective dimension} of $I$.

A free resolution of an ideal is essentially built upon the relations
between the generators of the ideal, also known as the {\it syzygies}
of the ideal. In 1966 Taylor~\cite{T} suggested an innovative approach
to constructing a free resolution for a \textit{monomial} ideal $I$ minimally
generated by $q$ monomials, by \say{homogenizing} the chain complex of
a simplex. The process goes as follows:
\begin{itemize}
	\item construct a simplex with $q$ vertices;
	\item label each vertex with one of the monomial generators of $I$;
	\item label each face $\sigma$ with a monomial $\m_\sigma$ which is the least common multiple (lcm)
of the vertex labels of $\sigma$;
	\item use the labels of each face to \say{homogenize} the simplicial chain complex of the $q$-simplex.
\end{itemize}
In her thesis, Taylor proved that this homogenized chain complex is a free
resolution of $I$.

We call the $q$-simplex labeled with the monomial generators of $I$ the
{\bf Taylor complex} of $I$, denoted by $\Taylor(I)$. The
resulting free resolution is called the {\bf Taylor resolution} of $I$.

\begin{example}\label{e:Taylor}
If $I=(xyz, x^2z, xy^2)$, then $\Taylor(I)$ is:
$$
\begin{tikzpicture}
	\tikzstyle{point}=[inner sep=0pt]
	\node (a)[point, label=left:$xyz$] at (1,1) {};
	\node (b)[point, label=above:$x^2z$] at (2.5,2.5) {};
	\node (c)[point, label=right:$xy^2$] at (4,1) {};

	\draw (a.center) -- (b.center);
	\draw (b.center) -- (c.center);
	\draw (a.center) -- (c.center);
	
	\filldraw [black] (a.center) circle (1pt);
	\filldraw [black] (b.center) circle (1pt);
	\filldraw [black] (c.center) circle (1pt);
	
	\draw  [fill=gray!30] (a.center) -- (b.center) -- (c.center) -- cycle;
	
	\pgfputat{\pgfxy(2,0.8)}{\pgfbox[left,center]{\tiny{$ xy^2z$}}}
	\pgfputat{\pgfxy(1.05,2)}{\pgfbox[left,center]{\tiny{$x^2yz$}}}
	\pgfputat{\pgfxy(3.3,2)}{\pgfbox[left,center]{\tiny{$x^2y^2z$}}}
	\pgfputat{\pgfxy(2.1,1.7)}{\pgfbox[left,center]{\small{$x^2y^2z$}}}
\end{tikzpicture}
$$
\end{example}

Since the Taylor resolution is obtained directly by labeling faces and
maps that appear in the a simplicial chain complex of a simplex, the
rank of the free module appearing in each homological degree $i$ will
be the number of $i$-dimensional faces of the simplex. Therefore,
an ideal with $q$ monomial generators in a polynomial ring $S$ has a
Taylor resolution of the following form:
$$ 0 \rightarrow S \rightarrow S^{{q}\choose{q-1}} \rightarrow \cdots
\rightarrow S^{{q}\choose{i}} \rightarrow \cdots \rightarrow S^q.$$

The monomial labels on each face of the Taylor complex allow us to
reinterpret the Taylor resolution as a {\bf multigraded} resolution.
For instance, if the monomial labels of the $i$-dimensional faces of
the Taylor simplex are $\m_1,\ldots,\m_{{q}\choose{i}}$, then in the
$i$-th homological degree of the Taylor resolution, the free
module $S^{{q}\choose{i}}$ can be represented as
$$S(\m_1) \oplus \cdots \oplus S(\m_{{q}\choose{i}}).$$
The monomials appearing in the Taylor complex are least common multiples of the
corresponding generators of $I$, and therefore belong to the {\bf lcm lattice} of
$I$: an atomic lattice denoted by $\LCM(I)$, whose atoms are the
minimal monomial generators of $I$ and the meet of any two elements is
their lcm.

If $I$ is the ideal in \cref{e:Taylor}, then the elements of $\LCM(I)$ are the monomials
$$ xyz, x^2z, xy^2, x^2yz,xy^2z, x^2y^2z$$ which label the faces of  $\Taylor(I)$.

The multigrading of the Taylor resolution is
then inherited by the minimal free resolution, and in particular, the
Betti numbers of $I$ can be written as a sum of multigraded Betti
numbers:
$$\beta_i(I)=\sum_{\m \in \LCM(I)} \beta_{i,\m}(I),$$
where $\beta_{i,\m}(I)$ refers to the number of times the summand $S(\m)$ appears in the
$i$-th homological degree of the multigraded minimal free resolution
of $I$.

It is natural to wonder if a similar construction to the Taylor
complex can be applied to a more general simplicial complex $\Delta$ with $q$
vertices, by homogenizing its simplicial chain complex, in order to
find a free resolution of a monomial ideal with $q$ generators. If
such a resolution exists, then we say that $\Delta$ {\bf supports} a
resolution of $I$. This resolution would be naturally contained in the
Taylor resolution.

Bayer, Peeva and Sturmfels~\cite{BPS, BS} explored this question, and
offered a criterion for a subcomplex $\Delta$ of $\Taylor(I)$ to support a
free resolution of $I$. For such a simplicial complex $\Delta$ and a
monomial $\m$, we use the notation $\Delta_\m$ to denote the induced
subcomplex of $\Delta$ on the vertices who labels divide $\m$. In
other words
\begin{equation}\label{e:induced}
  \Delta_\m =\{ \sigma \in \Delta \st \m_\sigma \mid \m\}
\end{equation}
where $\m_\sigma$ represents monomial label of $\sigma$, or
equivalently the lcm of the monomial labels of the vertices of
$\sigma$.

\begin{theorem}[{\bf Supporting a Free Resolution}~\cite{BPS}]\label{t:BPS}
  A simplicial complex $\Delta$ on $q$ vertices supports a free
  resolution of a monomial ideal $I$ minimally generated by $q$
  monomials in a polynomial ring over a field, if an only if for every
  $\m \in \LCM(I)$ the induced subcomplex $\Delta_\m$, on vertices of
  $\Delta$ whose labels divide $\m$, is empty or acyclic. The resolution
  is minimal if for every pair of faces $\sigma, \tau \in \Delta$ with
  $\sigma \subsetneq \tau$, $\m_\sigma \neq \m_\tau$.
\end{theorem}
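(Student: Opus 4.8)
The plan is to deduce the criterion from a single \emph{multidegree‑by‑multidegree} comparison: the $\ZZ^n$‑graded complex obtained by homogenizing the oriented chain complex of $\Delta$ is exact exactly when each of its graded strands is exact, and each graded strand turns out to be (up to a shift) the reduced simplicial chain complex of a subcomplex of the form $\Delta_\m$ over $\sfk$.

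First I would make the complex explicit. Fix an ordering $v_1 < \dots < v_q$ of the vertices of $\Delta$, with $v_i$ labelled by the minimal generator $\m_i$ of $I$, and for a face $\sigma$ set $\m_\sigma = \lcm\{\m_v : v \in \sigma\}$. Homogenizing gives the complex $\mathbf{F}_\Delta$ with $F_k = \bigoplus_{\dim \sigma = k-1} S(-\deg \m_\sigma)\,e_\sigma$ and differential $\partial(e_\sigma) = \sum_{v \in \sigma} \pm\,(\m_\sigma/\m_{\sigma \setminus v})\,e_{\sigma\setminus v}$; here $F_0 = S^q$ and the augmentation $\varepsilon\colon F_0 \to S$, $e_{v_i} \mapsto \m_i$, has image $I$. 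Thus ``$\Delta$ supports a free resolution of $I$'' means precisely that $\cdots \to F_1 \to F_0 \xrightarrow{\varepsilon} S \to S/I \to 0$ is exact, and since the cokernel of $\varepsilon$ is $S/I$ by construction, this is the same as saying $\mathbf{F}_\Delta \xrightarrow{\varepsilon} S$ has vanishing homology in every homological degree $\ge 0$.

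The heart of the proof is the identification: for each $\ba \in \NN^n$, the degree‑$\ba$ strand of $\mathbf{F}_\Delta \xrightarrow{\varepsilon} S$ as a complex of $\sfk$‑vector spaces is canonically isomorphic to the augmented oriented chain complex $\widetilde{C}_\bullet(\Delta_{x^\ba}; \sfk)$ of $\Delta_{x^\ba} = \{\sigma \in \Delta : \m_\sigma \mid x^\ba\}$, under the dictionary $F_k \leftrightarrow (k{-}1)$‑faces, $S \leftrightarrow$ the empty face. Indeed $S(-\deg \m_\sigma)_\ba$ is $1$‑dimensional over $\sfk$ exactly when $\m_\sigma \mid x^\ba$, i.e.\ when $\sigma \in \Delta_{x^\ba}$, and the monomial coefficients $\pm\,\m_\sigma/\m_{\sigma\setminus v}$ of $\partial$ act as $\pm 1$ on these lines, reproducing the simplicial boundary with its signs. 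Granting this, $\mathbf{F}_\Delta$ is a free resolution of $I$ iff $\widetilde{H}_k(\Delta_{x^\ba};\sfk) = 0$ for all $k \ge 0$ and all $\ba$ (nothing is required in degree $-1$, which records $(S/I)_\ba$). When $\Delta_{x^\ba}$ has a vertex the augmentation is surjective, so $\widetilde{H}_{-1}$ also vanishes and the condition says $\Delta_{x^\ba}$ is acyclic; when $\Delta_{x^\ba}$ has no vertex it is automatically satisfied, which is the ``empty'' alternative. To pass from ``all $\ba$'' to ``all $\m \in \LCM(I)$'', I would note that if $\Delta_{x^\ba}$ contains a vertex then, setting $\m = \lcm\{\m_v : v \in \Delta_{x^\ba}\} \in \LCM(I)$, one has $\m_\sigma \mid x^\ba \iff \m_\sigma \mid \m$ for every $\sigma \in \Delta$, so $\Delta_{x^\ba} = \Delta_\m$; hence the nonempty complexes among the $\Delta_{x^\ba}$ are exactly the $\Delta_\m$ with $\m \in \LCM(I)$, and the empty ones impose no condition. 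This yields the stated equivalence. For minimality I would invoke the standard fact that a free resolution by $\ZZ^n$‑graded $S$‑modules is minimal iff every matrix entry of every $\partial_k$ lies in $(x_1,\dots,x_n)$; the entry from $e_\sigma$ to $e_{\sigma\setminus v}$ is the monomial $\pm\,\m_\sigma/\m_{\sigma\setminus v}$, which is a unit exactly when $\m_{\sigma\setminus v} = \m_\sigma$, and since labels are nondecreasing along inclusions, the existence of such a covering pair is equivalent to the existence of faces $\sigma \subsetneq \tau$ with $\m_\sigma = \m_\tau$.

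The main obstacle is the multidegree identification: one must be careful with the reduced‑chain conventions — the bookkeeping role of the empty face, and the precise meaning of ``empty or acyclic'' according to whether $\Delta_{x^\ba}$ does or does not contain a vertex — and must verify that the monomial entries of $\partial$ really do restrict, in each fixed multidegree, to the simplicial incidence signs $\pm 1$, so that $H_\bullet(\mathbf{F}_\Delta)_\ba$ is \emph{literally} the reduced simplicial homology of $\Delta_{x^\ba}$ with $\sfk$‑coefficients. Once that dictionary is nailed down, the two implications of the theorem, and the minimality clause, are routine.
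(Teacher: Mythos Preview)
Your argument is correct and is essentially the standard proof from \cite{BPS}: homogenize the oriented chain complex of $\Delta$, observe that the multidegree-$\ba$ strand is the augmented reduced chain complex of $\Delta_{x^\ba}$ over $\sfk$, reduce to $\m \in \LCM(I)$ by noting that every nonempty $\Delta_{x^\ba}$ coincides with some $\Delta_\m$, and read off minimality from the monomial entries $\pm\,\m_\sigma/\m_{\sigma\setminus v}$ of the differential. The paper itself does not prove this statement; it is quoted as background from \cite{BPS}, so there is no in-paper proof to compare against. Your sketch matches the original argument, and the one delicate point you flag --- the bookkeeping with the empty face and the dichotomy ``empty or acyclic'' --- is exactly where care is needed; you have handled it correctly.
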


The last sentence in \cref{t:BPS} makes it clear why the Taylor
resolution is usually not minimal. In \cref{e:Taylor}, the
monomial label $x^2y^2z$ is shared between a face and a subface, making
the Taylor resolution for the ideal $(xyz, x^2z, xy^2)$ non-minimal.

Naturally one would remove a face and a subface that share a label,
and check, for example using \cref{t:BPS}, if the remaining complex
supports a resolution. An extreme application of this idea is to
remove {\it all} faces which have the same label, regardless of
whether one is embedded in the other or not: for a monomial ideal $I$,
the {\bf Scarf complex} of $I$, denoted by $\Scarf(I)$, is the subcomplex of
$\Taylor(I)$ consisting of all faces whose labels are unique in
$\Taylor(I)$.

\begin{example}
If $I=(xyz, x^2z, xy^2)$ as in \cref{e:Taylor}, the label $x^2y^2z$ is
repeated in $\Taylor(I)$, but all other labels occur on a unique
face. Thus $\Scarf(I)$ is the complex depicted below:
$$
\begin{tikzpicture}
	\tikzstyle{point}=[inner sep=0pt]
	\node (a)[point, label=left:$xyz$] at (1,1) {};
	\node (b)[point, label=above:$x^2z$] at (2.5,2.5) {};
	\node (c)[point, label=right:$xy^2$] at (4,1) {};

	\draw (a.center) -- (b.center);
	\draw (a.center) -- (c.center);
	
	\filldraw [black] (a.center) circle (1pt);
	\filldraw [black] (b.center) circle (1pt);
	\filldraw [black] (c.center) circle (1pt);

	\pgfputat{\pgfxy(2,0.8)}{\pgfbox[left,center]{\tiny{$ xy^2z$}}}
	\pgfputat{\pgfxy(1.05,2)}{\pgfbox[left,center]{\tiny{$x^2yz$}}}
\end{tikzpicture}
$$
\end{example}

The (homogenization of the) Scarf complex of $I$ is contained in every
multigraded free resolution of $I$, but just as the Taylor complex is
often non-minimal, the Scarf complex is often too small to support a
minimal free resolution of $I$.

\begin{definition}[{\bf Scarf Ideals}]
	A monomial ideal $I \subseteq S$ is called a {\bf Scarf ideal} if $\Scarf(I)$ supports a free resolution of $I$. In this case, we also say that $I$ has a {\bf Scarf resolution}.
\end{definition}

Note that if an ideal is Scarf, the resolution supported on $\Scarf(I)$
will necessarily be a minimal free resolution of $I$.

Recall that the {\bf join} $\Delta * \Gamma$ of two simplicial complexes $\Delta$ and $\Gamma$ with disjoint vertex sets is the simplicial complex 
$$\Delta * \Gamma =\{ \sigma \cup \tau \st \sigma \in \Delta, \  \tau \in \Gamma\}.$$

We will make use of the following fact throughout the paper. 

\begin{lemma}\label{l:join} 
If $I$ and $J$ are monomial ideals in disjoint sets of variables, then $$\Scarf(I+J)=\Scarf(I)*\Scarf(J).$$
\end{lemma}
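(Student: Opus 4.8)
**Proof proposal for Lemma \ref{l:join} ($\Scarf(I+J) = \Scarf(I) * \Scarf(J)$).**

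The plan is to unwind the definition of the Scarf complex in terms of uniqueness of $\lcm$-labels and to exploit the fact that $I$ and $J$ live in disjoint variable sets. First I would set up notation: let $I$ be minimally generated by monomials $u_1, \dots, u_p$ in one set of variables and $J$ by $v_1, \dots, v_r$ in a disjoint set, so $I+J$ is minimally generated by $u_1, \dots, u_p, v_1, \dots, v_r$ and the vertex set of $\Taylor(I+J)$ is the disjoint union of the vertex sets of $\Taylor(I)$ and $\Taylor(J)$. A face of $\Taylor(I+J)$ is then a pair $(\sigma, \tau)$ with $\sigma$ a face of $\Taylor(I)$ (possibly empty) and $\tau$ a face of $\Taylor(J)$ (possibly empty), and because the variables are disjoint the label factors as $\m_{\sigma \cup \tau} = \m_\sigma \cdot \m_\tau$, with $\m_\sigma$ a monomial in the first variable set and $\m_\tau$ in the second.

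The key observation, which I would state and prove as a short claim, is that two faces $(\sigma_1, \tau_1)$ and $(\sigma_2, \tau_2)$ of $\Taylor(I+J)$ have the same label if and only if $\m_{\sigma_1} = \m_{\sigma_2}$ and $\m_{\tau_1} = \m_{\tau_2}$; this is immediate from unique factorization of monomials and the disjointness of the two variable sets (the first-set part of $\m_\sigma \m_\tau$ is exactly $\m_\sigma$). Consequently, the label of $(\sigma, \tau)$ is unique among all faces of $\Taylor(I+J)$ precisely when $\m_\sigma$ is unique among the labels of faces of $\Taylor(I)$ \emph{and} $\m_\tau$ is unique among the labels of faces of $\Taylor(J)$ — that is, precisely when $\sigma \in \Scarf(I)$ and $\tau \in \Scarf(J)$. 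This gives $\sigma \cup \tau \in \Scarf(I+J) \iff \sigma \in \Scarf(I)$ and $\tau \in \Scarf(J)$, which is exactly the statement that $\Scarf(I+J) = \Scarf(I) * \Scarf(J)$ as sets of faces; since both sides are visibly simplicial complexes (subcomplexes of $\Taylor(I+J) = \Taylor(I) * \Taylor(J)$), the equality holds as simplicial complexes.

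One point I would be careful about is the bookkeeping around the empty face: the empty face of $\Taylor(I+J)$ corresponds to $(\emptyset, \emptyset)$, its label is $1$, and $1$ is a unique label in each of $\Taylor(I)$, $\Taylor(J)$, $\Taylor(I+J)$, so $\emptyset$ lies in all three Scarf complexes and the join formula is consistent at that end too. Another small thing to verify is that a face $\sigma$ of $\Taylor(I)$ is also literally a face of $\Taylor(I+J)$ with the same label (take $\tau = \emptyset$), so that the notion of "label unique in $\Taylor(I)$" matches "label unique in $\Taylor(I+J)$ among faces of the form $(\sigma', \emptyset)$" — but the claim above already handles the comparison against \emph{all} faces of $\Taylor(I+J)$, so no genuine difficulty arises. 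I do not anticipate a real obstacle here; the only thing to get right is the honest statement and proof of the factorization-of-labels claim, after which the rest is formal.
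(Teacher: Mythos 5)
Your proof is correct and takes essentially the same route as the paper's: both decompose a face of $\Taylor(I+J)$ as $\sigma\cup\tau$ with $\sigma\in\Taylor(I)$, $\tau\in\Taylor(J)$, factor the label as $\m_\sigma\m_\tau$ using disjointness of the variable sets, and reduce uniqueness of a label in $\Taylor(I+J)$ to uniqueness of each factor in $\Taylor(I)$ and $\Taylor(J)$. The only difference is cosmetic: you package the key step as a single label-uniqueness equivalence, whereas the paper verifies the two inclusions separately from its equation \eqref{e:disjoint}.
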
 

\begin{proof} Suppose $I$ is generated by monomials in the set of variables $X_1$, and $J$ is generated by monomials in the set of variables $X_2$, where $X_1\cap X_2=\emptyset$. 

If $\sigma \in \Taylor(I)$ and  $\tau \in \Taylor(J)$, then since  $X_1\cap X_2=\emptyset$, $\gcd(\m_\sigma, \m_\tau)=1$, and therefore 
$$\m_{\sigma \cup \tau}=
\lcm(\m_\sigma,\m_\tau)=
\m_\sigma\m_\tau.$$ 

On the other hand, if $\gamma \in \Taylor(I+J)$, then we can write $\gamma=\gamma_1\cup \gamma_2$, where the vertices of $\gamma_1$ are labeled with monomial generators of $I$ (and are hence monomials in $X_1$), and  the vertices of $\gamma_2$ are labeled with monomial generators of $J$ (and are hence monomials in $X_2$). In other words, $\gamma_1 \in \Taylor(I)$ and $\gamma_2\in \Taylor(J)$, and  $\m_\gamma=\m_{\gamma_1}\m_{\gamma_2}$. 

Now, with $\sigma$, $\tau$ and $\gamma$ as above, we have  
\begin{equation}\label{e:disjoint}
\m_\gamma=\m_{\sigma\cup \tau} \iff 
\m_{\gamma_1}\m_{\gamma_2}=\m_\sigma\m_\tau \iff 
\m_{\gamma_1}=\m_\sigma \qand \m_{\gamma_2}=\m_\tau,
\end{equation} 
where the last equalities hold because each pair of multiplied monomials belong to disjoint sets of variables.

To prove the statement of the lemma, observe that if $\sigma \in \Scarf (I)$ and $\tau \in \Scarf (J)$ then by \eqref{e:disjoint} $\sigma \cup \tau$ cannot share a monomial label with any other face of $\Taylor(I+J)$, so $\sigma \cup \tau \in \Scarf(I+J)$.

And conversely, if  $\gamma\in \Scarf(I+J)$, then by the discussion above $\gamma =\gamma_1 \cup \gamma_2$ where 
 $\gamma_1 \in \Taylor(I)$ and $\gamma_2\in \Taylor(J)$. If  $\m_{\gamma_1}=\m_{\sigma}$ for some $\sigma \in \Taylor(I)$, then $\m_{\sigma \cup \gamma_2}=\m_{\gamma_1\cup \gamma_2}$ which  implies that $\sigma\cup \gamma_2=\gamma_1 \cup \gamma_2$ and hence $\sigma=\gamma_1$. Therefore $\gamma_1\in \Scarf(I)$, and a similar argument shows that $\gamma_2 \in \Scarf(J)$, and therefore $\gamma \in \Scarf(I)*\Scarf(J)$. This ends our argument.
\end{proof}

\section{The Scarf complex of edge ideals of graphs}

Our focus in this paper is on the special class of monomial ideals
generated by squarefree monomials of degree~$2$, which are called
\textit{edge ideals}.

\begin{definition}[{\bf Edge Ideal of Graphs}] \label{d:edgeIdeal}
Let $G$ be a simple graph over the vertices $V(G) = \{x_1, \dots, x_n\}$ and with edge set $E(G)$. The {\bf edge ideal} of $G$ is the following square-free monomial ideal
$$I(G)=(x_ix_j \mid \{x_i,x_j\} \in E(G)) \subseteq
\sfk[x_1,\ldots,x_n].$$
\end{definition}
For simplicity of notation, we will use the convention
$$\Taylor(G)=\Taylor(I(G)) \qand
\Scarf(G)=\Scarf(I(G)).$$
Following standard notation, we use $C_n$ to denote a cycle on $n$
vertices and $P_n$ to denote a path of length $n$ on $n+1$ vertices. A
{\bf claw} is defined to be a graph with three edges meeting at a
common vertex. A graph is {\bf gap-free} if whenever $x_1x_2$ and $y_1y_2$ are edges in the same connected component of $G$ then for some choice of $i,j \in \{1,2\}$, $x_iy_j$ is an edge. That is, a connected graph $G$ is gap-free if and only if the {\it induced matching number} of $G$ is 1. We also set $N_G(v) = \{x \in V(G) \mid xv \in E(G)\}$
to denote the {\bf neighborhood} of $v$ in $G$.

\begin{example}\label{e:C4}
  If $G$ is the square $C_4$ with $I(G)=(xy,yz,zw,wx)$, then
  $\Taylor(G)$ is a tetrahedron with many repeated labels, for
  example $$\lcm(xy,yz,zw,wx)=\lcm(xy,yz,zw)=\lcm(xy,zw)=
  \lcm(yz,xw).$$ By removing all faces with repeated labels from the
  tetrahedron, we observe that $\Scarf(G)$ is a also a square, labeled
  as below.

  \begin{center}
    \begin{tabular}{ccc}
\begin{tikzpicture}
	\tikzstyle{point}=[inner sep=0pt]
	\node (a)[point, label=left:$x$] at (0,0) {};
	\node (b)[point, label=left:$y$] at (0,1.5) {};
	\node (c)[point, label=right:$z$] at (1.5,1.5) {};
	\node (d)[point, label=right:$w$] at (1.5,0) {};

	\draw (a.center) -- (b.center);
	\draw (b.center) -- (c.center);
	\draw (c.center) -- (d.center);
	\draw (d.center) -- (a.center);
	
	\filldraw [black] (a.center) circle (1pt);
	\filldraw [black] (b.center) circle (1pt);
	\filldraw [black] (c.center) circle (1pt);
	\filldraw [black] (d.center) circle (1pt);
\end{tikzpicture}
& \quad\quad &
\begin{tikzpicture}
	\tikzstyle{point}=[inner sep=0pt]
	\node (a)[point, label=left:$xy$] at (0,0) {};
	\node (b)[point, label=left:$yz$] at (0,1.5) {};
	\node (c)[point, label=right:$zw$] at (1.5,1.5) {};
	\node (d)[point, label=right:$xw$] at (1.5,0) {};

	\draw (a.center) -- (b.center);
	\draw (b.center) -- (c.center);
	\draw (c.center) -- (d.center);
	\draw (d.center) -- (a.center);
	
	\filldraw [black] (a.center) circle (1pt);
	\filldraw [black] (b.center) circle (1pt);
	\filldraw [black] (c.center) circle (1pt);
	\filldraw [black] (d.center) circle (1pt);

	\pgfputat{\pgfxy(-.5,0.8)}{\pgfbox[left,center]{\tiny{$xyz$}}}
	\pgfputat{\pgfxy(0.6,1.6)}{\pgfbox[left,center]{\tiny{$yzw$}}}
	\pgfputat{\pgfxy(1.6,0.8)}{\pgfbox[left,center]{\tiny{$xzw$}}}
	\pgfputat{\pgfxy(0.6,-.15)}{\pgfbox[left,center]{\tiny{$xyw$}}}
\end{tikzpicture}
\\
& \quad\quad &\\
$C_4$ & \quad\quad & $\Scarf(C_4)$\\
    \end{tabular}
  \end{center}
\end{example}

\begin{example}\label{ex:disjoint} 
If $I=(xy, yz, uv)$ is the edge ideal of a disconnected graph $G$ on the left, then $\Taylor(I)$ and $\Scarf(I)$ coincide as the simplex on the right. Another way to see this is by \cref{l:join}, which tells us that $\Scarf(I)$ is the join of 
$\Scarf((xy,yz))$ (an edge) and $\Scarf((uv))$ (a point).
$$
\begin{array}{ccc}
\begin{tikzpicture}
	\tikzstyle{point}=[inner sep=0pt]
	\node (a)[point, label=left:$x$] at (1,1) {};
	\node (b)[point, label=above:$y$] at (2,2) {};
	\node (c)[point, label=right:$z$] at (3,1) {};
    \node (d)[point, label=right:$u$] at (4,1) {};
    \node (e)[point, label=right:$v$] at (4,2) {};

	\draw (a.center) -- (b.center);
    \draw (b.center) -- (c.center);
 	\draw (d.center) -- (e.center);
  
	\filldraw [black] (a.center) circle (1pt);
	\filldraw [black] (b.center) circle (1pt);
	\filldraw [black] (c.center) circle (1pt);
    \filldraw [black] (d.center) circle (1pt);
    \filldraw [black] (e.center) circle (1pt);
	
\end{tikzpicture}& \quad \quad \quad & 
\begin{tikzpicture}
	\tikzstyle{point}=[inner sep=0pt]
	\node (a)[point, label=left:$yz$] at (1,1) {};
	\node (b)[point, label=above:$xy$] at (2.5,2.5) {};
	\node (c)[point, label=right:$uv$] at (4,1) {};

	\draw (a.center) -- (b.center);
	\draw (a.center) -- (c.center);
    \draw (b.center) -- (c.center);
	
	\filldraw [black] (a.center) circle (1pt);
	\filldraw [black] (b.center) circle (1pt);
	\filldraw [black] (c.center) circle (1pt);
	\draw  [fill=gray!20] (a.center) -- (b.center) -- (c.center) -- cycle;
 
	\pgfputat{\pgfxy(2,0.8)}{\pgfbox[left,center]{\tiny{$ yzuv$}}}
	\pgfputat{\pgfxy(1.1,2)}{\pgfbox[left,center]{\tiny{$xyz$}}}
    \pgfputat{\pgfxy(3.1,2)}{\pgfbox[left,center]{\tiny{$xyuv$}}}
    \pgfputat{\pgfxy(2.05,1.5)}{\pgfbox[left,center]{\tiny{$xyzuv$}}}    
\end{tikzpicture}\\ 
&&\\
G && \Scarf(I)=\Taylor(I)
\end{array}
$$
\end{example}

For a face $\sigma \in \Taylor(I)$ and an edge $e$ of $G$ (or minimal
generator $e$ of $I$), we write $e \in \sigma$ if $e$ is among the
vertices appearing in $\sigma$.  We shall often make use of the notion
of distances between edges of graphs and labels in Taylor complexes. To do so, we define a \textit{path} from an edge $e$ to an edge $e'$ as a path connecting a vertex in $e$ to a vertex in $e'$.

\begin{definition}[{\bf Distance}] \label{d:distance}
Let $G$ be a graph, let $e, e' \in E(G)$, and let $\sigma\in \Taylor(G)$. Then the {\bf distance} between $e$ and $e'$, and
  between $\sigma$ and $e'$ are defined, respectively, as
  $$\dist_G(e,e')=\min \{\mbox{number of edges of a path in $G$ connecting $e$ to $e'$}\},$$ and
  $$\dist_G(\sigma,e')=\min\{\dist_G(e,e') \st e \in \sigma\}.$$
\end{definition}
These definitions of distances can be naturally extended to the case where $e'=\{v_1,v_2\}$, for some $v_1, v_2 \in V(G)$, is not necessarily an edge in $G$, by considering $G \cup \{e'\}$ in place of $G$ in \cref{d:distance}.

\begin{example}
    If $G$ is the graph $C_4$ in \cref{e:C4}, $e$ and $e'$ are the edges
  $xy$ and $zw$, respectively, and $\sigma$ is the edge (face) labeled
  $yzw$ in $\Scarf(C4)$, then
  $$\dist_G(e, e') = 1  \qand \dist_G(\sigma, e') = 0.$$
\end{example}

\section{The Scarf complex of a subgraph}\label{s:subgraph}  

We next investigate Scarf complexes of subgraphs; particularly, we
examine how the Scarf complex changes when removing an edge. Let $G$
be a graph, let $vw$ be an edge in $G$, and let $G'$ be the graph
obtained by removing $vw$ from $G$. That is,
\begin{equation}\label{e:notation}
G=G'\cup\{vw\} \qand I(G)=I(G')+(vw).
\end{equation}

Our main result in this section characterizes faces $\sigma \in
\Scarf(G')$ for which $\sigma \cup \{vw\} \in \Scarf(G)$. This is
achieved by combining the following two lemmas.

\begin{lemma}\label{l:B23}
  Let $G$ be a graph and let $vw \in E(G)$. Let $G'$ be the subgraph
  of $G$ obtained by removing the edge $vw$ as in \eqref{e:notation}.
  If $\sigma \in \Scarf(G')$ and $\sigma \cup \{vw\} \in \Scarf(G)$,
  then $\dist_G(e,vw) \neq 1$ for every edge $e \in \sigma$.
\end{lemma}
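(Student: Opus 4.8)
The plan is to argue by contradiction. Suppose $\sigma \in \Scarf(G')$, $\sigma \cup \{vw\} \in \Scarf(G)$, and yet there exists an edge $e \in \sigma$ with $\dist_G(e, vw) = 1$. Distance $1$ means $e$ and $vw$ share a common vertex; without loss of generality say $v \in e$, so $e = \{v, u\}$ for some vertex $u$ (and $u \neq w$, since $e \neq vw$). The monomial labels are then $\m_e = x_v x_u$ and $\m_{vw} = x_v x_w$, so $\lcm(\m_e, \m_{vw}) = x_v x_u x_w$, which contains no more variables than already appear among the labels of $\sigma$ together with $vw$: indeed $\m_{e \cup \{vw\}}$ divides $\m_\sigma$ only if $x_w \mid \m_\sigma$, but in any case the key point is that adding $vw$ to $\sigma$ does not introduce the variable $x_w$ in an "essential" way if $w$ already appears, and if $w$ does not appear we exploit the shared variable $x_v$.

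The cleaner way to extract the contradiction: consider the face $\tau = \sigma \cup \{vw\} \in \Scarf(G)$ and compare $\m_\tau$ with $\m_{\sigma \setminus e \cup \{vw\}}$ and $\m_\sigma$. Since $\m_e = x_v x_u$ divides $\lcm(\m_{vw}, x_v x_u) $ and $x_v \mid \m_{vw}$, we get $\m_e \mid \lcm(\m_{vw}, \m_{\sigma \setminus \{e\}})$ provided $x_u$ appears among the other vertex labels — which may fail. So instead the right comparison is between $\tau = \sigma \cup \{vw\}$ and the subface $\tau \setminus \{e\} = (\sigma \setminus \{e\}) \cup \{vw\}$: one checks that $\m_\tau = \m_{\tau \setminus \{e\}}$, because the only variables $\m_e$ contributes are $x_v$ (already contributed by $vw$) and $x_u$; if $x_u$ is not otherwise present then $\m_\tau \neq \m_{\tau \setminus \{e\}}$ and this approach needs refinement. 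The genuinely robust argument is: since $vw \notin \sigma$ but $\dist_G(e,vw)=1$ with $v$ the shared vertex, the face $\sigma' = (\sigma \setminus \{e\}) \cup \{vw\}$ lies in $\Taylor(G)$ and has $\m_{\sigma'} = \lcm\bigl(\m_{vw}, \m_{\sigma \setminus\{e\}}\bigr)$; then $\m_{\sigma' \cup \{e\}} = \lcm(\m_{\sigma'}, x_u x_v) = \m_{\sigma'} \cdot (\text{possibly } x_u)$. The contradiction with membership in $\Scarf(G)$ will come from producing two distinct faces of $\Taylor(G)$ carrying the label $\m_\tau$.

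The main obstacle is exactly this bookkeeping: the variable $x_u$ (the "far" endpoint of $e$) might or might not already be divided into $\m_\sigma$, and the argument must handle both cases. I expect the intended route is to set $\rho = \sigma \cup \{vw\} \setminus \{e\}$ if $x_u \mid \m_\sigma$ in which case $\m_\rho = \m_\tau$ gives the repeated label immediately (contradicting $\tau \in \Scarf(G)$), and otherwise to instead compare $\tau$ with $\sigma \cup \{vw\}$ against a face obtained by swapping which of $e, vw$ is retained — using that $x_v x_w \mid \lcm(\m_\sigma, x_v x_w)$ means $\{vw\}$ is "redundant" relative to $e$ in a symmetric sense only when $x_w \mid \m_\sigma$. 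Careful case analysis on whether $x_u$ and $x_w$ divide $\m_{\sigma\setminus\{e\}}$ should close all cases. The heart of the matter, and the step to get right, is verifying that in every case one obtains two distinct Taylor faces with the same multidegree, violating $\sigma \cup \{vw\} \in \Scarf(G)$.
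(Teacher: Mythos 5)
There is a genuine gap, and it occurs at the very first step: you have misread the paper's distance convention. By \cref{d:distance}, the distance between two edges is the minimum number of edges in a path of $G$ joining them, so two edges that share a vertex are at distance $0$, not $1$; the hypothesis $\dist_G(e,vw)=1$ means that $e$ and $vw$ are vertex-disjoint but some edge of $G$ joins an endpoint of $e$ to an endpoint of $vw$ (this is exactly the $C_4$ example following \cref{d:distance}, where $\dist_G(xy,zw)=1$). Your argument instead assumes $e=\{v,u\}$ shares the vertex $v$ with $vw$, which is the distance-$0$ situation, and the statement you are then trying to prove is false: take $G$ to be the path with edges $uv$ and $vw$ and $G'=G\setminus\{vw\}$; then $\sigma=\{uv\}\in\Scarf(G')$ and $\sigma\cup\{vw\}\in\Scarf(G)$, since the Taylor labels $uv$, $vw$, $uvw$ are pairwise distinct, even though $e=uv$ meets $vw$. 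This is why your bookkeeping on whether $x_u$ or $x_w$ divides $\m_\sigma$ keeps hitting cases that ``need refinement'': in the shared-vertex case there simply need not exist a second Taylor face with the label $\m_{\sigma\cup\{vw\}}$, so no case analysis can close the argument. (The distance-$0$ situation is governed instead by \cref{lem.Distance0B25}, part (2), which gives precise conditions under which $\sigma\cup\{vw\}$ remains Scarf.)

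With the correct reading, the proof is short and is the one the paper gives. Write $e=xy\in\sigma$ with $x,y\notin\{v,w\}$; since $\dist_G(e,vw)=1$, one of $xv,xw,yv,yw$ is an edge of $G$, say $xv\in E(G)$. Because $x\mid\m_\sigma$ (as $xy\in\sigma$) and $v\mid\m_{vw}$, the generator $xv$ contributes nothing new to $\lcm(\m_\sigma,vw)$, so toggling it produces a repeated label: setting $\tau=\sigma\cup\{xv\}$ if $xv\notin\sigma$ and $\tau=\sigma\setminus\{xv\}$ otherwise, one gets $\tau\neq\sigma$ and $\m_{\tau\cup\{vw\}}=\m_{\sigma\cup\{vw\}}$, contradicting $\sigma\cup\{vw\}\in\Scarf(G)$. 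The face you need to exhibit uses the bridging edge $xv$ as a third generator, not a swap between $e$ and $vw$ as in your proposal.
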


\begin{proof} Suppose that $e=xy \in \sigma$ and $\dist_G(e,vw) = 1$.
  Then, at least one of $xv,xw,yv,yw$ is an edge of $G$. Without loss
  of generality, assume that $xv\in E(G)$. Set $\tau = \sigma \cup
  \{xv\}$ if $xv \not\in \sigma$, and set $\tau = \sigma \setminus
  \{xv\}$ otherwise. Then, $\sigma \neq \tau$ and 
  $\m_{\sigma\cup \{vw\}}=\m_{\tau \cup \{vw\}}$, and so $\sigma\cup
  \{vw\}\not\in \Scarf(G)$. This is a contradiction. Thus $\dist_G(e,vw) \ne 1$.
\end{proof}

The next lemma provides conditions under which the converse holds. To
state the conditions, we define a {\bf relative neighborhood} for a face $\sigma$ 
of the Taylor complex of $G$ to be 
$$N_{\sigma}(v) = \{ x \in V(G) \mid xv\in E(G) \cap \sigma\}.$$

\begin{lemma}\label{lem.Distance0B25} 
  Let $G$ be a graph, and let $G'$ be its subgraph obtained by removing
  the edge $vw$ as in \eqref{e:notation}. Assume $\sigma \in
  \Scarf(G')$, and $\dist_G(e, vw) \neq 1$ for all $e \in \sigma$.

  \begin{enumerate}
	\item If $\dist_G(\sigma,vw) \geq 2$, then $\sigma \cup \{vw\}
          \in \Scarf(G)$.

	\item If $\dist_G(\sigma, vw) = 0$, then $\sigma \cup \{vw\}
          \in \Scarf(G)$ if and only if 

          \begin{itemize}

\item[(i)] $ N_\sigma(w) = \varnothing$ or $N_\sigma(v) = \varnothing$, and

\item[(ii)] $N_\sigma(w) \cap N_G(v) = \varnothing = N_\sigma(v) \cap
  N_G(w).$
          \end{itemize}
	\end{enumerate}
\end{lemma}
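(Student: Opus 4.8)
The plan is to unwind both cases via the defining property of the Scarf complex: $\sigma\cup\{vw\}\in\Scarf(G)$ precisely when $\m_{\sigma\cup\{vw\}}$ is the label of a unique face of $\Taylor(G)$. Throughout I would use the decomposition $\Taylor(G)=\Taylor(G')\cup\{\,\rho\cup\{vw\}\st \rho\in\Taylor(G')\,\}$ and the elementary fact that $\m_{\rho\cup\{vw\}}=\lcm(\m_\rho,vw)$. The hypothesis $\dist_G(e,vw)\ne 1$ for all $e\in\sigma$ (via Lemma~\ref{l:B23}, which is being proved necessary) will be used to control which variables $\m_\sigma$ already contains near the edge $vw$.

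\emph{Case (1): $\dist_G(\sigma,vw)\ge 2$.} Here no edge of $\sigma$ shares a vertex with $vw$, so $v,w\notin\Supp(\m_\sigma)$; hence $\m_{\sigma\cup\{vw\}}=\m_\sigma\cdot vw$ introduces both new variables $v$ and $w$. Suppose some other face $\tau\in\Taylor(G)$ has $\m_\tau=\m_{\sigma\cup\{vw\}}$. Since $v,w\mid\m_\tau$ and the only generator of $I(G')$ that could contribute $v$ together with $w$ in a single generator is $vw$ itself — wait, more carefully: any generator of $I(G)$ dividing $\m_\tau$ and involving $v$ must be $vw$ (as $v$'s only neighbor whose product divides $\m_\sigma\cdot vw$ and introduces no variable outside $\Supp(\m_\sigma)\cup\{v,w\}$ would have to be $w$, using $\dist_G(\sigma,vw)\ge2$ to rule out neighbors of $v$ lying in $\Supp(\m_\sigma)$... this needs the distance hypothesis carefully). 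So I would argue $vw\in\tau$, write $\tau=\tau'\cup\{vw\}$ with $\tau'\in\Taylor(G')$ and $\m_{\tau'}\mid\m_\sigma$, then in fact $\m_{\tau'}=\m_\sigma$ since $v,w\notin\Supp(\m_{\tau'})$ forces $\lcm(\m_{\tau'},vw)=\m_\sigma\cdot vw$ to give $\m_{\tau'}=\m_\sigma$; uniqueness in $\Scarf(G')$ then yields $\tau'=\sigma$, so $\tau=\sigma\cup\{vw\}$.

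\emph{Case (2): $\dist_G(\sigma,vw)=0$.} Now some edge of $\sigma$ meets $\{v,w\}$, so at least one of $v,w$ lies in $\Supp(\m_\sigma)$. For the forward direction I would show that if (i) fails — both $N_\sigma(w)$ and $N_\sigma(v)$ are nonempty, say $xv,\,yw\in E(G)\cap\sigma$ — then $\m_{\sigma\cup\{vw\}}=\m_\sigma$ already (both $v,w\in\Supp(\m_\sigma)$), so $\sigma$ and $\sigma\cup\{vw\}$ are distinct faces of $\Taylor(G)$ with equal labels; and if (ii) fails, say $x\in N_\sigma(w)\cap N_G(v)$, then replacing/adding the edge $xv$ to $\sigma\cup\{vw\}$ produces a distinct face with the same label (the variables $x,v,w$ are all already present). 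Either way $\sigma\cup\{vw\}\notin\Scarf(G)$, contradiction. For the converse, assume (i) and (ii) and suppose $\m_\tau=\m_{\sigma\cup\{vw\}}$ for some $\tau\in\Taylor(G)$, $\tau\ne\sigma\cup\{vw\}$. Using (i), WLOG $N_\sigma(v)=\varnothing$, so $v\notin\Supp(\m_\sigma)$ and $\m_{\sigma\cup\{vw\}}=\m_\sigma\cdot v$ genuinely introduces $v$; the only generators of $I(G)$ involving $v$ and dividing $\m_\tau$ are $vw$ and those $vx$ with $x\in N_G(v)$ and $x\in\Supp(\m_\sigma)$, but condition (ii) together with $\dist_G(e,vw)\ne1$ rules the latter out, so $vw\in\tau$; then $\tau=\tau'\cup\{vw\}$, $\m_{\tau'}\mid\m_\sigma$, and (again since $v\notin\Supp(\m_{\tau'})$) $\m_{\tau'}=\m_\sigma$, so by uniqueness in $\Scarf(G')$, $\tau'=\sigma$ and $\tau=\sigma\cup\{vw\}$.

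\emph{Main obstacle.} The delicate point is the bookkeeping in ruling out the alternative face $\tau$: one must be certain that no generator other than $vw$ can supply the variable $v$ (or $w$) to $\m_\tau$ without also dragging in a variable outside $\Supp(\m_{\sigma\cup\{vw\}})$, and this is exactly where the hypotheses $\dist_G(e,vw)\ne1$ (no neighbor of $v$ or $w$ sits at the far end of an edge of $\sigma$) and condition (ii) (no neighbor of $v$, resp.\ $w$, already appears as a vertex of $\sigma$ at the right spot) are both consumed. I would isolate this as a short sublemma: if $v\notin\Supp(\m_\sigma)$, $\dist_G(e,vw)\ne1$ for all $e\in\sigma$, and $N_\sigma(w)\cap N_G(v)=\varnothing$, then any generator $g$ of $I(G)$ with $v\mid g$ and $g\mid\m_\sigma\cdot v$ satisfies $g=vw$. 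Granting that, both implications fall out cleanly from the uniqueness definition of $\Scarf$ and the already-established $\Scarf(G')$-membership of $\sigma$.
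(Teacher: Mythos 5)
Your plan follows the same route as the paper's proof: argue directly from uniqueness of labels in $\Taylor(G)$, use the hypothesis $\dist_G(e,vw)\neq 1$ together with (ii) to show that the only generator of $I(G)$ able to contribute the variable $v$ (resp.\ $w$) to a face labelled $\lcm(\m_\sigma,vw)$ is $vw$ itself, force $vw\in\tau$, and then reduce to the Scarf-ness of $\sigma$ in $G'$. Your case (1) and the forward implication of case (2) (adding or deleting the edge $xv$ when (i) or (ii) fails) are sound and match the paper's argument.

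The one step asserted too quickly is the end of the converse in case (2). Having forced $vw\in\tau$ and written $\tau=\tau'\cup\{vw\}$ with $v\nmid\m_{\tau'}$, you conclude $\m_{\tau'}=\m_\sigma$ ``again since $v\notin\Supp(\m_{\tau'})$.'' Unlike case (1), here $w$ already divides $\m_\sigma$ (because $N_\sigma(w)\neq\varnothing$), so $\lcm(\m_{\tau'},vw)=v\m_\sigma$ with $v\nmid\m_{\tau'}$ only yields $\lcm(\m_{\tau'},w)=\m_\sigma$, i.e.\ $\m_{\tau'}$ is either $\m_\sigma$ or $\m_\sigma/w$; your proposed sublemma controls only which generators can supply $v$ and says nothing about whether $w$ must divide $\m_{\tau'}$. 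The possibility $\m_{\tau'}=\m_\sigma/w$ needs a separate (short) argument: pick $a\in N_\sigma(w)$; then $a\mid\m_{\tau'}$, so some edge $ab\in\tau'$ has $b\notin\{v,w\}$ and $b\mid\m_\sigma$. If $ab\notin\sigma$, then $\m_{\sigma\cup\{ab\}}=\m_\sigma$ contradicts $\sigma\in\Scarf(G')$; if $ab\in\sigma$, then the edge $aw\in E(G)$ gives $\dist_G(ab,vw)=1$, contradicting the distance hypothesis. With this patch your argument is complete; note that the published proof is itself terse at exactly this spot (it only treats the situation where $\tau$ contains an edge $yv$ with $y\neq w$), but as written your claim that, granting the sublemma, ``both implications fall out cleanly'' overstates what the sublemma delivers.
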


\begin{proof} By definition,
  $\sigma \cup \{vw\} \not\in \Scarf(G)$ if and only if there exists
  $\theta \in \Taylor(G)$ such that $$\theta \neq \sigma \cup \{vw\}
  \qand \lcm(\m_\sigma, vw) = \m_\theta.$$

\noindent (1) Assume that $\lcm(\m_\sigma, vw) = \m_\theta$ for some $\theta \in
\Taylor(G)$. Clearly $v,w \mid \m_{\theta}$.
\begin{itemize}

\item If $vw \not\in \theta$, then since $w \mid \m_\theta$, there
   exists a vertex $a\neq v$ such that $aw \in \theta$, so $a \mid
   \m_{\sigma}$, which means that there is an edge $ab$ of $G$ that is
   in $\sigma$. This implies that we have
   $\dist_G(\sigma,vw) \leq 1$, a contradiction.

\item If $vw \in \theta$,
  then define $\theta' = \theta \setminus \{vw\}$ and note that
  $\lcm(\m_\sigma, vw) = \lcm(\m_{\theta'}, vw).$

  \begin{itemize}
    
  \item If $w \mid \m_{\theta'}$, then since $vw \not\in \theta'$, there
  exists a vertex $a\neq v$ such that $aw \in \theta'$. Then, $a \mid
  \lcm(\m_{\theta'}, vw)$. It follows that $a \mid \m_{\sigma}$, and
  in particular there is an edge $ab$ of $G$ that is in $\sigma$. We
  now have $\dist_G(\sigma,vw) \leq 1$, a contradiction.

  \item If $v \mid \m_{\theta'}$, then we similarly get a contradiction.
  \end{itemize}
  
  Therefore $w,v \nmid \m_{\theta'}$ and $\dist_G(\sigma,vw) \geq 2 $,
  so we have $\m_\sigma = \m_{\theta'}$. Since $\sigma \in
  \Scarf(G')$, $\sigma = \theta'$ and thus $\theta=\sigma \cup
  \{vw\}$, and we are done.

\end{itemize}

\smallskip

\noindent (2) The condition $\dist_G(\sigma, vw) = 0$ is equivalent to either $N_\sigma(w) \neq \varnothing$, $N_\sigma(v) \neq \varnothing$, or both sets are not empty.

\begin{itemize}
\item[($\Longrightarrow$)] Assume $\sigma \cup \{vw\} \in
  \Scarf(G)$. If there are $a \in N_\sigma(v)$ and $b \in
  N_\sigma(w)$, then both $av$ and $bw$ are edges in $\sigma$,
  implying that $vw \mid \m_\sigma$, making $\m_\sigma=\m_{\sigma
    \cup\{vw\}}$, a contradiction to $\sigma \cup \{vw\} \in
  \Scarf(G)$. Thus, we must have either
  \begin{equation}\label{e:dist0}
  N_\sigma(w) \neq \varnothing \text{ and } N_\sigma(v) = \varnothing, \qor N_\sigma(v) \neq \varnothing \text{ and } N_\sigma(w) = \varnothing.
  \end{equation}
  Particularly, (i) holds. In view of \eqref{e:dist0}, we can
  assume, without loss of generality, that
\begin{equation}\label{e:dist-i}
  N_\sigma(w) \neq \varnothing \qand 
  N_\sigma(v) = \varnothing.
\end{equation}
  To show (ii), assume that $a \in
  N_\sigma(w) \cap N_G(v)$ (clearly, $N_\sigma(v) \cap N_G(w) = \varnothing$). Then, both $av$ and $aw$ are edges of $G$, where $aw \in \sigma$ and, by \eqref{e:dist-i}, $av \notin
  \sigma$. This implies that $av\neq wv$ and $\m_{\sigma \cup \{vw\}}=\m_{\sigma \cup
    \{av\}}$, a contradiction to the fact that $\sigma \cup \{vw\} \in \Scarf(G)$.
  Therefore (ii) also holds and we are done.
 
\item[($\Longleftarrow$)] Suppose that conditions (i) and (ii) hold. Without loss of generality, we may assume that $N_\sigma(v) = \varnothing$. This, together with $\dist_G(\sigma, vw) = 0$, forces $N_\sigma(w) \neq \varnothing$. Therefore, we have
  \begin{equation}\label{e:dist0-0}
    N_\sigma(w) \neq \varnothing, \quad N_\sigma(v) = \varnothing, \qand N_\sigma(w)
  \cap N_G(v) = \varnothing.
  \end{equation}
  We will show $\sigma \cup \{vw \} \in \Scarf(G)$.
  Suppose $\lcm(\m_\sigma, vw) = \m_\theta$ for some $\theta \in
   \Taylor(G)$. Then by \eqref{e:dist0-0}, $\m_\theta=v\m_\sigma$, and
   in particular, there is an edge $yv \in \theta$. Since $N_\sigma(w)
   \cap N_G(v) = \varnothing$, $yw \notin \sigma$, so $yz \in \sigma$
   for some $z \notin\{v,w\}$, which implies that $\dist_G(vw,yz)=1$, a
   contradiction.
    \end{itemize}
\end{proof}

Combining the preceding two lemmas allows us to classify all faces of $\Scarf(G)$ that contain a fixed edge $vw$ in terms of Scarf faces of a smaller graph.

\begin{theorem}[{\bf Removing an Edge}]\label{p:B26} 
	Let $G$ be a graph and let $vw$ be an edge in $G$. Set $G' = G \setminus \{vw\}$. Let $\sigma \in \Scarf(G')$. Then,
	$ \sigma \cup \{vw\} \in \Scarf(G)$ if and only if
        $\dist_G(e,vw) \neq 1$ for all $e \in \sigma$ and one of the following condition holds:
        \begin{enumerate} 
        \item $\dist_G(\sigma, vw) \ge 2$, or
        \item $\dist_G(\sigma, vw) = 0$ and
            \begin{enumerate} 
            \item $N_\sigma(w) = \varnothing$ or $N_\sigma(v) = \varnothing$, and
            \item $N_\sigma(w) \cap N_G(v) = N_\sigma(v) \cap N_G(w) =\varnothing.$
            \end{enumerate}
        \end{enumerate}
\end{theorem}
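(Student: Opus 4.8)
The plan is to obtain \cref{p:B26} as an essentially formal consequence of \cref{l:B23} and \cref{lem.Distance0B25}, since together these two lemmas already cover every case of the distance between $\sigma$ and $vw$. First I would observe the trichotomy: for any face $\sigma \in \Taylor(G')$, exactly one of $\dist_G(\sigma, vw) = 0$, $\dist_G(\sigma, vw) = 1$, or $\dist_G(\sigma, vw) \ge 2$ holds. The proof then splits according to which of these three alternatives occurs, and in each case we must verify that ``$\sigma \cup \{vw\} \in \Scarf(G)$'' is equivalent to the condition asserted in the theorem.

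For the forward direction, suppose $\sigma \cup \{vw\} \in \Scarf(G)$ with $\sigma \in \Scarf(G')$. By \cref{l:B23} we immediately get $\dist_G(e, vw) \neq 1$ for all $e \in \sigma$; in particular $\dist_G(\sigma, vw) \neq 1$, so $\dist_G(\sigma, vw) \in \{0\} \cup \{n : n \ge 2\}$. If $\dist_G(\sigma, vw) \ge 2$ we are in case (1) and there is nothing more to check. If $\dist_G(\sigma, vw) = 0$, then the hypotheses of \cref{lem.Distance0B25}(2) are met (we have $\sigma \in \Scarf(G')$ and $\dist_G(e,vw) \neq 1$ for all $e \in \sigma$ from \cref{l:B23}), and the ``only if'' half of that lemma yields conditions (i) and (ii), which are exactly (2)(a) and (2)(b). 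For the converse direction, suppose $\sigma \in \Scarf(G')$ satisfies $\dist_G(e, vw) \neq 1$ for all $e \in \sigma$ together with (1) or (2). If (1) holds, apply \cref{lem.Distance0B25}(1) directly. If (2) holds, apply the ``if'' half of \cref{lem.Distance0B25}(2). In both cases we conclude $\sigma \cup \{vw\} \in \Scarf(G)$.

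The one subtlety worth spelling out is why the condition ``$\dist_G(e,vw) \neq 1$ for all $e \in \sigma$'' can be listed \emph{separately} from the dichotomy (1)/(2), rather than being absorbed into it: when $\dist_G(\sigma, vw) = 0$ there may still be some edge $e \in \sigma$ that is at distance $1$ (not $0$) from $vw$, and such a $\sigma$ must be excluded even though it falls under case (2)'s distance hypothesis. So the statement is genuinely a conjunction of the distance-$1$-exclusion with the case analysis, and the proof should make clear that \cref{lem.Distance0B25} is only invoked \emph{after} that exclusion has been imposed — which is legitimate precisely because \cref{lem.Distance0B25} itself assumes $\dist_G(e,vw) \neq 1$ for all $e \in \sigma$.

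I do not anticipate a serious obstacle here; the content has been front-loaded into the two lemmas, and \cref{p:B26} is a packaging statement. The only place requiring mild care is bookkeeping the logical structure — making sure that in the ``$\Longleftarrow$'' direction one does not accidentally need information about the distance-$1$ edges beyond what the hypothesis already forbids, and that the three distance cases are exhaustive. A clean way to present it is: ``By \cref{l:B23}, the condition $\dist_G(e,vw)\neq 1$ for all $e\in\sigma$ is necessary; assume it henceforth. Then $\dist_G(\sigma,vw)$ is either $0$ or $\ge 2$, and the claim follows by applying \cref{lem.Distance0B25}(1) in the latter case and \cref{lem.Distance0B25}(2) in the former.''
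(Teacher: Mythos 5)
Your proposal is correct and is exactly the paper's argument: the paper proves \cref{p:B26} in one line by citing \cref{l:B23} for the necessity of the distance-$1$ exclusion and \cref{lem.Distance0B25} for the two remaining distance cases, which is precisely the case analysis you describe (your added remark about why the exclusion must be imposed before invoking \cref{lem.Distance0B25} is just the bookkeeping the paper leaves implicit).
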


\begin{proof} The assertion follows from \cref{l:B23} and
  \cref{lem.Distance0B25}.
\end{proof}

\cref{p:B26} does not quite give a recursive method to construct the
Scarf complex of an arbitrary graph, as $\Scarf(G')$ is not
necessarily a subcomplex of $\Scarf(G)$. The example below is one of
such a case.

\begin{example}\label{e:B26}
Consider $G'$ and $G$ below.
  $$
   G':	\begin{tikzpicture}
		\tikzstyle{point}=[inner sep=0pt]
		\node (w)[point,label=above:$w$] at (0,0) {};
		\node (a)[point,label=above:$a$] at (1,1) {};
		\node (v)[point,label=above:$v$] at (2,0) {};
		\draw (a.center) -- (w.center);
		\draw (a.center) -- (v.center);
		\filldraw [black] (a.center) circle (1pt);
		\filldraw [black] (v.center) circle (1pt);
		\filldraw [black] (w.center) circle (1pt);
	\end{tikzpicture}
   \qand
   G:	\begin{tikzpicture}
		\tikzstyle{point}=[inner sep=0pt]
		\node (w)[point,label=above:$w$] at (0,0) {};
		\node (a)[point,label=above:$a$] at (1,1) {};
		\node (v)[point,label=above:$v$] at (2,0) {};
		\draw (a.center) -- (w.center);
		\draw (a.center) -- (v.center);
		\draw (v.center) -- (w.center);
		\filldraw [black] (a.center) circle (1pt);
		\filldraw [black] (v.center) circle (1pt);
		\filldraw [black] (w.center) circle (1pt);
	\end{tikzpicture}
$$ Then $\sigma = \{wa, av\} \in \Scarf(G')$, but $\sigma \not\in
   \Scarf(G)$.
\end{example}

The next section focuses on induced subgraphs. Using induced subgraphs will allow us to build the Scarf complex inductively.

\section{The Scarf complex of an induced subgraph} \label{s:induced}

In this section, we study Scarf complexes of \textit{induced}
subgraphs which, unlike results in \cref{s:subgraph}, lead to a
recursive method to construct the Scarf complex of any given graph. Since powers of edge ideals are well-behaved with respect to induced subgraphs, we state the more general case of powers in the first two lemmas, which will prove useful in later sections.

\begin{lemma}\label{l:induced-gens} Let $t$ be a positive integer, $G$ a
  graph and $H$ an induced subgraph of $G$. Then the set of minimal
  monomial generators of $I(H)^t$ is contained in the minimal monomial
  generating set of $I(G)^t$.
\end{lemma}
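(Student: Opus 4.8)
The plan is to compare minimal monomial generating sets directly, using the fact that $H$ being an \emph{induced} subgraph means $E(H) = \{\, e \in E(G) : e \subseteq V(H)\,\}$. First I would recall that the minimal monomial generators of $I(G)^t$ are precisely the monomials of the form $m = e_1 e_2 \cdots e_t$ where $e_1, \dots, e_t$ are edges of $G$ (with repetition allowed), \emph{after} discarding those that are divisible by another such product; equivalently, $\Gens(I(G)^t)$ is the set of minimal elements, under divisibility, of $\{e_1 \cdots e_t : e_i \in E(G)\}$. The same description holds for $H$ with $E(H)$ in place of $E(G)$.

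So fix a minimal generator $m$ of $I(H)^t$ and write $m = f_1 \cdots f_t$ with each $f_i \in E(H) \subseteq E(G)$. Since each $f_i$ is also an edge of $G$, the monomial $m$ certainly lies in $I(G)^t$; the only thing to check is that $m$ is \emph{minimal} there, i.e. that no other product $e_1 \cdots e_t$ of edges of $G$ strictly divides $m$. Suppose $g_1 \cdots g_t \mid m$ with $g_i \in E(G)$. Every variable dividing $g_1 \cdots g_t$ divides $m$, hence lies in the support of $m$, which is contained in $V(H)$; therefore each edge $g_i$ has both endpoints in $V(H)$, and since $H$ is induced, $g_i \in E(H)$. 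Thus $g_1 \cdots g_t$ is a product of $t$ edges of $H$ dividing $m$, and by minimality of $m$ in $I(H)^t$ it cannot strictly divide $m$. Hence $m$ is a minimal generator of $I(G)^t$, which is exactly the claim.

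There is essentially no main obstacle here — the argument is a one-line observation dressed up — but the one point requiring a little care is the passage from "a variable of $g_i$ lies in $\Supp(m) \subseteq V(H)$" to "$g_i \in E(H)$": this is where the hypothesis that $H$ is \emph{induced} (not merely a subgraph) is used, and it would fail for a non-induced subgraph. I would make sure to state that step explicitly. The case $t = 1$ is the familiar statement that $I(H) \subseteq I(G)$ with $I(H)$ generated by a subset of the generators of $I(G)$, and the general $t$ is just its multiplicative consequence.
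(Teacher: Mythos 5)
Your proposal is correct and follows essentially the same route as the paper's proof: given a minimal generator $m$ of $I(H)^t$, any product of $t$ edges of $G$ dividing $m$ can only involve edges whose vertices lie in the support of $m$, hence (by inducedness) edges of $H$, and minimality in $I(H)^t$ finishes the argument. The paper phrases this with exponent vectors on the generators $m_i$ of $I(H)$ and the extra generators $u_j$ of $I(G)$ and rules out any $u_j$ appearing, which is exactly your observation that a variable outside $V(H)$ cannot divide $m$.
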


\begin{proof} Suppose $I(H)$  and $I(G)$ have, respectively, minimal generators
  $$m_1,\ldots,m_q \qand m_1,\ldots,m_q,u_1,\ldots,u_p,$$ where each $m_i$ is an edge
  of $G$ both whose vertices are in $H$, and the monomials $u_i$ correspond to edges
  with at least one vertex outside the vertex set of $H$.
  
  The case $t=1$ is then straightforward. If $t>1$, suppose 
  $\m=m_1^{a_1}\cdots m_q^{a_q}$ is a a minimal
  monomial generator of $I(H)^t$, where $\sum_i a_i=t$ .  If $\m$ is
  not a minimal generator of $I(G)^t$, then $\m' \mid \m$ for some
  $$\m'=m_1^{b_1}\cdots m_q^{b_q} \cdot u_1^{c_1}\cdots u_q^{c_p} \qwhere
  \sum_i b_i + \sum_j c_j=t.$$ If $c_j >0$ for some $j$, then $u_j
  \mid \m$, which is impossible because $u_j$ is divisible by a variable which does not divide
  $\m$. So $c_1=\cdots=c_p=0$, which means that $\m' \in I(H)^t$ and
  $\m' \mid \m$. Given that $\m$ is a minimal generator for $I(H)^t$
  this means that $\m'=\m$, proving our claim.
\end{proof}

The following simple observation allows us to consider Scarf complexes
of induced subgraphs.

\begin{proposition}[{\bf Scarf complex of induced subgraphs}] \label{l:induced}
    Let $G$ be a graph, let $H$ be an induced subgraph of $G$ with no
    isolated vertices, and let $m_H$ be the product of the vertices of
    $H$.  Then for $t\geq 1$
  \begin{enumerate}
  \item  $\LCM((I(H)^t)\subseteq \LCM((I(G)^t)$;
  \item $(m_H)^t \in  \LCM((I(H)^t)$;
  \item $\Scarf(I(H)^t)=\Scarf(I(G)^t)_{(m_H)^t}$ is the induced subcomplex of $\Scarf(I(G)^t)$ on $(m_H)^t$.
  \end{enumerate}
\end{proposition}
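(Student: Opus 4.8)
The plan is to prove the three parts in order, since each builds on the previous one, and to reduce everything to the combinatorics of monomial generators already established in \cref{l:induced-gens}.

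For part (1), I would use the notation of \cref{l:induced-gens}: write the minimal generators of $I(G)^t$ as $\{m_1^{a_1}\cdots m_q^{a_q} : \sum a_i = t\} \cup \{\text{generators involving some } u_j\}$, where $m_1,\dots,m_q$ are the edges of $H$ and $u_1,\dots,u_p$ the remaining edges of $G$. An element of $\LCM(I(H)^t)$ is the lcm of some collection of degree-$t$ products of the $m_i$'s; each such product is, by \cref{l:induced-gens}, a minimal generator of $I(G)^t$, so the lcm lies in $\LCM(I(G)^t)$. This is immediate once the generating sets are identified.

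For part (2), note that since $H$ has no isolated vertices, every vertex of $H$ appears in at least one edge $m_i$. Pick for each vertex an incident edge; the lcm of these edges (each raised to the first power, with the rest padded by one more factor of some fixed edge to reach total degree $t$ — or more cleanly, take any $t$ of the $m_i$ with repetition so that collectively every variable of $H$ is covered) is divisible by every variable of $H$, hence equals a power of $m_H$ times possibly extra, but in fact by choosing the collection to be exactly an edge cover one sees $\lcm = m_H$ when $t$ is large enough and in general $(m_H)^t$ is the lcm of the $t$-fold products running over an edge cover of $H$. The cleanest argument: $(m_H)^t = \lcm\{ w^t : w \text{ a generator of } I(H)^t\}$ has support exactly $V(H)$, it is an lcm of minimal generators of $I(H)^t$, and hence lies in $\LCM(I(H)^t)$; one just needs to check no variable outside $V(H)$ enters and no variable of $V(H)$ is missed, the latter using the no-isolated-vertices hypothesis.

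For part (3), this is where the real content lies, and I expect it to be the main obstacle. The key claim is that $\Taylor(I(H)^t)$ equals the induced subcomplex $\Taylor(I(G)^t)_{(m_H)^t}$ — i.e.\ a minimal generator $w$ of $I(G)^t$ divides $(m_H)^t$ if and only if $w$ is a generator of $I(H)^t$. The ``if'' direction is clear; the ``only if'' direction uses that any generator of $I(G)^t$ involving some $u_j$ is divisible by a variable outside $V(H)$ (by \cref{l:induced-gens}'s proof), hence cannot divide $(m_H)^t$. Given this identification of Taylor complexes, I would then argue that a face $\sigma$ of $\Taylor(I(H)^t)$ lies in $\Scarf(I(H)^t)$ iff its label is unique among faces of $\Taylor(I(H)^t)$; but by the identification, faces of $\Taylor(I(H)^t)$ are precisely the faces of $\Taylor(I(G)^t)$ whose label divides $(m_H)^t$, and the label of such a face is $\le (m_H)^t$, so any face of $\Taylor(I(G)^t)$ sharing that label also has label dividing $(m_H)^t$ and hence lies in $\Taylor(I(H)^t)$. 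Therefore uniqueness of a label within $\Taylor(I(H)^t)$ is equivalent to uniqueness within $\Taylor(I(G)^t)$, which gives $\Scarf(I(H)^t) = \Scarf(I(G)^t)_{(m_H)^t}$ — using here that $\Scarf(J)_{\m}$ means the induced subcomplex on vertices whose labels divide $\m$, consistent with \eqref{e:induced}. The subtle point to get right is that ``induced subcomplex on $(m_H)^t$'' for the Scarf complex must be interpreted correctly and that a face of $\Scarf(I(G)^t)$ has label dividing $(m_H)^t$ precisely when all its vertices do, which holds since the label is the lcm of the vertex labels.
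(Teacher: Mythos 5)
Your proposal is correct and takes essentially the same route as the paper: part (1) via \cref{l:induced-gens}, part (2) by writing $(m_H)^t=\lcm(m_1^t,\dots,m_q^t)$ over the edges $m_i$ of $H$ (your phrase ``$w$ a generator of $I(H)^t$'' should read ``a generator of $I(H)$'', as your own next clause indicates), and part (3) via the key observation that any face of $\Taylor(I(G)^t)$ whose label divides $(m_H)^t$ involves only variables of $V(H)$ and hence, by inducedness, already lies in $\Taylor(I(H)^t)$. The only difference is organizational: you first identify $\Taylor(I(H)^t)=\Taylor(I(G)^t)_{(m_H)^t}$ and then transfer label-uniqueness, whereas the paper proves the two inclusions of (3) directly, but the substance is identical.
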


\begin{proof}   
  (1) If $M \in \LCM(I(H)^t)$ then there exist minimal generators
  $w_1, \ldots, w_a$ of $I(H)^t$ such that $M=\lcm(w_1, \ldots,
  w_a)$. Since $w_1, \ldots, w_a$ are also minimal generators of
  $I(G)^t$ by \cref{l:induced-gens}, we have $M \in \LCM(I(G)^t)$ as
  well.

  (2)  Suppose $I(H)$  has minimal generators
  $m_1,\ldots,m_q$. Then  
  $m_H=\lcm(m_1,\ldots,m_q)$ and $$(m_H)^t=\lcm \big ( (m_1)^t, \ldots,
  (m_q)^t \big ) \in \LCM(I(H)^t).$$

(3) Consider $\sigma \in \Scarf(I(H)^t)$ and any $\tau \in
  \Taylor(I(G)^t)$. If $\m_{\sigma}=\m_{\tau}$, then for all vertices $v
  \in V(G) \setminus V(H)$, $v \nmid \m_{\tau}$. It follows that $\tau
  \in \Taylor(I(H)^t)$. Since $\sigma \in \Scarf(I(H)^t)$, this
  implies that $\sigma = \tau$. Thus, $\sigma \in \Scarf(I(G)^t)$.  On
  the other hand, $\m_\sigma \mid (m_H)^t$, so $\sigma \in
  \Scarf(I(G)^t)_{(m_H)^t}$.

Now suppose $\sigma \in \Scarf(I(G)^t)_{(m_H)^t}$. Then every
vertex of $\sigma$ has labels which are monomials in $V(H)$, and since
$H$ is an induced subgraph of $G$, these labels belong to
$\LCM(I(H)^t)$, so $\sigma \in \Taylor(I(H)^t)$. Since $\m_\sigma$ is
unique in the $\LCM(I(G)^t) \supseteq \LCM(I(H)^t)$, we have $\sigma \in
\Scarf(I(H)^t)$.
\end{proof}

Let $G$ be a graph and let $v$ be a vertex in $G$,  and let $G\setminus \{v\} $ denote the (induced) subgraph of $G$ obtained by deleting $v$ (and all incident edges) from $G$. That is,
$$G\setminus \{v\} = G\big|_{V(G) \sm \{v\}}.$$

When removing a vertex $v$ from the graph $G$, all edges involving $v$ are removed, thus  building $\Scarf(G)$ from $\Scarf(G')$ will bear a similarity to results in the previous section, but will involve multiple edges containing $v$. Fix  $$\{w_1, \dots, w_t\} \subseteq N_G(v).$$
We begin by identifying faces $\sigma \in \Scarf(G')$ for which $\sigma \cup \{vw_1, \dots, vw_t\} \not\in \Scarf(G)$. To do so, we consider the following cases in three subsequent lemmas:
\begin{enumerate} 
\setlength{\itemindent}{.5in}
	\item[(\cref{lem.dist2B27})] $\dist_G(\sigma, vw_i) \ge 2$ for all $1 \le i \le t$; 
	\item[(\cref{lem.dist1B27})] $\dist_G(e, vw_i) = 1$ for some $e \in \sigma$ and $1 \le i \le t$;
	\item[(\cref{lem.dist0B27})] $\dist_G(\sigma, vw_i) = 0$ for some $1 \le i \le t$ and we are not in \cref{lem.dist1B27}.
\end{enumerate}

\begin{lemma}\label{lem.dist2B27}
Let $G$ be a graph, $v$ a vertex of $G$ and $G'=G \setminus \{v\}$. Suppose $\sigma \in \Scarf(G')$ and $w_1, \dots, w_t \in N_G(v)$. Suppose further that $\dist_G(\sigma, vw_i) \ge 2$ for all $1 \le i \le t$.
\begin{enumerate}
	\item If there exist $1 \le i \not= j \le t$ such that $w_iw_j \in E(G)$ then $$\sigma \cup \{vw_1, \dots, vw_t\} \not\in \Scarf(G' \cup \{vw_1, \dots, vw_t\}).$$
	\item If $w_iw_j \not\in E(G)$ for all $1 \le i\not= j \le t$ then $$\sigma \cup \{vw_1, \dots, vw_t\} \in \Scarf(G' \cup \{vw_1, \dots, vw_t\}).$$
 As a consequence, in this case, we also have $\sigma \cup \{vw_1, \dots, vw_t\} \in \Scarf(G)$.
\end{enumerate}
\end{lemma}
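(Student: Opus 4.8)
\textbf{Proof proposal for \cref{lem.dist2B27}.}

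The plan is to analyze when $\sigma \cup \{vw_1, \dots, vw_t\}$ fails to be a Scarf face of $H := G' \cup \{vw_1, \dots, vw_t\}$ by searching for a distinct face $\theta \in \Taylor(H)$ with the same monomial label. Write $\m := \lcm(\m_\sigma, vw_1, \dots, vw_t) = v^{?}\cdot\m_\sigma\cdot(\text{vertices }w_1,\dots,w_t)$; more precisely, since $\dist_G(\sigma, vw_i)\ge 2$ for every $i$, none of $v, w_1, \dots, w_t$ divides $\m_\sigma$, so $\m_\sigma$, the variable $v$, and the variables $w_1,\dots,w_t$ contribute \say{independently} to $\m$. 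For part (1), suppose $w_iw_j \in E(G)$ for some $i\ne j$. Then $w_iw_j$ is an edge of $H$ (it lies in $G' \subseteq H$ since neither endpoint is $v$), and replacing the two edges $vw_i, vw_j$ of $\sigma \cup \{vw_1,\dots,vw_t\}$ by the single edge $w_iw_j$ — i.e. setting $\theta = (\sigma \cup \{vw_1,\dots,vw_t\} \setminus \{vw_i,vw_j\}) \cup \{w_iw_j\}$ if $w_iw_j\notin\sigma$, and $\theta = (\sigma \cup \{vw_1,\dots,vw_t\}\setminus\{vw_i,vw_j,w_iw_j\})$ otherwise — produces a face with $v, w_i, w_j$ still dividing $\m_\theta$ (because $v w_k\in\theta$ for the other indices $k$, and $w_iw_j$ or the $w_i,w_j$ already present cover $w_i, w_j$ — this needs a small case check, see below) and hence $\m_\theta = \m$, while $\theta \ne \sigma \cup \{vw_1,\dots,vw_t\}$. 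This exhibits a repeated label, so $\sigma \cup \{vw_1,\dots,vw_t\} \notin \Scarf(H)$.

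For part (2), assume $w_iw_j \notin E(G)$ for all $i\ne j$, and suppose toward a contradiction that some $\theta\in\Taylor(H)$ with $\theta\ne\sigma\cup\{vw_1,\dots,vw_t\}$ satisfies $\m_\theta = \m$. Since $v \mid \m_\theta$ and the only edges of $H$ containing $v$ are $vw_1, \dots, vw_t$, at least one $vw_k$ lies in $\theta$. Split into whether $\{vw_1,\dots,vw_t\}\subseteq\theta$ or not. If some $vw_\ell\notin\theta$ but $w_\ell\mid\m_\theta$, then $\theta$ must contain an edge $e$ of $H$, other than the $vw$-edges, with $w_\ell$ as an endpoint; such an edge is an edge of $G'$, hence an edge of $G$. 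The other endpoint $a$ of $e$ satisfies $a\mid\m_\theta = \m$. Now $\m = v\cdot m_{w_1}\cdots m_{w_t}\cdot\m_\sigma$ (abusing notation for the contributions), and $a\ne v$; moreover $a\ne w_i$ for any $i$ because $w_\ell w_i\notin E(G)$ by hypothesis; so $a\mid\m_\sigma$, which forces an edge of $\sigma$ incident to $a$, giving $\dist_G(\sigma, w_\ell a)\le 1$ and thus $\dist_G(\sigma, vw_\ell)\le 2$ via the path $\sigma$–$a$–$w_\ell$–$v$ — but we need strictly the stronger conclusion, so I would instead argue that $e=w_\ell a$ together with an edge of $\sigma$ at $a$ gives $\dist_G(\sigma, vw_\ell) \le 2$, and then carefully rule out equality: actually the cleanest route is that $e = w_\ell a$ is an edge with $\dist_G(e, vw_\ell)=1$ and $a\mid\m_\sigma$ gives $\dist_G(\sigma,e)=0$, whence $\dist_G(\sigma,vw_\ell)\le 1$, contradicting the hypothesis $\dist_G(\sigma,vw_i)\ge 2$. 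So in fact $\{vw_1,\dots,vw_t\}\subseteq\theta$; set $\theta' = \theta\setminus\{vw_1,\dots,vw_t\}$, observe $\m_{\theta'}$ is not divisible by $v$ or any $w_i$ (a leftover $w_i$-divisibility would again force, via an edge of $G'$ at $w_i$, a short distance exactly as above), hence $\m_{\theta'} = \m_\sigma$, and since $\sigma\in\Scarf(G')$ and $\theta'\in\Taylor(G')$ we get $\theta' = \sigma$, so $\theta = \sigma\cup\{vw_1,\dots,vw_t\}$, a contradiction. The final \say{as a consequence} clause follows from \cref{p:B26} applied repeatedly, or more directly: $\Scarf(H)$ is related to $\Scarf(G)$ by removing the remaining edges at vertices other than $v$ — but cleanest is to note $H$ and $G$ have the same restriction near these faces; I would phrase it as an appeal to \cref{p:B26}-style edge removal showing a Scarf face of $\Scarf(G'\cup\{vw_1,\dots,vw_t\})$ that survives as a Scarf face when we add back the edges of $G$ not incident to $v$, using that $\dist_G(\sigma,vw_i)\ge 2$ keeps these additions from interfering.

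The main obstacle I anticipate is the bookkeeping in part (2) when $t$ is large: several of the $vw_k$ may or may not already belong to $\sigma$ is \emph{not} an issue since $v\notin V(G')$ so no $vw_k$ is in $\sigma$, but the $w_k$ themselves can behave differently, and one must be careful that \say{$w_\ell\mid\m_\theta$ forces an edge of $G'$ at $w_\ell$} genuinely uses $vw_\ell\notin\theta$. The subtle point is the distance estimate: I must make sure the path witnessing $\dist_G(\sigma,vw_\ell)$ has the claimed length — the path is (edge of $\sigma$ at $a$), then $aw_\ell$, then $w_\ell v$, and I should double-check this yields $\dist_G(\sigma,vw_\ell)=1$ in the convention of \cref{d:distance} (distance $0$ from $\sigma$ to the edge $aw_\ell$, which is distance $1$ from $vw_\ell$, so the edge-to-face distance is $1$), contradicting $\ge 2$. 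Part (1) has its own small obstacle: when $w_iw_j$ or one of $vw_i,vw_j$ coincides with an edge already in $\sigma$ — but again $vw_i\notin\sigma$, and if $w_iw_j\in\sigma$ the construction of $\theta$ simply deletes $vw_i, vw_j$, and one checks $w_i, w_j\mid\m_\theta$ still via $w_iw_j\in\theta$, and $v\mid\m_\theta$ via the remaining $vw_k$ — which requires $t\ge 3$, so the edge case $t=2$ with $w_1w_2\in E(G)$ needs $v$'s divisibility supplied differently; there I would note $\theta = (\sigma\cup\{vw_1,vw_2\})\setminus\{vw_2\}$ works since $v\mid\m_\theta$ via $vw_1$ and $w_2\mid\m_\theta$ via $w_1w_2\in\sigma$, provided $w_1w_2\in\sigma$; if $w_1w_2\notin\sigma$ then $\theta = (\sigma\cup\{vw_1,vw_2\}\cup\{w_1w_2\})\setminus\{vw_2\}$ does it. Handling these low-$t$ corner cases cleanly is the place where I expect to spend the most care.
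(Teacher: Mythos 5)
Your arguments for part (1) and for the $\Scarf(G'\cup\{vw_1,\dots,vw_t\})$ membership in part (2) are essentially sound, and for (2) you take a more direct route than the paper: the paper builds the face up one edge $vw_i$ at a time by induction, invoking \cref{lem.Distance0B25}, whereas you analyze a hypothetical competing face $\theta$ with the same label all at once; both work. Two small blemishes: in part (1) the hypothesis $\dist_G(\sigma,vw_i)\ge 2$ already forces $w_iw_j\notin\sigma$ (the two edges would share $w_i$), so your case split and the special handling of $t=2$ are unnecessary — the paper simply takes $\tau=\sigma\cup\{w_iw_j\}$, keeping every $vw_k$, which works uniformly; and in part (2) your intermediate claim $\dist_G(aw_\ell,vw_\ell)=1$ is off by one (these edges share $w_\ell$, so the distance is $0$), though your intended conclusion $\dist_G(\sigma,vw_\ell)\le 1 <2$ still holds via the edge $aw_\ell$ joining an edge of $\sigma$ at $a$ to $vw_\ell$.

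The genuine gap is the final \say{as a consequence} clause, i.e.\ that $\sigma\cup\{vw_1,\dots,vw_t\}$ lies in $\Scarf(G)$ and not merely in $\Scarf(H)$ for $H=G'\cup\{vw_1,\dots,vw_t\}$. Your sketch misidentifies what separates $G$ from $H$: every edge of $G$ not incident to $v$ already lies in $G'\subseteq H$, so the edges being \say{added back} are exactly the edges $vw$ with $w\in N_G(v)\setminus\{w_1,\dots,w_t\}$. Moreover \cref{p:B26} does not apply here — it governs when an edge added to the \emph{face} keeps it Scarf, not when a fixed face stays Scarf after enlarging the ambient \emph{graph}; and \cref{e:B26} shows that Scarfness can indeed be destroyed by enlarging the graph, so this step needs a real argument. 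The needed argument is short and is in fact the natural extension of your own $\theta$-analysis to $\Taylor(G)$: if $\theta\in\Taylor(G)$ has $\m_\theta=\lcm(\m_\sigma,vw_1,\dots,vw_t)$ and $\theta\ne\sigma\cup\{vw_1,\dots,vw_t\}$, then by Scarfness in $H$ the face $\theta$ must contain some edge $vw$ with $w\in N_G(v)\setminus\{w_1,\dots,w_t\}$; then $w\mid\m_\theta$ forces $w\mid\m_\sigma$, so $\sigma$ has an edge $wx$, and the edge $wv$ gives $\dist_G(wx,vw_1)\le 1$, contradicting $\dist_G(\sigma,vw_1)\ge 2$. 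This is exactly how the paper closes the argument.
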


\begin{proof}
(1) Assume $w_iw_j \in E(G)$ for some $i \neq j$. Note that $w_iw_j \not\in \sigma$ since $\dist_G(\sigma, vw_i) \ge 2$. Set $\tau = \sigma \cup \{w_iw_j\}$. 
Clearly, $\lcm(\m_\sigma, vw_1, \dots, vw_t) = \lcm (\m_\tau, vw_1, \dots, vw_t)$. Thus, $\sigma \cup \{vw_1, \dots, vw_t\} \not\in \Scarf(G' \cup \{vw_1, \dots, vw_t\}).$

(2) By \cref{lem.Distance0B25}~(1),
 we have that $\sigma \cup \{vw_1\}
  \in \Scarf(G' \cup \{vw_1\})$. 
Suppose, by induction on $t$, that $\sigma \cup \{vw_1, \dots, vw_{i-1}\} \in \Scarf(G' \cup \{vw_1, \dots, vw_{i-1}\})$ for some $2 \le i \le t$. We shall show that
  \begin{equation} \label{e:4.3ind}
      \sigma \cup \{vw_1, \dots, vw_{i}\} \in \Scarf(G' \cup \{vw_1, \dots, vw_{i}\}).
  \end{equation}
  
For simplicity, set 
$$H = G' \cup \{vw_1, \dots, vw_{i}\}, \qand \tau = \sigma \cup \{vw_1, \dots, vw_{i-1}\}.$$
Observe that                 
\begin{itemize} 
    \item $\dist_H(\tau, vw_{i}) = 0$.
    \item $\dist_H(e, vw_{i}) \ne 1$ for all $e \in \tau$.
    \item $N_\tau(w_i) = \varnothing$, since $\dist_G(\sigma, vw_i) \ge 2$.
    \item $N_\tau(v) = \{w_1, \dots, w_{i-1}\}$, and so $N_H(w_i) \cap N_\tau(v) = \varnothing$ by condition (2).
\end{itemize}
Thus, by applying \cref{lem.Distance0B25}~(2), we arrive at \eqref{e:4.3ind}. For $i = t$, we obtain
$$\sigma \cup \{vw_1, \dots, vw_t\} \in
  \Scarf(G' \cup \{vw_1, \ldots, vw_t\}).$$
  
To prove the last statement, observe that $\sigma \cup \{vw_1, \dots, vw_t\} \not\in \Scarf(G)$ only if there exists $\theta \in \Taylor(G)$ such that $\theta \not= \sigma \cup \{vw_1, \dots, vw_t\}$ and 
$$\m_\theta = \lcm(\m_\sigma, vw_1, \dots, vw_t).$$
Since we have shown that $\sigma \cup \{vw_1, \dots, vw_t\} \in \Scarf(G' \cup \{vw_1, \dots, vw_t\})$, $\theta$ must contain an edge $vw$ for some $w \in N_G(v) \setminus \{w_1, \dots, w_t\}$. This implies that $w \mid \m_\theta$, and so $w \mid \m_\sigma$. Thus, there exists an edge $wx \in \sigma$. In this case, we get $\dist_G(wx, vw_1) \le 1$, a contradiction to the hypothesis. Thus, no such $\theta$ exists, and the statement is proved.
\end{proof}

\begin{lemma}
	\label{lem.dist1B27}
	Let $G$ be a graph, $v$ a vertex of $G$ and $G'=G \setminus \{v\}$.  Let $\sigma \in \Scarf(G')$ and $w_1, \dots, w_t \in N_G(v)$. If there exists an edge $e \in \sigma$ such that $\dist_G(e,vw_i) = 1$ for some $1 \le i \le t$, then $\sigma \cup \{vw_1, \dots, vw_t\} \not\in \Scarf(G).$
\end{lemma}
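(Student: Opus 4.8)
The plan is to carry over to several edges the toggling argument from the proof of \cref{l:B23}. Write $\m := \m_{\sigma \cup \{vw_1, \dots, vw_t\}} = \lcm(\m_\sigma, vw_1, \dots, vw_t)$. Since $\sigma \in \Taylor(G') \subseteq \Taylor(G)$ and each $vw_j \in E(G)$, the set $\sigma \cup \{vw_1, \dots, vw_t\}$ is a face of $\Taylor(G)$; to show it is not in $\Scarf(G)$ it suffices to produce a face $\theta \in \Taylor(G)$ with $\theta \ne \sigma \cup \{vw_1, \dots, vw_t\}$ and $\m_\theta = \m$.

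First I would unpack the hypothesis. Suppose $e = xy \in \sigma$ satisfies $\dist_G(e, vw_i) = 1$. Then $e$ and $vw_i$ are vertex-disjoint, so $x, y \notin \{v, w_i\}$, and there is an edge $f \in E(G)$ joining a vertex of $e$ to a vertex of $vw_i$; after possibly swapping $x$ and $y$ we may assume $f = xv$ or $f = xw_i$. In either case $f \mid \m$, because $x \mid \m_\sigma$ through $e$ and both $v$ and $w_i$ divide $\m$ through $vw_i$. The face $\theta$ will be obtained by inserting or deleting the single vertex $f$, and the only real work is to see when this genuinely changes the face while preserving $\m$.

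Then I would split into three cases. If $f \notin \sigma \cup \{vw_1, \dots, vw_t\}$, set $\theta = \sigma \cup \{vw_1, \dots, vw_t\} \cup \{f\}$: adding a vertex whose label already divides $\m$ leaves the lcm equal to $\m$, and $\theta$ is strictly larger. If $f \in \sigma$, then $f$ cannot contain $v$ (no vertex label of a face of $\Taylor(G')$ is divisible by $v$), so $f = xw_i$; set $\theta = (\sigma \sm \{f\}) \cup \{vw_1, \dots, vw_t\}$, and note $x \mid \m_\theta$ because $e \ne f$ (as $y \ne w_i$) still lies in $\theta$, while $w_i \mid \m_\theta$ through $vw_i$, whence $f \mid \m_\theta$ and $\m_\theta = \m$; here $\theta$ is strictly smaller. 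The remaining case is $f \in \{vw_1, \dots, vw_t\}$; then $f$ contains $v$, so $f = xv = vw_j$ for some $j$, forcing $x = w_j$.

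This last case is where the one-edge argument can fail — the naive toggle gives $\sigma \cup \{vw_1, \dots, vw_t\} \cup \{f\} = \sigma \cup \{vw_1, \dots, vw_t\}$, which is useless — and so it is the step I expect to be the main obstacle. The fix is the observation that $j \ne i$: otherwise $w_i = x$ would be a vertex of $e$, contradicting $\dist_G(e, vw_i) = 1$; hence $t \ge 2$ and $vw_i$ is a distinct edge still present after removing $vw_j$. So take $\theta = \sigma \cup (\{vw_1, \dots, vw_t\} \sm \{vw_j\})$; then $v \mid \m_\theta$ through $vw_i$ and $w_j = x \mid \m_\theta$ through $e$, so $vw_j = f \mid \m_\theta$ and $\m_\theta = \m$, while $vw_j \notin \theta$ makes $\theta$ strictly smaller than $\sigma \cup \{vw_1, \dots, vw_t\}$. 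In every case the required $\theta$ exists, and therefore $\sigma \cup \{vw_1, \dots, vw_t\} \notin \Scarf(G)$.
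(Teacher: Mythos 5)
Your proposal is correct and follows essentially the same route as the paper: identify the edge $f$ witnessing $\dist_G(e,vw_i)=1$, then toggle it in or out of the face to produce a second face of $\Taylor(G)$ with the same lcm label, with the same key observation in the delicate case (when $f=vw_j$, vertex-disjointness of $e$ and $vw_i$ forces $j\ne i$, hence $t\ge 2$, so $vw_j$ can be deleted). Your organization of the cases by the position of $f$ relative to the face matches the paper's cases ($f=xw_1$ in or out of $\sigma$; $f=xv$ with $x\notin\{w_1,\dots,w_t\}$ or $x=w_j$) one for one.
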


\begin{proof}
Without loss of generality, suppose that $e = xy$ and $\dist_G(e,vw_1) = 1$. Then, either $xw_1$ or $xv$ is an edge in $G$, and $x,y \not= v$.

Suppose that $xw_1 \in E(G)$. If $xw_1 \in \sigma$ then set $\tau = \sigma \sm \{xw_1\}$. Otherwise, set $\tau = \sigma \cup \{xw_1\}$. In both cases, we end up with $\lcm(\m_\sigma,vw_1, \dots, vw_t) = \lcm(\m_\tau,vw_1, \dots, vw_t)$. Thus, $\sigma \cup \{vw_1, \dots, vw_t\} \not\in \Scarf(G)$.

Suppose now that $xv \in E(G)$. That is, $x \in N_G(v)$. If $x \not\in \{w_1, \dots, w_t\}$ then
$$\lcm(\m_\sigma, vw_1, \dots, vw_t) = \lcm(\m_\sigma, vw_1, \dots, vw_t, vx).$$
Therefore, $\sigma \cup \{vw_1, \dots, vw_t\} \not\in \Scarf(G)$. If $x \in \{w_1, \dots, w_t\}$ then since $\dist_G(e,vw_1) \not= 0$, we have $x \not= w_1$ and $t \ge 2$. In this case,
$$\lcm(\m_\sigma, vw_1, \dots, vw_t) = \lcm(\m_\sigma, \{vw_1, \dots, vw_t\} \sm \{vx\}).$$
Hence, $\sigma \cup \{vw_1, \dots, vw_t\} \not\in \Scarf(G).$
\end{proof}

\begin{lemma}
    \label{lem.dist0B27}
     Let $G$ be a graph, $v$ a vertex of $G$ and $G'=G \setminus \{v\}$. Suppose $\sigma \in \Scarf(G')$ and $w_1, \dots, w_t \in N_G(v)$. Suppose that $\dist_G(\sigma, vw_i) = 0$, for some $1 \le i \le t$, and $\dist_G(e, vw_j) \not= 1$ for any $e \in \sigma$ and $1 \le j \le t$. Then 
    $$\sigma \cup \{vw_1, \dots, vw_t\} \in \Scarf(G' \cup \{vw_1, \dots, vw_t\}) \Longleftrightarrow t = 1.$$
\end{lemma}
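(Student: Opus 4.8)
The plan is to analyze the two directions of the equivalence separately, using the machinery of \cref{lem.Distance0B25} and \cref{p:B26}.

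First, the direction $(\Longleftarrow)$, i.e.\ $t=1$ implies $\sigma \cup \{vw_1\} \in \Scarf(G' \cup \{vw_1\})$. Here we are exactly in the setting of \cref{lem.Distance0B25}~(2) applied to the graph $G' \cup \{vw_1\}$ with the edge $vw_1$ playing the role of ``$vw$'': by hypothesis $\dist(\sigma, vw_1) = 0$ and $\dist(e, vw_1) \neq 1$ for all $e \in \sigma$. So I would check that conditions (i) and (ii) of that lemma hold automatically. Since $\sigma \in \Scarf(G')$ involves only edges of $G'$, the edge $vw_1$ is the only edge at $v$ present, so $N_\sigma(v) = \varnothing$, giving (i). For (ii): $N_\sigma(v) \cap N_{G'\cup\{vw_1\}}(w_1) = \varnothing$ is immediate from $N_\sigma(v)=\varnothing$; and if some $a \in N_\sigma(w_1) \cap N_{G'\cup\{vw_1\}}(v)$, then $a \neq w_1$ and $av \in E(G' \cup \{vw_1\})$ forces $a = w_1$ (the only neighbor of $v$ there), a contradiction — unless $a$ is a neighbor of $v$ \emph{in $G$}; but then $aw_1 \in \sigma$ and $av \in E(G)$ give $\dist_G(aw_1, vw_1) \le 1$, contradicting either $\dist_G(\sigma,vw_1)=0$ with $aw_1$ at distance $1$, or more precisely contradicting the standing hypothesis $\dist_G(e,vw_j)\neq 1$ for all $e\in\sigma$. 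So (ii) holds, and \cref{lem.Distance0B25}~(2) yields the conclusion.

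Next, the direction $(\Longrightarrow)$: if $t \geq 2$, I want to exhibit a face $\theta \in \Taylor(G' \cup \{vw_1,\dots,vw_t\})$ with $\theta \neq \sigma \cup \{vw_1,\dots,vw_t\}$ and $\m_\theta = \lcm(\m_\sigma, vw_1,\dots,vw_t)$. Relabel so that $\dist_G(\sigma, vw_1) = 0$, i.e.\ there is an edge $w_1 x \in \sigma$ for some $x$. The key observation is that $v \nmid \m_\sigma$ (since no edge at $v$ lies in $\sigma \subseteq \Taylor(G')$), so $v$ enters $\m_\theta$ only through the edges $vw_i$. Now set $\theta = (\sigma \cup \{vw_1, \dots, vw_t\}) \setminus \{vw_1\}$. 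Since $w_1 \mid \m_\sigma$ already (because $w_1 x \in \sigma$), dropping $vw_1$ does not change which of $v, w_1$ divide the lcm — $w_1$ is still there via $w_1 x$, and $v$ is still there via $vw_2$ (this is where $t \ge 2$ is used). Hence $\m_\theta = \lcm(\m_\sigma, vw_1, \dots, vw_t)$, while $\theta \neq \sigma \cup \{vw_1,\dots,vw_t\}$, so the latter is not in $\Scarf(G' \cup \{vw_1,\dots,vw_t\})$. The main thing to be careful about is the case analysis for which index $i$ achieves $\dist_G(\sigma, vw_i) = 0$ and making sure the removed edge is genuinely an edge among $\{vw_1,\dots,vw_t\}$; one should also double-check that $\dist_G(\sigma, vw_1)=0$ together with $\dist_G(e,vw_j)\neq 1$ for all $e\in\sigma$ forces the realizing edge to be literally of the form $w_1 x$ with $w_1 \in \sigma$ (not merely $xv$ or $xw_1$ for a separate edge $xy\in\sigma$), which follows since distance $0$ means $vw_1$ shares the vertex $w_1$ with an edge of $\sigma$.

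I expect the $(\Longrightarrow)$ direction to be the genuinely delicate part, not because the construction of $\theta$ is hard, but because one must confirm that the exclusion of the distance-$1$ scenario (already handled in \cref{lem.dist1B27}) is what guarantees $\theta$ is a legitimate Taylor face with the claimed label; in particular that no variable outside $\{v\} \cup V(G')$ intrudes and that removing $vw_1$ cannot accidentally drop a variable from the support of the lcm. Once $t \ge 2$ is in hand, the presence of at least two edges $vw_1, vw_2$ at $v$ is precisely the redundancy that kills the Scarf property, and the proof is essentially the one-line lcm computation above.
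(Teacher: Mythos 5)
Your proof is correct, and the forward direction ($t\ge 2$ implies non-Scarf) is essentially the paper's own argument: locate an edge $w_1x\in\sigma$ witnessing $\dist_G(\sigma,vw_1)=0$ (which must contain $w_1$, since edges of $\sigma$ avoid $v$) and observe that dropping $vw_1$ does not change the lcm because $w_1\mid \m_\sigma$ and $v$ still divides via $vw_2$. Where you genuinely diverge is the converse. The paper proves $t=1\Rightarrow \sigma\cup\{vw_1\}\in\Scarf(G'\cup\{vw_1\})$ by a direct, self-contained argument: take any $\theta\in\Taylor(G'\cup\{vw_1\})$ with $\m_\theta=\lcm(\m_\sigma,vw_1)$, force $vw_1\in\theta$, and then use the hypothesis $\dist_G(e,vw_1)\ne 1$ together with $\sigma\in\Scarf(G')$ to rule out $w_1\nmid\m_{\theta\setminus\{vw_1\}}$ and conclude $\theta\setminus\{vw_1\}=\sigma$. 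You instead invoke \cref{lem.Distance0B25}~(2) for the graph $G'\cup\{vw_1\}$, checking that its conditions (i) and (ii) hold automatically because $N_\sigma(v)=\varnothing$ and $v$ has the unique neighbor $w_1$ in that graph. This is legitimate and mirrors how the paper itself uses \cref{lem.Distance0B25}~(1) inside \cref{lem.dist2B27}; it buys a shorter proof and makes transparent that the $t=1$ case is really a corollary of the edge-removal analysis of \cref{s:subgraph}. The one point you should state explicitly is that the distance hypotheses of \cref{lem.Distance0B25} must be verified in $G'\cup\{vw_1\}$ rather than in $G$: this does follow, since distance $0$ (a shared vertex) is independent of the ambient graph and any edge realizing distance $1$ in the subgraph $G'\cup\{vw_1\}$ is also an edge of $G$, contradicting $\dist_G(e,vw_1)\ne 1$; the paper's direct argument sidesteps this transfer at the cost of redoing the uniqueness-of-label computation by hand.
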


\begin{proof}  Without loss of generality, assume that $\dist_G(\sigma, vw_1) = 0$.
    If $t \ge 2$, then  there exists an edge $aw_1 \in \sigma$, and so 
    $$\lcm(\m_\sigma, vw_1, \dots, vw_t) = \lcm(\m_\sigma, vw_2, \dots, vw_t).$$
    Thus,
    $\sigma \cup \{vw_1, \dots, vw_t\} \not\in \Scarf(G' \cup \{vw_1, \dots, vw_t\}).$

    Conversely, suppose that $t = 1$. To prove that $\sigma \cup \{vw_1\} \in \Scarf(G' \cup \{vw_1\})$, it suffices to show that if $\theta \in \Taylor(G' \cup \{vw_1\})$ is such that 
    $\lcm(\m_\sigma, vw_1) = \m_\theta$
    then $\theta = \sigma \cup \{vw_1\}$.  

   Since $v \mid \m_\theta$, we must have $vw_1 \in \theta$. Set $\theta' = \theta \setminus \{vw_1\}$. 
   
    Suppose $w_1 \nmid \m_{\theta'}$. Then  since $\dist_G(\sigma, vw_1) = 0$, there must exists an edge $aw_1 \in \sigma$. This implies that $a \mid \m_{\theta'}$, and so there is an edge $ab \in \theta'$, where $b \not= w_1$. Since $b \mid \m_{\theta'}$, $b \mid \m_\sigma$.  There are two cases to consider.
    
      \begin{itemize}
       \item  If $bw_1 \in E(G)$, set $\tau = \sigma \setminus \{bw_1\}$ if $bw_1 \in \sigma$, or $\tau = \sigma \cup \{bw_1\}$ otherwise. Then $\m_\sigma = \m_\tau$, a contradiction to the fact that $\sigma \in \Scarf(G')$. 
      
      \item If $bw_1 \not\in E(G)$, it follows that $\dist_G(ab, vw_1) = 1$. Particularly, $ab \not\in \sigma$.
       Then $\m_\sigma = \m_{\sigma \cup \{ab\}}$, again a contradiction to the fact that $\sigma \in \Scarf(G')$.

    \end{itemize} 
    
    So we must have $w_1 \mid \m_{\theta'}$, then
    $\m_\sigma = \m_{\theta'}$. This, together with the fact that $\sigma \in \Scarf(G')$ forces $\sigma = \theta'$, whence $\sigma \cup \{vw_1\} = \theta$, and we are done.  
\end{proof}

\begin{lemma}
    \label{rem.dist0B27_extra}
    Assume the same hypothesis as in \cref{lem.dist0B27}. The following are equivalent:
   \begin{enumerate} 
    \item $\sigma \cup \{vw_1, \dots, vw_t\} \in \Scarf(G)$.
    \item $t = 1$ and $N_\sigma(w_1) \cap N_G(v) = \varnothing.$
    \end{enumerate}
\end{lemma}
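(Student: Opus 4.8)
The plan is to reduce the statement to the case $t = 1$, where I will prove the equivalence
$$\sigma \cup \{vw_1\} \in \Scarf(G) \iff N_\sigma(w_1) \cap N_G(v) = \varnothing .$$
The reduction is immediate: if $t \ge 2$, then \cref{lem.dist0B27} already gives $\sigma \cup \{vw_1, \dots, vw_t\} \notin \Scarf(G' \cup \{vw_1, \dots, vw_t\})$, so some face $\theta$ of $\Taylor(G' \cup \{vw_1, \dots, vw_t\})$, distinct from $\sigma \cup \{vw_1, \dots, vw_t\}$, carries the same monomial label; since $G' \cup \{vw_1, \dots, vw_t\}$ is a subgraph of $G$, that $\theta$ lies in $\Taylor(G)$ as well, so $\sigma \cup \{vw_1, \dots, vw_t\} \notin \Scarf(G)$. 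Hence $(1)$ forces $t = 1$, and it remains to establish the displayed equivalence.

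For the implication $(\Leftarrow)$ I would argue by contradiction. By \cref{lem.dist0B27} we know $\sigma \cup \{vw_1\} \in \Scarf(G' \cup \{vw_1\})$; so if $\sigma \cup \{vw_1\} \notin \Scarf(G)$, a witness $\theta \in \Taylor(G)$ — with $\theta \ne \sigma \cup \{vw_1\}$ and $\m_\theta = \lcm(\m_\sigma, vw_1)$ — cannot lie in $\Taylor(G' \cup \{vw_1\})$ (that would contradict uniqueness of the label there), so $\theta$ must contain an edge $vw$ with $w \in N_G(v) \setminus \{w_1\}$. Then $w \mid \m_\theta = \lcm(\m_\sigma, vw_1)$, and $w \notin \{v, w_1\}$ forces $w \mid \m_\sigma$, so there is an edge $wx \in \sigma$. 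Since $wv \in E(G)$ we have $\dist_G(wx, vw_1) \le 1$, whence the standing hypothesis $\dist_G(wx, vw_1) \ne 1$ gives $\dist_G(wx, vw_1) = 0$; thus $wx$ and $vw_1$ share a vertex. As $w \notin \{v, w_1\}$ and $\sigma$ (being a face of $\Taylor(G')$) contains no edge through $v$, that shared vertex must be $x = w_1$, so $w \in N_\sigma(w_1) \cap N_G(v)$ — a contradiction.

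For $(\Rightarrow)$ I would exhibit an explicit witness: given $w \in N_\sigma(w_1) \cap N_G(v)$, set $\theta = \sigma \cup \{vw_1, vw\}$. Then $vw \in E(G)$; since $ww_1 \in \sigma$ we have $w \ne w_1$, and $\sigma$ has no edge through $v$, so $vw \notin \sigma \cup \{vw_1\}$ and hence $\theta \ne \sigma \cup \{vw_1\}$. Finally $v \mid vw_1$ and $w \mid \m_\sigma$ give $vw \mid \lcm(\m_\sigma, vw_1)$, so $\m_\theta = \lcm(\m_\sigma, vw_1) = \m_{\sigma \cup \{vw_1\}}$, forcing $\sigma \cup \{vw_1\} \notin \Scarf(G)$. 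The step I expect to require the most care is the distance bookkeeping in the $(\Leftarrow)$ argument: one must see both that any new witness face is forced to introduce a fresh edge at $v$ (this uses that $\sigma \cup \{vw_1\}$ is already Scarf over $G' \cup \{vw_1\}$) and that the hypothesis $\dist_G(e, vw_1) \ne 1$ pins the far endpoint of that edge into $N_\sigma(w_1)$ rather than merely somewhere in $\sigma$; everything else is lcm manipulation of the kind used repeatedly in \cref{s:subgraph} and \cref{s:induced}.
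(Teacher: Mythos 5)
Your proof is correct and follows essentially the same route as the paper's: reduce to $t=1$ via \cref{lem.dist0B27}, use the face $\sigma \cup \{vw_1, vw\}$ with $w \in N_\sigma(w_1)\cap N_G(v)$ as the witness for one direction, and for the other direction use that $\sigma\cup\{vw_1\}\in\Scarf(G'\cup\{vw_1\})$ forces any label-sharing face to contain a new edge $vw$, then contradict the distance/neighborhood hypotheses. The only difference is cosmetic: the paper rules out $x=w_1$ via the neighborhood condition and contradicts the distance hypothesis, while you force $x=w_1$ via the distance hypothesis and contradict the neighborhood condition.
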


\begin{proof}
    We first prove (1) $\Longrightarrow$ (2). Assume that $\sigma \cup \{vw_1, \dots, vw_t\} \in \Scarf(G)$. By essentially the same proof as in \cref{lem.dist0B27}, we can show that $t = 1$. If $N_G(v) = \{w_1\}$ then (2) is established. 
    
    Suppose that $|N_G(v)| \ge 2$, and consider any $w \in N_G(v) \setminus \{w_1\}$. If $w_1w \in \sigma$, then
    $$\lcm(\m_\sigma, vw_1) = \lcm(\m_\sigma, vw_1, vw),$$
    a contradiction to the assumption that $\sigma \cup \{vw_1\} \in \Scarf(G)$. Hence, $N_\sigma(w_1) \cap N_G(v) = \varnothing.$
    
    We proceed to prove (2) $\Longrightarrow$ (1). Assume that $t=1$ and $N_\sigma(w_1) \cap N_G(v) = \varnothing$. Observe that $\sigma \cup \{vw_1\} \not\in \Scarf(G)$ only if there exists $\theta \in \Taylor(G)$ such that $\theta \not= \sigma \cup \{vw_1\}$ and $\m_\theta = \lcm(\m_\sigma, vw_1)$. Since, by \cref{lem.dist0B27}, $\sigma \cup \{vw_1\} \in \Scarf(G' \cup \{vw_1\})$, $\theta$ must contain an edge $vw$, for some $w \in N_G(v) \setminus \{w_1\}$. 

    Since $w \mid \m_\theta$, we have $w \mid \m_\sigma$. Thus, there is an edge $wx \in \sigma$. Since $N_\sigma(w_1) \cap N_G(v) = \varnothing$, we must have $x \not= w_1$. This implies that $\dist_G(wx, vw_1) = 1$, a contradiction to the hypothesis. Hence, (1) is established, and the lemma is proved.
\end{proof}

We now arrive at the main result of this section. This result provides a recursive algorithm for constructing the Scarf complex of the edge ideal of any graph.

\begin{theorem}[{\bf Removing a Vertex}]\label{t:B29}
	Let $G$ be a graph and let $v \in V(G)$. Set $G' = G \setminus \{v\}$.
	Let $\tau \in \Taylor(G)$. Then, $\tau \in \Scarf(G)$ if and only if the following conditions are satisfied:
	\begin{enumerate}
		\item $\sigma = \tau\big|_{G'} \in \Scarf(G')$, and
		\item $\tau = \sigma \cup \{vw_1, \dots, vw_t\}$, where $w_1, \dots, w_t \in N_G(v)$, and either
		\begin{enumerate}
            \item $\dist_G(\sigma, vw_i) \ge 2$, for all $1 \le i \le t$, and $w_iw_j \not\in E(G)$ for all $1 \leq i <j \leq t$; or      
            \item $\dist_G(\sigma, vw_1) = 0$, $t = 1$, $N_\sigma(w_1) \cap N_G(v) = \varnothing$, and $\dist_G(e, vw_1) \not= 1$ for all $e \in \sigma$.
		\end{enumerate}
	\end{enumerate}
\end{theorem}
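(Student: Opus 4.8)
The plan is to obtain \cref{t:B29} by assembling the three case-lemmas \cref{lem.dist2B27}, \cref{lem.dist1B27}, \cref{lem.dist0B27} together with the refinement \cref{rem.dist0B27_extra}, after two elementary observations. First, any $\tau \in \Taylor(G)$ decomposes uniquely as $\tau = \sigma \cup \{vw_1,\dots,vw_t\}$, where $\sigma = \tau|_{G'}$ consists of the edges of $\tau$ not containing $v$ (so $\sigma \in \Taylor(G')$, since $G'$ is an induced subgraph) and $w_1,\dots,w_t$ are the distinct neighbors of $v$ occurring in $\tau$; thus the ``shape'' clause of condition~(2) is automatic, and the real content of~(2) is the dichotomy between its two alternatives, which I will call (2a) and (2b). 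Second, if $H$ is a subgraph of $G$, then $\Taylor(H) \subseteq \Taylor(G)$ with identical face labels, so a face that is not Scarf in $H$ is also not Scarf in $G$; I will apply this with $H = G'\cup\{vw_1,\dots,vw_t\}$ to transport the non-Scarf conclusions of the case-lemmas into statements about $\Scarf(G)$.

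For the forward implication, suppose $\tau = \sigma \cup \{vw_1,\dots,vw_t\}\in\Scarf(G)$. Then $\sigma\in\Scarf(G')$: otherwise there is $\rho\in\Taylor(G')$ with $\rho\neq\sigma$ and $\m_\rho=\m_\sigma$, and then $\rho\cup\{vw_1,\dots,vw_t\}$ is a face of $\Taylor(G)$ distinct from $\tau$ with the same label, contradicting $\tau\in\Scarf(G)$. If $t=0$, then (2a) holds vacuously and we are done, so assume $t\ge 1$. By \cref{lem.dist1B27} no pair $e\in\sigma$, $1\le i\le t$ can satisfy $\dist_G(e,vw_i)=1$, as that would force $\tau\notin\Scarf(G)$; hence $\dist_G(\sigma,vw_i)\in\{0\}\cup\{2,3,\dots\}$ for every $i$, and either $\dist_G(\sigma,vw_i)\ge 2$ for all $i$, or $\dist_G(\sigma,vw_i)=0$ for some $i$. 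In the first case, \cref{lem.dist2B27}(1) together with the subgraph observation shows that an edge $w_iw_j\in E(G)$ with $i\neq j$ would contradict $\tau\in\Scarf(G)$, so (2a) holds. In the second case, the hypotheses of \cref{lem.dist0B27} and \cref{rem.dist0B27_extra} are met, and since $\tau\in\Scarf(G)$, \cref{rem.dist0B27_extra} forces $t=1$ and $N_\sigma(w_1)\cap N_G(v)=\varnothing$; combined with $\dist_G(\sigma,vw_1)=0$ and the distance-one exclusion already established, this is precisely (2b).

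For the reverse implication, assume (1) and (2). If $t=0$ (the only possibility when $v$ is isolated, and admitted by (2a) vacuously), then $v\nmid\m_\tau$, so any face of $\Taylor(G)$ with label $\m_\tau$ lies in $\Taylor(G')$ and equals $\tau=\sigma$ by (1); hence $\tau\in\Scarf(G)$. If (2a) holds with $t\ge 1$, then the concluding sentence of \cref{lem.dist2B27}(2) gives $\tau\in\Scarf(G)$ directly. If (2b) holds, then $t=1$, the hypotheses of \cref{rem.dist0B27_extra} are satisfied (by (1) together with the clauses $\dist_G(\sigma,vw_1)=0$ and $\dist_G(e,vw_1)\neq 1$ for all $e\in\sigma$ from (2b)), and the remaining clause $N_\sigma(w_1)\cap N_G(v)=\varnothing$ of (2b) together with $t=1$ is exactly the ``numerical'' condition in \cref{rem.dist0B27_extra}; so that lemma yields $\tau\in\Scarf(G)$. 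Since (2a) and (2b) are the only alternatives, this completes the proof.

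The main difficulty is organizational rather than technical: one must verify that, after excluding distance~$1$, the cases ``all $vw_i$ at distance $\ge 2$'' and ``some $vw_i$ at distance $0$'' are genuinely complementary and correspond respectively to (2a) and (2b); check that the degenerate instances $t=0$ and $\sigma=\varnothing$ fall under (2a); and confirm that each invoked lemma's hypotheses — in particular the clause ``$\dist_G(e,vw_j)\neq 1$ for all $j$'' required by \cref{lem.dist0B27} and \cref{rem.dist0B27_extra} — are available at the point of application rather than only once $t=1$ has been deduced.
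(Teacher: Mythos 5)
Your proof is correct and follows essentially the same route as the paper: decompose $\tau$ as $\sigma\cup\{vw_1,\dots,vw_t\}$, get $\sigma\in\Scarf(G')$ (the paper cites \cref{l:induced}, you argue it directly), and assemble \cref{lem.dist2B27}, \cref{lem.dist1B27}, \cref{lem.dist0B27} and \cref{rem.dist0B27_extra}. You merely supply details the paper leaves implicit, notably the subgraph-monotonicity observation needed to upgrade the conclusion of \cref{lem.dist2B27}(1) from $\Scarf(G'\cup\{vw_1,\dots,vw_t\})$ to $\Scarf(G)$, and the degenerate cases $t=0$ and $\sigma=\varnothing$.
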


\begin{proof} By \cref{l:induced}, if $\tau \in \Scarf(G)$ then
  $\sigma = \tau\big|_{G'} \in \Scarf(G')$. For any $\tau \in
  \Taylor(G)$, we can always write $\tau = \sigma \cup \{vw_1, \dots,
  vw_t\}$, where $\sigma = \tau\big|_{G'}$ and $w_1, \dots, w_t \in
  N_G(v)$. The assertion now follows from  \cref{lem.dist2B27}, \cref{lem.dist1B27},
 \cref{lem.dist0B27}, and \cref{rem.dist0B27_extra}.
\end{proof}

\section{The Scarf complex of a forest} \label{s:tree}

In this section, we apply results in \cref{s:induced} when the deleted vertex is a leaf. We then focus on trees and more generally forests. Using the fact that deleting a leaf of a tree (or forest) results in a smaller tree (or forest), the results give a recursive algorithm for the computation of the Scarf complex of any tree or forest. We also give a direct method of computing the Scarf complex of a forest. 

Throughout the section, unless otherwise stated, we shall assume $G$ is a graph with a
leaf vertex $v \in V(G)$ with the associated edge $vw \in E(G)$. Set
$G' = G \setminus \{v\}$. That is,
\begin{equation}\label{e:leaf} 
G=G'\cup\{vw\} \qand I(G)=I(G')+(vw).
\end{equation}

\begin{theorem} \label{t:B14}
  Let $G$ be a graph with leaf $v$. With notation as in \eqref{e:leaf}, let $\sigma \in \Scarf(G')$. The following are equivalent:
 \begin{itemize}
 	\item[$(a)$] $\sigma \cup \{vw\} \in \Scarf(G)$.
 	\item[$(b)$]  $\forall e \in \sigma, \dist_G(e,vw) \neq 1$.
 \end{itemize}
\end{theorem}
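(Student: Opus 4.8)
The plan is to derive this as a clean special case of \cref{p:B26} (equivalently of \cref{t:B29}), using heavily the fact that $v$ is a \emph{leaf}, so that $vw$ is the unique edge of $G$ incident to $v$ and $N_G(v) = \{w\}$. The implication $(a) \Rightarrow (b)$ is already \cref{l:B23} and requires nothing new. For $(b) \Rightarrow (a)$, I would apply \cref{lem.Distance0B25}: since $\dist_G(e,vw) \neq 1$ for all $e \in \sigma$ by hypothesis, we are in the situation of that lemma, and we only need to check that either condition (1) or condition (2) of \cref{lem.Distance0B25} holds. Since $v$ is a leaf, $N_G(v) = \{w\}$, which immediately forces $N_\sigma(v) \subseteq N_G(v) \cap \sigma = \varnothing$ unless $vw \in \sigma$ — but $\sigma \in \Scarf(G') = \Scarf(G \setminus \{v\})$ has no vertex labeled by an edge through $v$, so $vw \notin \sigma$ and hence $N_\sigma(v) = \varnothing$. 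Thus condition (2)(i) of \cref{lem.Distance0B25} is automatic.

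So the only remaining case is $\dist_G(\sigma, vw) = 0$ (the case $\dist_G(\sigma,vw)\ge 2$ being handled directly by \cref{lem.Distance0B25}(1)), and there I must verify condition (2)(ii), namely $N_\sigma(w) \cap N_G(v) = \varnothing = N_\sigma(v) \cap N_G(w)$. The second equality is immediate since $N_\sigma(v) = \varnothing$. For the first, note $N_G(v) = \{w\}$, so $N_\sigma(w) \cap N_G(v) \subseteq \{w\}$; but $w \notin N_\sigma(w)$ since $ww$ is not an edge of the simple graph $G$. Hence $N_\sigma(w) \cap N_G(v) = \varnothing$ as well, and \cref{lem.Distance0B25}(2) gives $\sigma \cup \{vw\} \in \Scarf(G)$. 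This completes $(b) \Rightarrow (a)$.

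\textbf{Main obstacle.} There isn't a serious obstacle here: the whole content is organizing the case split ($\dist_G(\sigma,vw) \in \{0\} \cup \{\ge 2\}$; the value $1$ is excluded by hypothesis $(b)$) and observing that being a leaf collapses all the neighborhood conditions of \cref{lem.Distance0B25}(2) to trivialities. The one point that deserves a careful sentence is why $N_\sigma(v) = \varnothing$ — i.e., that no face of $\Scarf(G')$ can contain the vertex labeled $vw$, since $vw$ is not even a generator of $I(G')$, so $vw \notin \Taylor(G') \supseteq \Scarf(G')$. After that, the theorem is essentially a corollary of \cref{lem.Distance0B25}, and I would phrase the proof as: "$(a)\Rightarrow(b)$ is \cref{l:B23}. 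For the converse, since $v$ is a leaf, $N_G(v) = \{w\}$ and $vw \notin \sigma$, so $N_\sigma(v) = \varnothing$ and $N_\sigma(w)\cap N_G(v) = \varnothing$; now apply \cref{lem.Distance0B25}, whose hypotheses and conclusion conditions are all satisfied."
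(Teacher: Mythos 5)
Your proof is correct, and in substance it is the same argument the paper gives: both reduce to the case split $\dist_G(\sigma,vw)\in\{0\}\cup\{\ge 2\}$ and use the leaf hypothesis $N_G(v)=\{w\}$ (plus $vw\notin\sigma$) to make the neighborhood conditions vacuous. The only difference is bookkeeping: you route the argument through the edge-removal results of \cref{s:subgraph} (\cref{l:B23} and \cref{lem.Distance0B25}, i.e.\ \cref{p:B26}), while the paper cites the vertex-removal results (\cref{lem.dist1B27} for $(a)\Rightarrow(b)$ and \cref{t:B29} for $(b)\Rightarrow(a)$); since $v$ is a leaf, deleting the vertex $v$ and deleting the edge $vw$ produce the same edge ideal, so $\Scarf(G')$ is the same complex in either setup and the two routes coincide --- your parenthetical justification of exactly this identification (so that $N_\sigma(v)=\varnothing$ and the lemmas apply) is the one point worth keeping explicit in a final write-up.
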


\begin{proof}
    The implication (a) $\Longrightarrow$ (b) follows from \cref{lem.dist1B27}. On the other hand, if $\dist_G(e,vw) \not=1$ for all $e \in \sigma$, then either $\dist_G(\sigma, vw) \ge 2$ or $\dist_G(\sigma,vw) = 0$. If $\dist_G(\sigma,vw) = 0$ then, since $v$ is a leaf in $G$, $N_G(v) = \{w\}$. Thus, the implication (b) $\Longrightarrow$ (a) follows from \cref{t:B29}.
\end{proof}

\begin{example}\label{e:B18}
	Let $G$ be the graph depicted below corresponding to the edge ideal $I=(ab,bc,bd,de)$.
	$$
	\begin{tikzpicture}
		\tikzstyle{point}=[inner sep=0pt]
		\node (a)[point,label=left:$a$] at (1,2) {};
		\node (b)[point,label=above:$b$] at (2,1.5) {};
		\node (c)[point,label=left:$c$] at (1,1) {};
		\node (d)[point,label=above:$d$] at (3,1.5) {};
		\node (e)[point,label=above:$e$] at (4,1.5) {};
		
		\draw (a.center) -- (b.center);
		\draw (c.center) -- (b.center);
		\draw (b.center) -- (d.center);
		\draw (d.center) -- (e.center);
	
		\filldraw [black] (a.center) circle (1pt);
		\filldraw [black] (b.center) circle (1pt);
		\filldraw [black] (c.center) circle (1pt);
		\filldraw [black] (d.center) circle (1pt);
		\filldraw [black] (e.center) circle (1pt);
		
	\end{tikzpicture}
	$$
	Then $\Taylor(G)$ is a simplex on $4$ vertices. It is easy to verify that, since $\lcm(ab,de)=\lcm(ab,bd,de)$ and $\lcm(bc,de)=\lcm(bc,bd,de)$ that the Scarf complex of $G$ is:
	$$
	\begin{tikzpicture}
		\tikzstyle{point}=[inner sep=0pt]
		\node (ab)[point,label=left:$ab$] at (1,1) {};
		\node (bc)[point,label=above:$bc$] at (1.5,2) {};
		\node (bd)[point,label=below:$bd$] at (2,1) {};
		\node (de)[point,label=right:$de$] at (3,1) {};
	
		\draw (ab.center) -- (bc.center);
		\draw (ab.center) -- (bd.center);
		\draw (bc.center) -- (bd.center);
		\draw (bd.center) -- (de.center);
	
		\filldraw [black] (ab.center) circle (1pt);
		\filldraw [black] (bc.center) circle (1pt);
		\filldraw [black] (bd.center) circle (1pt);
		\filldraw [black] (de.center) circle (1pt);
		
		\draw  [fill=gray!20] (ab.center) -- (bc.center) -- (bd.center) -- cycle;
		
	\end{tikzpicture}
	$$
	If $de$ plays the role of $wv$ in the theorem, then $G'$ corresponds to a claw, whose Scarf complex is a filled triangle. The only face of this complex satisfying the conditions of \cref{t:B14} is $\{bd\}$ since $\dist_G(ab,de)=\dist_G(cb,de)=1$.
\end{example}

  With notation as in \eqref{e:leaf}, let $G''$ be the induced subgraph of $G$ on vertices of distance $\geq 2$ from $w$, and let $\sigma \in \Scarf(G'')$. Then every edge of $G''$ has distance at least $2$ from $vw$, and so by  \cref{lem.Distance0B25} and \cref{l:induced}, we have 
$\sigma \cup \{vw\} \in \Scarf(G)$.

\begin{example}\label{e:B11}
Let $G$ be the path on 6 vertices, labeled $a$ through $f$
$$
	\begin{tikzpicture}
		\tikzstyle{point}=[inner sep=0pt]
		\node (a)[point,label=above:$a$] at (1,0) {};
		\node (b)[point,label=above:$b$] at (2,0) {};
		\node (c)[point,label=above:$c$] at (3,0) {};
		\node (d)[point,label=above:$d$] at (4,0) {};
		\node (e)[point,label=above:$e$] at (5,0) {};
		\node (f)[point,label=above:$f$] at (6,0) {};
		\draw (a.center) -- (b.center);
		\draw (b.center) -- (c.center);
		\draw (c.center) -- (d.center);
		\draw (d.center) -- (d.center);
		\draw (d.center) -- (e.center);
		\draw (e.center) -- (f.center);
		\filldraw [black] (a.center) circle (1pt);
		\filldraw [black] (b.center) circle (1pt);
		\filldraw [black] (c.center) circle (1pt);
		\filldraw [black] (d.center) circle (1pt);
		\filldraw [black] (e.center) circle (1pt);
		\filldraw [black] (f.center) circle (1pt);
	\end{tikzpicture}
$$
and $I=I(G)$. Then $\Taylor(G)$ is a $4$-dimensional simplex on $5$ vertices, labeled by the generators of $I=(ab,bc,cd,de,ef)$. Using the notation above with $v=f$, we have $w=e$, $G'$ is the path from $a$ to $e$ and $G''$ is the path from $a$ to $c$.

By computing the labels of all possible faces of $\Taylor(G)$ and identifying faces whose labels are unique, it is straightforward to verify that $\Scarf(G)$ is the complex with vertices $ab,bc,cd,de,ef$, edges labeled $abc, bcd,cde,def,abde,abef,bcef$, and triangles $abcef, abdef$ depicted below.
$$
\begin{tikzpicture}
	\tikzstyle{point}=[inner sep=0pt]
	\node (ab)[point,label=above:$ab$] at (2,1) {};
	\node (bc)[point,label=below:$bc$] at (1,0) {};
	\node (cd)[point,label=right:$cd$] at (3,1) {};
	\node (de)[point,label=above:$de$] at (1,2) {};
	\node (ef)[point,label=left:$ef$] at (0,1) {};
	
	\draw (ab.center) -- (de.center);
	\draw (ab.center) -- (bc.center);
	\draw (ab.center) -- (ef.center);
	\draw (de.center) -- (ef.center);
	\draw (bc.center) -- (ef.center);
	\draw (cd.center) -- (de.center);
	\draw (cd.center) -- (bc.center);
	\filldraw [black] (ab.center) circle (1pt);
	\filldraw [black] (bc.center) circle (1pt);
	\filldraw [black] (cd.center) circle (1pt);
	\filldraw [black] (de.center) circle (1pt);
	\filldraw [black] (ef.center) circle (1pt);
	
	\draw  [fill=gray!20] (ab.center) -- (de.center) -- (ef.center) -- cycle;
	\draw  [fill=gray!20] (ab.center) -- (bc.center) -- (ef.center) -- cycle;

	\pgfputat{\pgfxy(.6,1.4)}{\pgfbox[left,center]{\tiny{$ abdef$}}}
	\pgfputat{\pgfxy(.6,.6)}{\pgfbox[left,center]{\tiny{$abcef$}}}
\end{tikzpicture}
$$
Now, $\Scarf(G'')=\Taylor(G'')$ consists of two vertices and the edge $\sigma = \{ab,bc\}$. Note that $\sigma \cup \{ef\} \in \Scarf(G)$, corresponding to the lower of the two shaded triangles in the picture above. The other triangle in $\Scarf(G)$ is explained by the next statement.
\end{example}
We now restrict our attention to the case of forests. While the earlier theorems give a recursive algorithm for computing the Scarf complex, the next theorem provides a direct way to compute the Scarf complex when the graph is a forest.

\begin{theorem}[{\bf Scarf Complexes of Forests}]\label{t:tree}
  Let $G$ be a forest with $q$ edges. Let $K_q$ be the complete graph on
  $q$ vertices each labeled with an edge of $G$. Let $K'$ be the
  subgraph of $K_q$ obtained by removing edges $\{e,e'\}$ from $K_q$,
  where $e$ and $e'$ are edges of $G$ with $\dist_G(e,e')=1$.  Then
  $\Scarf(G)$ is the clique complex of $K'$.
\end{theorem}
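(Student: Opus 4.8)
The goal is to show that $\Scarf(G)$ for a forest $G$ is exactly the clique complex of the graph $K'$, where $K'$ has a vertex for each edge of $G$ and an edge $\{e,e'\}$ precisely when $\dist_G(e,e') \ne 1$. Since both $\Scarf(G)$ and the clique complex of $K'$ are simplicial complexes, and the clique complex of $K'$ is determined by its $1$-skeleton together with the ``flag'' condition, the strategy splits into two parts: (i) show the $1$-skeleta agree, i.e.\ $\{e,e'\}$ is a face of $\Scarf(G)$ iff $\dist_G(e,e') \ne 1$; and (ii) show $\Scarf(G)$ is a flag complex, i.e.\ a set $\sigma$ of edges is a face of $\Scarf(G)$ iff every pair $\{e,e'\} \subseteq \sigma$ is a face. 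The ``only if'' direction of (ii) is automatic since $\Scarf(G)$ is a subcomplex of the simplex $\Taylor(G)$ and hence closed under taking subsets. So the real content is: the pairwise condition $\dist_G(e,e')\ne 1$ for all $e,e'\in\sigma$ implies $\sigma\in\Scarf(G)$ (for $\sigma$ an arbitrary subset of $E(G)$), and conversely $\sigma\in\Scarf(G)$ forces all pairwise distances to differ from $1$ (the latter being essentially \cref{l:B23} / \cref{lem.dist1B27} applied repeatedly, or provable directly).

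\textbf{The induction.} The natural approach is induction on $q = |E(G)|$, removing a leaf. Write $G = G' \cup \{vw\}$ with $v$ a leaf (a forest with at least one edge always has a leaf), so that $G'$ is a forest with $q-1$ edges and, by the inductive hypothesis, $\Scarf(G')$ is the clique complex of the corresponding $K'_{G'}$. A face $\tau \in \Taylor(G)$ either lies in $\Taylor(G')$ (contains no edge at $v$) or has the form $\tau = \sigma \cup \{vw\}$ with $\sigma = \tau|_{G'}$ — here $v$ being a leaf means $w$ is its unique neighbor, so there is only one possible edge at $v$. For faces of the first type, \cref{l:induced} (induced subgraph $G'$, noting $G'$ may acquire no isolated vertex issues only if $v,w$... but $w$ may become isolated; one handles this by instead observing directly that $\sigma\in\Scarf(G)\iff\sigma\in\Scarf(G')$ when $\sigma$ avoids $vw$, since the label of $\sigma$ and any competing label cannot involve $v$) gives $\sigma \in \Scarf(G) \iff \sigma \in \Scarf(G')$, and $\dist_G(e,e') \ne 1$ for $e,e'\in\sigma$ iff $\dist_{G'}(e,e')\ne 1$ because distances between edges not incident to the leaf $v$ are unchanged. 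This handles the first type by the inductive hypothesis directly. For faces of the second type, $\cref{t:B14}$ tells us $\sigma \cup \{vw\} \in \Scarf(G) \iff \sigma \in \Scarf(G')$ and $\dist_G(e,vw) \ne 1$ for all $e \in \sigma$. By the inductive hypothesis the condition $\sigma \in \Scarf(G')$ is equivalent to $\dist_{G'}(e,e') \ne 1$ for all $e,e' \in \sigma$, which (again since these edges avoid $v$) equals $\dist_G(e,e')\ne 1$; combined with the added condition $\dist_G(e,vw)\ne 1$ for all $e\in\sigma$, this is exactly the statement that all pairs in $\sigma\cup\{vw\}$ have $G$-distance $\ne 1$. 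That is precisely the flag/clique condition for the face $\sigma\cup\{vw\}$ of $K'_G$.

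\textbf{Base case and bookkeeping.} The base case $q = 0$ (or $q=1$) is trivial: $\Scarf$ of the zero ideal or a principal ideal is a point, and $K'$ on one vertex has empty edge set whose clique complex is a point. One should also double-check the edge case where $G$ has isolated vertices — but isolated vertices contribute nothing to $I(G)$, $E(G)$, or $K_q$, so they are irrelevant and can be discarded at the outset. A small subtlety to address carefully: when we delete the leaf $v$, the vertex $w$ might drop to degree $0$ in $G'$ if $vw$ was its only edge; then $G'$ is a forest with an isolated vertex, which is fine for the induction (treat it as the forest on its non-isolated part). One must make sure \cref{t:B14} is being applied with $v$ a genuine leaf of $G$, which it is by choice.

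\textbf{Main obstacle.} I expect the main obstacle to be purely organizational rather than deep: cleanly verifying that all edge-distances among edges of $G$ \emph{not} incident to $v$ are computed the same in $G$ and in $G'$ — this is true because $v$ is a leaf (any path realizing a distance between two such edges and passing through $v$ would have to enter and leave $v$ along the single edge $vw$, which is impossible, so $v$ is never an interior vertex of a shortest connecting path) — and then feeding this through both \cref{l:induced}/\cref{t:B14} and the inductive hypothesis without sign errors in the logic. There is no hard combinatorial lemma left to prove; the real work (\cref{l:B23}, \cref{lem.dist1B27}, \cref{lem.Distance0B25}, \cref{t:B14}) has already been done in the preceding sections, and this theorem is the payoff that packages it into a clean closed-form description for forests.
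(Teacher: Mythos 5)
Your proposal is correct and follows essentially the same route as the paper: an induction on the number of edges that removes a leaf, combining \cref{l:induced} (to pass between $\Scarf(G)$ and $\Scarf(G')$ for faces avoiding the leaf edge, and to justify restricting a Scarf face) with \cref{t:B14} (to characterize when the leaf edge can be added), yielding exactly the characterization that a set of edges is a Scarf face iff all pairwise distances differ from $1$, i.e.\ the clique-complex description. The only cosmetic differences are that the paper phrases the induction via a leaf ordering of the edges and, in one subcase, applies the hypothesis to the induced subgraph on the endpoints of $\sigma$, and it proves the "only if" direction by a direct competing-face argument, whereas you fold it into the biconditional from \cref{t:B14} together with downward closure of the Scarf complex -- all of which is sound.
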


\begin{proof} We fix an ordering $e_1, \ldots,
  e_q$ on the edges of $G$ so that each $e_i$ is a leaf edge of
        the forest whose edges are $\{ e_1, \ldots,
        e_{i}\}$.

        We use induction on $q$ to show that
        \begin{equation}\label{e:B20}
          \sigma \in \Scarf(G)
          \iff
  \sigma=\{e_{i_1},\ldots,e_{i_t}\}
        \end{equation}
        for some $i_1 <i_2< \cdots < i_t$ where
  $$\dist_G(e_{i_j},e_{i_k}) \neq 1 \qfor 1 \leq k <j \leq t.$$
The base case  $q = 1$ is trivial, since $\Scarf(G)$ will
        consist of a single point in this case.
	
	Suppose that $q \geq 2$. Assume that
        $\{e_{i_1},\ldots,e_{i_t}\}$ satisfies the condition that
	$$i_1 <i_2< \cdots < i_t \qand  \dist_G(e_{i_j},e_{i_k})
        \neq 1 \qfor 1 \leq k <j \leq t.$$ Note that if $H$ is the induced subgraph of $G$ with vertices the endpoints of the edges $e_{i_j}$, then the edges of $H$ are precisely $\{e_{i_1},\ldots,e_{i_t}\}$ since the distance between any two of these edges is not one.
        If $i_t < q$, then by
        induction, noting that the set $\{e_{i_1},\ldots,e_{i_t}\}$
        satisfies the necessary conditions relative to the induced
        subtree $H$,
        we have that $\{e_{i_1},\ldots,e_{i_t}\}$ is a face of
        $\Scarf(H) \subseteq \Scarf(G)$ by \cref{l:induced}. If $i_t =
        q$, then $e_{i_t}$ is a leaf edge of $G$, and so set $H = G
        \setminus {e_q}$ and note that
        $\{e_{i_1},\ldots,e_{i_{t-1}}\}\in \Scarf(H)$ by
        induction. Thus, by \cref{t:B14},
        $\{e_{i_1},\ldots,e_{i_t}\}\in \Scarf(G)$.
	
	Conversely, if $\{e_{i_1},\ldots,e_{i_t}\} \in \Scarf(G)$,
        then $\dist_G(e_{i_j}, e_{i_k}) \neq 1$ for all $j \neq
        k$. Indeed, to see this, assume that $\dist_G(e_{i_j}, e_{i_k})
        = 1$ for some $j\neq k$. Set $e_{i_j} = \{x,y\}$ and $e_{i_k}
        = \{a,b\}$. Without loss of generality, we may assume also
        that $\{x,a\}$ is an edge in $G$. Set $\sigma =
        \{e_{i_1},\ldots,e_{i_t}\}$ and $\tau = \sigma \cup \{x,a\}$
        if $\{x,a\} \not\in \{e_{i_1},\ldots,e_{i_t}\}$ and $\tau =
        \sigma \setminus \{x,a\}$ else. Note that
        $\m_\sigma = \m_\tau$, so $\sigma$ is not a face of
        $\Scarf(G)$.

       Now observe that the labeled graph $K_q$ is the $1$-skeleton of
       $\Taylor(G)$. By \eqref{e:B20} the faces of $\Scarf(G)$ are
       exactly the cliques of $K'$.
\end{proof}

It can be seen that, in \cref{t:tree}, maximal cliques of $K'$ correspond to facets of $\Scarf(G)$. The following example gives an instance where the Scarf complex of a tree does not support a resolution of its edge ideal.

\begin{example} \label{e:B21}
	Let $G$ be the tree with $5$ edges depicted below.
	$$
\begin{tikzpicture}
	\tikzstyle{point}=[inner sep=0pt]
	\node (a)[point,label=left:$a$] at (0,2) {};
	\node (b)[point,label=left:$b$] at (0,1) {};
	\node (c)[point,label=left:$c$] at (0,0) {};
	\node (d)[point,label=above:$d$] at (1,1) {};
	\node (e)[point,label=above:$e$] at (2,1) {};
	\node (f)[point,label=above:$f$] at (3,1) {};
	
	\draw (a.center) -- (b.center);
	\draw (b.center) -- (c.center);
	\draw (b.center) -- (d.center);
	\draw (d.center) -- (e.center);
	\draw (e.center) -- (f.center);
		
	\filldraw [black] (a.center) circle (1pt);
	\filldraw [black] (b.center) circle (1pt);
	\filldraw [black] (c.center) circle (1pt);
	\filldraw [black] (d.center) circle (1pt);
	\filldraw [black] (e.center) circle (1pt);
	\filldraw [black] (f.center) circle (1pt);
	
	\node [point,label=left:{\footnotesize $e_1$}] at (.15,1.5) {};
	\node [point,label=left:{\footnotesize $e_2$}] at (.15,.5) {};
	\node [point,label=above:{\footnotesize $e_3$}] at (.5,.9) {};
	\node [point,label=above:{\footnotesize $e_4$}] at (1.5,.9) {};
	\node [point,label=above:{\footnotesize $e_5$}] at (2.5,.9) {};
	
\end{tikzpicture}
$$	
Then $\Scarf(G)$ is the following complex:
	$$
\begin{tikzpicture}
	\tikzstyle{point}=[inner sep=0pt]
\node (a)[point] at (0,-.5) {};
\node (b)[point] at (0,2.5) {};
\node (c)[point] at (2,1) {};
\node (d)[point] at (3,1) {};
\node (e)[point] at (4.5,1) {};

\node[xshift=-.25cm] at (a) {$e_1$};
\node[xshift=-.25cm] at (b) {$e_2$};
\node[yshift=.2cm] at (c) {$e_3$};
\node[yshift=.2cm] at (d) {$e_4$};
\node[yshift=.2cm] at (e) {$e_5$};

\draw (a.center) -- (b.center);
\draw (b.center) -- (c.center);
\draw (a.center) -- (c.center);
\draw (c.center) -- (d.center);
\draw (d.center) -- (e.center);
\draw (a.center) -- (e.center);
\draw (b.center) -- (e.center);

\filldraw [black] (a.center) circle (1pt);
\filldraw [black] (b.center) circle (1pt);
\filldraw [black] (c.center) circle (1pt);
\filldraw [black] (d.center) circle (1pt);
\filldraw [black] (e.center) circle (1pt);

	\draw  [fill=gray!20] (a.center) -- (b.center) -- (c.center) -- cycle;
	
	\node [point,label=left:\rotatebox{90}{\tiny $abc$}] at (.1,1) {};
	\node [point,label=left:{\tiny $abcd$}] at (1,1) {};
	\node [point,label=above:\rotatebox{32}{\tiny $abd$}] at (.9,0.1) {};
	\node [point,label=above:\rotatebox{-32}{\tiny $bcd$}] at (.9,1.3) {};
	\node [point,label=above:{\tiny $bde$}] at (2.5,.57) {};
	\node [point,label=above:{\tiny $def$}] at (3.5,.57) {};
	\node [point,label=above:\rotatebox{-12}{\tiny $bcef$}] at (2.2,1.6) {};
	\node [point,label=below:\rotatebox{12}{\tiny $abef$}] at (2.2,.35) {};
\end{tikzpicture}
$$	
where the triangle with vertices $e_1,e_2,e_5$ is also included.

Observe that $\Scarf(G)$ is not acyclic, since for example $e_1, e_3,e_4,e_5$ forms a non-trivial cycle. Thus by \cite{BPS}, $\Scarf(G)$ does not support a resolution of $I(G)$.
\end{example}

\begin{example}
Let $G$ be the tree depicted below, whose edges are $ab,bc,bd,de,df,dg$.
	$$
\begin{tikzpicture}
	\tikzstyle{point}=[inner sep=0pt]
	\node (a)[point,label=left:$a$] at (0,1) {};
	\node (b)[point,label=below:$b$] at (1,1) {};
	\node (c)[point,label=left:$c$] at (1,2) {};
	\node (d)[point,label=above left:$d$] at (2,1) {};
	\node (e)[point,label=above:$e$] at (2,2) {};
	\node (f)[point,label=left:$f$] at (2,0) {};
	\node (g)[point,label=above:$g$] at (3,1) {};
	
	\draw (a.center) -- (b.center);
	\draw (b.center) -- (c.center);
	\draw (b.center) -- (d.center);
	\draw (d.center) -- (e.center);
	\draw (d.center) -- (f.center);
	\draw (d.center) -- (g.center);
			
	\filldraw [black] (a.center) circle (1pt);
	\filldraw [black] (b.center) circle (1pt);
	\filldraw [black] (c.center) circle (1pt);
	\filldraw [black] (d.center) circle (1pt);
	\filldraw [black] (e.center) circle (1pt);
	\filldraw [black] (f.center) circle (1pt);
	\filldraw [black] (g.center) circle (1pt);
	
\end{tikzpicture}
$$
Then, by \cref{t:tree}, starting with the complete graph on vertices $\{ ab, bc, bd,de,df,dg\}$ that form the $1$-skeleton of the Taylor simplex, edges $\{ab, de\}, \{ab, df\}
, \{ab,dg\},\{bc,de\}, \{bc,df\},\{bc,dg\}$ are removed, leaving the graph:
	$$
\begin{tikzpicture}
	\tikzstyle{point}=[inner sep=0pt]
	\node (ab)[point,label=left:$ab$] at (3,2) {};
	\node (bc)[point,label=below:$bc$] at (3,1) {};
	\node (bd)[point,label=below:$bd$] at (2,0) {};
	\node (de)[point,label=above left:$de$] at (-.2,0) {};
	\node (df)[point,label=left:$df$] at (.5,1) {};
	\node (dg)[point,label=left:$dg$] at (-.2,2) {};

	\draw (ab.center) -- (bc.center);
	\draw (bc.center) -- (bd.center);
	\draw (ab.center) -- (bd.center);
	\draw (dg.center) -- (bd.center);
	\draw (dg.center) -- (df.center);
	\draw (dg.center) -- (de.center);
	\draw (df.center) -- (de.center);
	\draw (df.center) -- (bd.center);	
	\draw (de.center) -- (bd.center);
		
	\filldraw [black] (ab.center) circle (1pt);
	\filldraw [black] (bc.center) circle (1pt);
	\filldraw [black] (bd.center) circle (1pt);
	\filldraw [black] (de.center) circle (1pt);
	\filldraw [black] (df.center) circle (1pt);
	\filldraw [black] (dg.center) circle (1pt);
	
\end{tikzpicture}
$$
The maximal cliques of this graph are the tetrahedron $\{bd,de,df,dg\}$ and the triangle $\{ab,bc,bd\}$, which are the facets of the Scarf complex of the tree. Note that this complex is acyclic and supports a minimal resolution of $I(G)$. 
\end{example}

\section{Graphs which are Scarf} \label{s:ScarfGraphs}

This section is devoted to the first part of our \say{Beautiful Oberwolfach Theorem}, when the power $t$ is 1. Particularly, we shall characterize all graphs whose Scarf complexes support a resolution of their edge ideals.

We start with a lemma identifying \say{forbidden} subgraph structures that prevent a graph from being Scarf.

\begin{lemma}\label{lem.lem1B36}
Let $G$ be a graph. Suppose that $G$ contains an induced subgraph $H$, which
  belongs to $\{C_3, C_4, C_5, P_4\}$. Then, $G$ is not Scarf.
\end{lemma}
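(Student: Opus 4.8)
The plan is to show that for each $H \in \{C_3, C_4, C_5, P_4\}$, the Scarf complex $\Scarf(H)$ fails to be acyclic, or fails to satisfy the acyclicity criterion of \cref{t:BPS}, hence $I(H)$ is not Scarf; then use \cref{l:induced} to push this obstruction up to $G$. Concretely, by \cref{l:induced}(3), if $H$ is an induced subgraph of $G$ (with no isolated vertices, which is automatic here since $C_3,C_4,C_5,P_4$ have no isolated vertices), then $\Scarf(I(H)) = \Scarf(I(G))_{(m_H)}$ is an induced subcomplex of $\Scarf(I(G))$ cut out by the monomial $m_H$. So if $\Scarf(I(H))$ is nonempty and not acyclic, then $\Scarf(I(G))_{(m_H)}$ is nonempty and not acyclic, and $m_H \in \LCM(I(G))$ by \cref{l:induced}(2); by \cref{t:BPS} this prevents $\Scarf(I(G))$ from supporting any resolution of $I(G)$, so $G$ is not Scarf. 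Thus the entire lemma reduces to four explicit finite computations.

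Next I would carry out those four computations. For $H = C_3$, with $I = (xy, yz, zx)$, all three pairwise lcm's equal $xyz$ and the triple lcm is also $xyz$, so no face of dimension $\ge 1$ of $\Taylor(C_3)$ has a unique label; hence $\Scarf(C_3)$ consists of three isolated vertices, which is nonempty and disconnected, hence not acyclic. For $H = C_4$, \cref{e:C4} already records that $\Scarf(C_4)$ is a $4$-cycle (the four vertices $xy, yz, zw, xw$ with four edges and no $2$-face, since e.g. $\lcm(xy,yz,zw) = xyzw = \lcm(xy,zw)$ is a repeated label), and a $4$-cycle has $\tilde H_1 \neq 0$, so it is not acyclic. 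For $H = P_4$, the path $a-b-c-d-e$ with $I = (ab, bc, cd, de)$: one checks that $\lcm(ab,cd) = abcd = \lcm(ab,bc,cd)$ and $\lcm(bc,de) = bcde = \lcm(bc,cd,de)$ are repeated, while (by \cref{t:tree}, or directly) $\Scarf(P_4)$ is the clique complex of $K_4$ with the two edges $\{ab,cd\}$ and $\{bc,de\}$ removed — this is a $4$-cycle on $ab, bc, de, cd$ with a chord $\{ab, de\}$, i.e.\ two triangles glued along an edge, which is acyclic. Hmm — so $P_4$ needs the second clause of \cref{t:BPS} instead: one exhibits a pair $\sigma \subsetneq \tau$ in $\Scarf(P_4)$ with $\m_\sigma = \m_\tau$ — but that cannot happen in a Scarf complex by construction. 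So the right argument for $P_4$ must be that some proper $\LCM$-monomial $m$ gives $\Scarf(P_4)_m$ not acyclic: taking $m = abcde$ itself is the whole complex (acyclic); but taking $m = abcd$, the induced subcomplex on vertices dividing $abcd$ is $\{ab, bc, cd\}$ with only the edges $\{ab,bc\}$ and $\{bc,cd\}$ (the edge $\{ab,cd\}$ is absent), a path — acyclic too. I would instead verify directly that $\Scarf(P_4)$ does \emph{not} support a resolution by computing ranks/Betti numbers: $I(P_4)$ has projective dimension $3$ with $\beta_3 = 1$ supported in degree $abcde$, whereas $\Scarf(P_4)$ as described has no $3$-face, so it cannot account for $\beta_3$; equivalently $\Scarf(P_4)_{abcde} = \Scarf(P_4)$ has the wrong homology in degree $2$. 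The cleanest route is: $P_4$ contains $C_4$? No. So for $P_4$ I will simply compute $\Scarf(P_4)$ explicitly (two triangles sharing the edge $\{ab, ef\}$ — wait, relabel: sharing edge $\{ab, de\}$... ) and check against \cref{t:BPS} that for $m = \lcm$ of all generators the complex must be acyclic of the right homological size; the Betti-number mismatch in top degree is the obstruction. Finally $H = C_5$, $I = (x_1x_2, x_2x_3, x_3x_4, x_4x_5, x_5x_1)$: the known minimal free resolution of a $5$-cycle has $\beta_2 \neq 0$ in two distinct multidegrees sharing... actually $I(C_5)$ has $\pd = 3$; and $\Scarf(C_5)$ — computed either directly or by \cref{t:B29} — turns out to be a $5$-cycle (pentagon) with no $2$-faces, which has $\tilde H_1 \neq 0$, hence is not acyclic.

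The main obstacle is the $P_4$ (and to a lesser extent $C_5$) case: unlike $C_3, C_4$, the graph $P_4$ is itself a (gap-containing) tree, so its Scarf complex, while it exists and is even acyclic, is simply \emph{too small} — it is missing a top-dimensional face that the minimal resolution requires. Pinning this down rigorously means either (a) invoking \cref{t:tree} to describe $\Scarf(P_4)$ and $\Scarf(C_5)$ precisely and then comparing Euler characteristics / reduced homology against the known Betti numbers of $I(P_4)$ and $I(C_5)$, or (b) finding an explicit $m \in \LCM(I)$ for which $\Scarf(I)_m$ is non-acyclic and applying \cref{t:BPS}. I expect option (b) to be the slickest: for $C_5$ one can take $m = x_1x_2x_3x_4x_5$ and check $\Scarf(C_5)_m = \Scarf(C_5)$ is a pentagon (non-acyclic); for $P_4$ one shows that although $\Scarf(P_4)$ is acyclic, it does not have enough faces — there is no $m$ with $\Scarf(P_4)_m$ non-acyclic, so instead one directly compares $\beta_3(I(P_4)) = 1 \neq 0 = $ (number of $3$-faces of $\Scarf(P_4)$), contradicting that $\Scarf(P_4)$ supports a resolution. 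I would present the four cases as a short case analysis, citing \cref{e:C4}, \cref{t:tree}, \cref{l:induced} and \cref{t:BPS}, and flag that $P_4$ is the case requiring the Betti-number (rather than acyclicity) part of the argument.
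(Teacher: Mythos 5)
Your overall framework is exactly the paper's: show each of $C_3, C_4, C_5, P_4$ has a non-acyclic Scarf complex, then use \cref{l:induced} to identify $\Scarf(G)_{m_H}=\Scarf(H)$ and invoke \cref{t:BPS}. The cases $C_3$, $C_4$, $C_5$ are handled as in the paper. The genuine gap is your $P_4$ case: you miscompute $\Scarf(P_4)$. With edges $ab,bc,cd,de$, the pairs at distance $1$ are $\{ab,cd\}$ and $\{bc,de\}$, so by \cref{t:tree} the graph $K'$ has exactly the four edges $\{ab,bc\},\{bc,cd\},\{cd,de\},\{de,ab\}$; the pair $\{ab,de\}$ (distance $2$) is one of these four cycle edges, not a chord, and $K'$ contains no triangle. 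Hence $\Scarf(P_4)$ is a hollow $4$-cycle (this is the paper's statement that it is \say{$C_4$}), which has nontrivial $\tilde H_1$, and the same one-line argument as for $C_4$ and $C_5$ applies with $m=abcde$. Your claim that it is \say{a $4$-cycle with a chord, i.e.\ two triangles glued along an edge, hence acyclic} is false, and it is this error that sends you into the inconclusive fallback.

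That fallback, as written, does not close the gap. Your assertion that \say{there is no $m$ with $\Scarf(P_4)_m$ non-acyclic} is wrong ($m=abcde$ gives the whole non-acyclic complex), and the Betti-number sketch is both unestablished and numerically off: $\pd(I(P_4))=2$, not $3$ (forests are sequentially Cohen--Macaulay, so $\pd(S/I)$ equals the big height, which is $3$ here), and the face the Scarf complex is missing sits in homological degree $2$ at multidegree $abcde$, precisely because the $1$-cycle of the hollow square must be killed in the minimal resolution. Once the computation of $\Scarf(P_4)$ is corrected, all four cases are uniform and none of this extra machinery is needed.
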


\begin{proof}
	Let $m_H$ be the product of the vertices of $H$.  If $H$ is
        $C_3$, $C_4$, $C_5$ or $P_4$, then a direct computation as in
        \cref{e:C4} shows that $\Scarf(H)$ is $3$ isolated vertices,
        $C_4$, $C_5$, or $C_4$, respectively. Each of these latter
        complexes has nontrivial homology, and so $H$ is not
        Scarf. \cref{l:induced} now implies that $\Scarf(G)_{m_H}=\Scarf(H)$ is not acyclic. Hence, $G$ is not Scarf by \cref{t:BPS}.
\end{proof}

The next lemma gives a better understanding of graphs without the forbidden subgraphs listed in \cref{lem.lem1B36}.

\begin{lemma}\label{lem.lem2B36}
Let $G$ be a graph. The following are equivalent:
  \begin{enumerate}
  \item $G$ is a gap-free forest;
  \item $G$ is does not
        contain an induced subgraph isomorphic to one of $C_3$,
        $C_4$, $C_5$, or $P_4$.
  \end{enumerate}
\end{lemma}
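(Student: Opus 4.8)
The plan is to prove this equivalence directly and combinatorially, without reference to Scarf complexes, establishing the two implications separately. The implication $(1)\Rightarrow(2)$ is straightforward: if $G$ is a forest it is acyclic, so it cannot contain $C_3$, $C_4$ or $C_5$ even as a (not necessarily induced) subgraph, and thus the only thing to verify is that a gap-free forest has no induced $P_4$ --- but in an induced path $v_0v_1v_2v_3v_4$ the edges $v_0v_1$ and $v_3v_4$ are disjoint, lie in the same connected component (they are joined through the path itself), and, because the path is induced, no edge of $G$ joins $\{v_0,v_1\}$ to $\{v_3,v_4\}$; that is precisely a gap, contradicting $(1)$.

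For $(2)\Rightarrow(1)$ I would first show that $G$ is a forest, and then that it is gap-free. If $G$ had a cycle it would contain a shortest cycle, which is necessarily chordless and hence an induced cycle $C_n$ with $n\ge 3$; for $n\in\{3,4,5\}$ this is already one of the forbidden subgraphs, while for $n\ge 6$ any five consecutive vertices of $C_n$ span exactly the four consecutive edges among them (the cycle is chordless, and $n\ge 6$ excludes the wrap-around edge), so they induce a $P_4$ --- again forbidden. Hence $G$ is a forest. Now suppose $G$ is not gap-free: there are disjoint edges $ab$ and $cd$ in a single connected component $T$ (a tree) with no edge of $G$ between $\{a,b\}$ and $\{c,d\}$. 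Choose $p\in\{a,b\}$ and $q\in\{c,d\}$ with $\dist_T(p,q)$ minimal, and let $p',q'$ be the other two endpoints. Minimality forces the unique path from $p$ to $q$ in $T$ to avoid both $p'$ and $q'$, so prepending the edge $p'p$ and appending the edge $qq'$ yields an honest (repetition-free) path in $T$ of length $\dist_T(p,q)+2\ge 4$; here $\dist_T(p,q)\ge 2$ since $\dist_T(p,q)\le 1$ would give a shared vertex or a forbidden connecting edge. Every path inside a tree is chordless, so this path is an induced subgraph of $G$, and any five of its consecutive vertices form an induced $P_4$ --- a contradiction.

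The one step that needs a little care is the last one: one must be certain that the concatenated path $p'\,p\cdots q\,q'$ has no repeated vertex, which is exactly what the minimal-distance choice of $(p,q)$ buys us (if, say, $p'$ were to lie on the path from $p$ to $q$, then $\dist_T(p',q)=\dist_T(p,q)-1$, contradicting minimality). Everything else --- a shortest cycle is chordless, a path in a tree has no chords, a subpath of an induced path is induced --- is routine, and I would merely invoke it rather than belabor it.
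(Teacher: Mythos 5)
Your proof is correct, and its overall decomposition mirrors the paper's: the implication $(1)\Rightarrow(2)$ via reading a gap off the end-edges of an induced $P_4$, and $(2)\Rightarrow(1)$ by splitting into the two failure modes (a cycle, handled through a shortest --- hence chordless --- cycle, with $n\ge 6$ yielding an induced $P_4$; or a gap, handled by joining the two gap edges by a path). The genuine divergence is in the gap case. The paper argues in an arbitrary graph: from two edges at distance at least $2$ it produces a subgraph $H$ isomorphic to $P_4$ and, if $H$ is not induced, extracts from its five vertices an induced cycle of length at most $5$, again contradicting (2); it does not need to know $G$ is a forest at that point. You instead first establish that $G$ is a forest and then exploit the tree structure: choosing $p\in\{a,b\}$ and $q\in\{c,d\}$ at minimal distance makes the concatenated path $p'p\cdots qq'$ repetition-free, and paths in trees are automatically chordless, so you get an induced path of length at least $4$ and hence an induced $P_4$ with no chord analysis at all. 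Your version uses the forest conclusion as an input to the gap argument (which is legitimate, since you prove it first), and it is more explicit at exactly the point the paper leaves to the reader --- that the connecting walk has no repeated vertices --- whereas the paper's chord-to-short-cycle step would absorb any such degeneracy. Both arguments are sound; the trade-off is your cleaner ``induced for free'' path versus the paper's argument being independent of forestness.
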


\begin{proof} Assume $G$ is a gap-free forest. Since $G$ is a forest it
  cannot contain an induced cycle. Since $G$ is gap-free, it does
  contain any pair of edges of distance $2$, and thus cannot contain
  an induced $P_4$.

	For the converse, assume $G$ is not a gap-free forest. Then
        either $G$ is not a forest or $G$ is not gap-free. Assume first
        that $G$ is not a forest. Then $G$ contains an induced cycle
        $C$. If the size of $C$ is at most $5$ then $C$ belongs to
        $\{C_3, C_4, C_5\}$. On the other hand, if the size of $C$ is
        greater than or equal to $6$ then $C$ contains an induced
        subgraph which is isomorphic to $P_4$, and thus so does $G$.

        Now assume $G$ is not gap-free. Then there are two edges $e_1$
        and $e_2$ in the same connected component of $G$ whose induced subgraph does not contain any
        additional edges. It follows that the distance between
        $e_1$ and $e_2$ is at least $2$. So $G$ contains a subgraph
        $H$ isomorphic to $P_4$. If $H$ is not an induced subgraph,
        then the induced subgraph on the vertices of $H$ contains an
        induced cycle of length at most $5$,  contradicting~(2).
        \end{proof}

We are now ready to state the first part of our main result, the \say{Beautiful Oberwolfach Theorem}.

\begin{theorem}[{\bf Scarf Graphs are Gap-Free Forests}]\label{t:B36}
Let $G$ be a graph. The edge ideal of $G$ has a Scarf resolution if and
  only if $G$ is a gap-free forest.
\end{theorem}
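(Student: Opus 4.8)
The plan is to prove both directions by combining the two lemmas immediately preceding the statement with the structural results about forests already established.

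For the forward direction, suppose $I(G)$ has a Scarf resolution, i.e. $G$ is Scarf. I would argue by contradiction: if $G$ is \emph{not} a gap-free forest, then by \cref{lem.lem2B36} $G$ contains an induced subgraph isomorphic to one of $C_3, C_4, C_5, P_4$. But \cref{lem.lem1B36} says precisely that any graph containing such an induced subgraph is not Scarf — contradiction. So $G$ must be a gap-free forest. This half is essentially immediate once the two lemmas are in place.

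For the converse, suppose $G$ is a gap-free forest; I must show $\Scarf(G)$ supports a (minimal) free resolution of $I(G)$, which by \cref{t:BPS} amounts to checking that for every $\m \in \LCM(I(G))$ the induced subcomplex $\Scarf(G)_\m$ is empty or acyclic. Here I would invoke the explicit description of $\Scarf(G)$ for forests from \cref{t:tree}: $\Scarf(G)$ is the clique complex of the graph $K'$ on the edges of $G$, where two edges are joined exactly when their distance in $G$ is not $1$. Because $G$ is \emph{gap-free}, within each connected component any two distinct edges either share a vertex (distance $0$) or have distance $1$ — the induced matching number is $1$, so distance $\ge 2$ never occurs inside a component. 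Hence in $K'$, the edges within one component of $G$ are joined precisely when they share a vertex, and edges in different components are always joined. The connectivity/gap-free hypothesis should force the clique complex to be a join of small, contractible pieces (one per component), so $\Scarf(G)$ itself is contractible, and more importantly every induced subcomplex $\Scarf(G)_\m$ inherits the same structure and is contractible or empty. I would also need the minimality clause of \cref{t:BPS}, which is automatic for Scarf complexes (as remarked after the definition of Scarf ideals).

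The main obstacle is the converse, and specifically verifying acyclicity of \emph{every} $\Scarf(G)_\m$, not just of $\Scarf(G)$ itself — one must understand which sub-collections of edges arise as $\{e : \m_e \mid \m\}$ and check their clique complexes are acyclic. I expect the cleanest route is to reduce, via \cref{l:join} and the join description of $\Scarf$ for disjoint unions, to the connected case, and then to show that for a connected gap-free forest (a "star-like" tree, since gap-free plus forest is very restrictive — essentially a star, or a star with one subdivided edge, i.e. contained in the $P_2$/claw-with-a-pendant picture) the Scarf complex and all its restrictions are cones, hence contractible. Pinning down exactly which trees are connected gap-free forests and confirming the cone structure of each $\Scarf(G)_\m$ is where the real work lies; alternatively one could run the recursive leaf-removal result \cref{t:B14} to build $\Scarf(G)$ and track acyclicity inductively.
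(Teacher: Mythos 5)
Your proposal is correct in outline and follows essentially the same route as the paper: the forward direction is exactly the combination of \cref{lem.lem2B36} and \cref{lem.lem1B36}, and the converse is handled, as in the paper, by classifying connected gap-free forests, describing their Scarf complexes via \cref{t:tree}, reducing to the connected case with \cref{l:join}, and applying \cref{t:BPS}. One small correction to the part you flagged as remaining work: the connected gap-free trees are the stars and the double stars (an edge $bc$ with arbitrarily many leaves attached to each of $b$ and $c$), not just stars with one subdivided edge, and acyclicity of every $\Scarf(G)_\m$ then follows at once because in the double-star case every vertex label is divisible by $b$ or $c$, so each restriction is either a single simplex or two simplices glued at the vertex $bc$.
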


\begin{proof} If $G$ is not a gap-free forest, then by
  \cref{lem.lem2B36} and \cref{lem.lem1B36}, $G$ is not Scarf. 
  
  Conversely, suppose that $G$ is a gap-free forest. Let $H$ be a connected component of $G$. By \cref{lem.lem2B36}, $H$ does not contain any induced
        $P_4$. Thus, $H$ is a tree of one of the following forms:

        \begin{center}
\begin{tabular}{ccc}	
	\begin{tikzpicture}
		\tikzstyle{point}=[inner sep=0pt]
		\node (a_1)[point,label=left:$a_1$] at (1,2.5) {};
		\node (b)[point,label=above:$b$] at (2,1.5) {};
		\node (a)[point] at (1,1.7) {};
		\node (a')[point] at (1,1.5) {};
		\node (a'')[point] at (1,1.3) {};
		\node (a_2)[point,label=left:$a_2$] at (1,2) {};
		\node (a_n)[point,label=left:$a_n$] at (1,.8) {};
		\node (d_1)[point,label=right:$d_1$] at (3,2.5) {};
		\node (d_2)[point,label=right:$d_2$] at (3,2) {};
		\node (d)[point] at (3,1.7) {};
		\node (d')[point] at (3,1.5) {};
		\node (d'')[point] at (3,1.3) {};
		\node (d_m)[point,label=right:$d_m$] at (3,.8) {};
		
		\draw (a_1.center) -- (b.center);
		\draw (a_2.center) -- (b.center);
		\draw (a_n.center) -- (b.center);
		\draw (a.center) -- (b.center);
		\draw (a'.center) -- (b.center);
		\draw (a''.center) -- (b.center);
			\draw (d_1.center) -- (b.center);
	\draw (d_2.center) -- (b.center);
	\draw (d_m.center) -- (b.center);
	\draw (d.center) -- (b.center);
	\draw (d'.center) -- (b.center);
	\draw (d''.center) -- (b.center);		
		\filldraw [black] (a.center) circle (1pt);
		\filldraw [black] (a'.center) circle (1pt);
		\filldraw [black] (a''.center) circle (1pt);
		\filldraw [black] (a_1.center) circle (1pt);
		\filldraw [black] (a_2.center) circle (1pt);
		\filldraw [black] (a_n.center) circle (1pt);
		\filldraw [black] (b.center) circle (1pt);
		\filldraw [black] (c.center) circle (1pt);
		\filldraw [black] (d_1.center) circle (1pt);
\filldraw [black] (d_2.center) circle (1pt);
\filldraw [black] (d_m.center) circle (1pt);
		\filldraw [black] (d.center) circle (1pt);
		\filldraw [black] (d'.center) circle (1pt);
		\filldraw [black] (d''.center) circle (1pt);
	\end{tikzpicture}
	&\quad \quad&
	\begin{tikzpicture}
		\tikzstyle{point}=[inner sep=0pt]
		\node (a_1)[point,label=left:$a_1$] at (1,2.5) {};
		\node (b)[point,label=above:$b$] at (2,1.5) {};
		\node (a)[point] at (1,1.7) {};
		\node (a')[point] at (1,1.5) {};
		\node (a'')[point] at (1,1.3) {};
		\node (a_2)[point,label=left:$a_2$] at (1,2) {};
		\node (a_n)[point,label=left:$a_n$] at (1,.8) {};
		\node (c)[point,label=above:$c$] at (3,1.5) {};
		\node (d_1)[point,label=right:$d_1$] at (4,2.5) {};
		\node (d_2)[point,label=right:$d_2$] at (4,2) {};
		\node (d)[point] at (4,1.7) {};
		\node (d')[point] at (4,1.5) {};
		\node (d'')[point] at (4,1.3) {};
		\node (d_m)[point,label=right:$d_m$] at (4,.8) {};
		
		\draw (a_1.center) -- (b.center);
		\draw (a_2.center) -- (b.center);
		\draw (a_n.center) -- (b.center);
		\draw (a.center) -- (b.center);
		\draw (a'.center) -- (b.center);
		\draw (a''.center) -- (b.center);
			\draw (d_1.center) -- (c.center);
	\draw (d_2.center) -- (c.center);
	\draw (d_m.center) -- (c.center);
	\draw (d.center) -- (c.center);
	\draw (d'.center) -- (c.center);
	\draw (d''.center) -- (c.center);
	\draw (b.center) -- (c.center);
		
		\filldraw [black] (a.center) circle (1pt);
		\filldraw [black] (a'.center) circle (1pt);
		\filldraw [black] (a''.center) circle (1pt);
		\filldraw [black] (a_1.center) circle (1pt);
		\filldraw [black] (a_2.center) circle (1pt);
		\filldraw [black] (a_n.center) circle (1pt);
		\filldraw [black] (b.center) circle (1pt);
		\filldraw [black] (c.center) circle (1pt);
		\filldraw [black] (d_1.center) circle (1pt);
\filldraw [black] (d_2.center) circle (1pt);
\filldraw [black] (d_m.center) circle (1pt);
		\filldraw [black] (d.center) circle (1pt);
		\filldraw [black] (d'.center) circle (1pt);
		\filldraw [black] (d''.center) circle (1pt);
		
	\end{tikzpicture}
\end{tabular}
        \end{center}
where $m \geq 0$ and $n \geq 0$. 

By \cref{t:tree}, the Scarf
        complex of a tree of the first form is a simplex on $(n+m)$ vertices and the Scarf complex of a tree of the second form is two simplices of
        sizes $(n+1)$ and $(m+1)$ joined at the common vertex $bc$,
        respectively. Note that every induced subcomplex of a simplex is again a simplex. In the second form of $\Scarf(H)$, every label in the first simplex is divisible by $b$ and every label in the second is divisible by $c$. If $\bm$ is in the LCM lattice and $bc$ divides $\bm$, then $\Delta_{\bm}$ is again two (sub)simplices joined at a point labeled $bc$, which is collapsible to a point and and hence acyclic. If $bc$ does not divide $\bm$, then without loss of generality, $b$ does not divide $\bm$. In this case $\Delta_{\bm}$ is a subsimplex of the simplex where each vertex is divisible by $c$.  Thus in both of these cases, $\Scarf(H)$ restricted to any label is acyclic.
        
        It now follows from \cref{l:join} that the same is true for $\Scarf(G)$. Therefore, $\Scarf(G)$ supports the minimal free resolution of $I(G)$ by \cref{t:BPS}. Hence $G$ is Scarf.
\end{proof}

\begin{remark} It is known that the regularity of $I(G)$, where $G$ is a forest, is the same as its induced matching number plus 1 (cf. \cite[Theorem 2.18]{Zheng} and \cite[Corollary 3.11]{HVT}). Also, a connected gap-free graph has induced matching number 1. Thus, if $I(G)$ has a Scarf resolution then the regularity of $I(G)$ is equal to the number of connected components of $G$ plus 1.
\end{remark}

\section{The Scarf complex of powers of edge ideals}
\label{s:Scarf-I2}

This section addresses the second part of our \say{Beautiful
  Oberwolfach Theorem}, when the power $t$ is at least
$2$. Specifically, we shall characterize graphs for which powers of
their edge ideals are Scarf.

Our investigation begins with a lemma identifying edges that do not
appear in $\Scarf(I(G))^t$, for a graph $G$ and a positive integer
$t$.  Below, for a monomial $\m$ and a variable $x$, we denote by
$\deg_x(\m)$ the highest power of $x$ appearing in $\m$.

\begin{lemma}[{\bf Non-Scarf edges}]\label{l:non-scarf-edge}
  Let $G$ be a graph with edge ideal $I = I(G)$, and let $t$ a positive
  integer, $e,e' \in \Gens(I)$, and $\bar{\bm},\bar{\bm}' \in
        \Gens(I^{t-1})$ such that $$\bm=e \cdot \bar{\bm} \qand \bm'=e' \cdot \bar{\bm}'.$$ Then in either of the following cases $\{\bm, \bm'\}\notin \Scarf(I^t)$.

   \begin{enumerate}
   \item \label{i:1} If $e$ and $e'$ are
     two distinct edges of a triangle in $G$ and $\bar{\bm} =
     \bar{\bm}'$.
\item \label{i:2} If $e=ab$, $e'=cd$, $bc \in \Gens(I)$, and
  \begin{align}
    & \deg_b(\bm') < \deg_b(\bm) \qand bc \cdot \bar{\bm}' \not= \bm, \qor \label{e:deg1} \\
    & \deg_c(\bm) <  \deg_c(\bm') \qand bc \cdot \bar{\bm} \not= \bm'. \label{e:deg2}
\end{align}
       \end{enumerate}
\end{lemma}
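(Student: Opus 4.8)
The plan is to use the defining property of the Scarf complex: a face of $\Taylor(I^t)$ lies in $\Scarf(I^t)$ exactly when its monomial label is not repeated in $\Taylor(I^t)$. So in each case it suffices to produce a single minimal generator $\m''$ of $I^t$, distinct from both $\bm$ and $\bm'$, with $\m'' \mid \lcm(\bm,\bm')$: then $\{\bm,\bm',\m''\}$ is a face of $\Taylor(I^t)$ different from $\{\bm,\bm'\}$ but carrying the same label $\lcm(\bm,\bm',\m'') = \lcm(\bm,\bm')$, and hence $\{\bm,\bm'\} \notin \Scarf(I^t)$. Two preliminary observations streamline this. First, since $I$ is generated in degree $2$, a minimal generator of $I^s$ is precisely a product of $s$ edges of $G$: all such products, and all minimal generators, have degree $2s$, and a monomial of degree $2s$ dividing another of degree $2s$ must equal it. In particular $\bar{\bm},\bar{\bm}'$ are products of $t-1$ edges, so $\bm = e\bar{\bm}$, $\bm' = e'\bar{\bm}'$, and any candidate $\m''$ built as a product of $t$ edges automatically lie in $\Gens(I^t)$, and we never have to argue minimality. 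Second, in each case the hypotheses force $\bm \ne \bm'$ (in (1), because $e \ne e'$ and $\bar{\bm} = \bar{\bm}'$; in (2), because the exponent of $b$ or of $c$ strictly separates $\bm$ and $\bm'$), so $\{\bm,\bm'\}$ is a genuine edge of $\Taylor(I^t)$.

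For (1), after relabeling the triangle we may take $e = ab$, $e' = bc$, with third edge $f = ac$ and $a,b,c$ distinct. Set $\m'' = f \cdot \bar{\bm} = ac\,\bar{\bm}$. Then $\lcm(\bm,\bm') = \bar{\bm}\cdot\lcm(ab,bc) = \bar{\bm}\cdot abc$ and $ac \mid abc$, so $\m'' \mid \lcm(\bm,\bm')$; and comparing the exponent of $b$, which is $\deg_b(\bar{\bm})$ in $\m''$ but $1+\deg_b(\bar{\bm})$ in each of $\bm,\bm'$, shows $\m'' \notin \{\bm,\bm'\}$. This settles (1).

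For (2), subcase \eqref{e:deg1}, set $\m'' = bc \cdot \bar{\bm}'$; subcase \eqref{e:deg2} is symmetric with $\m'' = bc\cdot\bar{\bm}$. The crux is the variable-by-variable check that $\m'' \mid \lcm(\bm,\bm')$: for a variable $x \ne b$ one has $\deg_x(bc) \le \deg_x(cd)$, hence $\deg_x(\m'') \le \deg_x(\bm')$; and for $x = b$ one has $\deg_b(\m'') = 1 + \deg_b(\bar{\bm}') = 1 + \deg_b(\bm')$, which is $\le \deg_b(\bm)$ \emph{precisely because of} the hypothesis $\deg_b(\bm') < \deg_b(\bm)$ in \eqref{e:deg1}. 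That $\m'' \ne \bm$ is the other half of \eqref{e:deg1}, namely $bc\,\bar{\bm}' \ne \bm$, and $\m'' \ne \bm'$ holds because $bc \ne cd$; so $\{\bm,\bm'\}\notin\Scarf(I^t)$ as required.

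The step I expect to be the main obstacle is the last distinctness check, $\m'' \notin\{\bm,\bm'\}$, in the configurations where $a,b,c,d$ are not all distinct — in particular when the bridging edge $bc$ coincides with $e$ (i.e.\ $a = c$) or with $e'$ (i.e.\ $b = d$), so that $bc\,\bar{\bm}'$ collapses onto $\bm'$. In the applications $bc$ is always a third edge, distinct from $e$ and $e'$, so $a \ne c$ and $b \ne d$ and the argument above goes through verbatim; in the remaining degenerate configurations one instead builds $\m''$ by pairing $bc$ with a $(t-1)$-fold product of edges extracted from whichever of $\bm,\bm'$ carries the extra factor of the separating variable — using that that variable divides the corresponding cofactor, hence that cofactor contains an incident edge — and reruns the same exponent comparison. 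Everything else is a direct degree computation.
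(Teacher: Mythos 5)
Your proof is correct and takes essentially the same route as the paper's: in case (1) you exhibit $ac\cdot\bar{\bm}$, and in case (2) you exhibit $bc\cdot\bar{\bm}'$ (resp.\ $bc\cdot\bar{\bm}$) as a third minimal generator dividing $\lcm(\bm,\bm')$, which is exactly the paper's argument, with the same variable-by-variable divisibility check. The degenerate configurations you flag at the end (e.g.\ $b=d$ or $a=c$) are likewise passed over in the paper's proof, which tacitly treats $ab$, $bc$, $cd$ as distinct edges of a path configuration --- the only setting in which the lemma is later applied --- so your main argument matches the paper's intended reading.
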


\begin{proof}

  \eqref{i:1} Suppose that $e=ab$ and $e'=bc$. Since $e$ and $e'$ are edges of
  a triangle in $G$, the edge $ac$ belongs to $G$. We write
  $\bm=ab\cdot \bar{\bm}$ and $\bm'=bc \cdot \bar{\bm}$. Then we
  have $$\lcm(\bm,\bm')=abc\cdot \bar{\bm}=\lcm(\bm,\bm',ac \cdot \bar{\bm})$$
  which shows that $\{\bm,\bm'\}$ shares a label with another face of
  $\Taylor(I^t)$, and is therefore not a face of $\Scarf(I^t)$.

  \eqref{i:2} Note that
  $$c \cdot \bar{\bm}' \mid \bm' \mid \lcm(\bm,\bm')
  \qand
  b \cdot \bar{\bm} \mid \bm \mid \lcm(\bm,\bm') .$$
  Now by \eqref{e:deg1} and \eqref{e:deg2} we have
  $$\deg_b(bc \cdot \bar{\bm}') = \deg_b(\bm')+1 \leq \deg_b(\bm)
  \qor  \deg_c(bc \cdot \bar{\bm}) = \deg_c(\bm)+1 \leq \deg_c(\bm').$$
  Therefore
  $$bc \cdot \bar{\bm}' \mid \lcm(\bm,\bm') \qor
  bc \cdot \bar{\bm} \mid \lcm(\bm,\bm').$$
  In these cases, we also have that $bc \cdot \bar{\bm}$ or $bc \cdot \bar{\bm}'$, respectively, does not belong to $\{\bm, \bm'\}$. Hence, $\{\bm,\bm'\} \notin \Scarf(I^t)$.
\end{proof}

Our main result is established by explicitly constructing the Scarf complexes of $I(G)^t$ for special classes of graphs $G$, namely, triangles, paths, squares and claws. This is done in the next theorem.

\begin{theorem}[{\bf The Scarf complex of powers of some basic subgraphs}]\label{t:non-Scarf}
  Let $G$ be a graph with edge ideal $I=I(G)$, and let $t \in \NN$.
      \begin{enumerate}
      \item If $t\geq 1$ and $G$ is a triangle, then $\Scarf(I^t)$ is a set of  ${t+2 \choose 2}$ isolated  vertices.
      \item If $t \geq 2$ and $G$ is a path of length $3$, then $\Scarf(I^t)$
        is  the upper triangular portion of a $t \times t$ grid of squares as in \cref{f:powers-path}.
      \item If $t \geq 2$ and $G$ is a claw, then $\Scarf(I^t)$ is an cycle of length $3t$ together with ${{2+t}\choose{2}}-3t$ isolated vertices, as in \cref{f:powers-claw}.
      \item If $t \geq 1$ and $G$ is a square, then $\Scarf(I^t)$ is a $t \times t$ grid of squares as in \cref{f:powers-square}.
    \end{enumerate}
  In particular $I^t$ does not have a Scarf resolution in the above cases.
\end{theorem}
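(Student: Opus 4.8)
The theorem has four parts, one for each basic graph (triangle, path $P_3$, claw, square), and the statements are completely explicit descriptions of $\Scarf(I^t)$. My plan is to prove each part by a two-sided argument: first use \cref{l:non-scarf-edge} (and direct $\lcm$-sharing arguments for higher faces) to show that every face \emph{not} in the claimed complex is genuinely non-Scarf, and then show that every face \emph{in} the claimed complex has a label occurring exactly once in $\Taylor(I^t)$. Throughout, I would set up coordinates on the generators of $I^t$: for a graph with edges $e_1,\dots,e_k$, a generator of $I^t$ is $e_1^{a_1}\cdots e_k^{a_k}$ with $\sum a_i = t$, so $\Gens(I^t)$ is indexed by the lattice points of a dilated simplex, which is exactly why the answer counts come out as $\binom{t+2}{2}$ (for $k=3$: triangle and claw) and $t\times t$ grids (for $k=4$: path and square, though $P_3$ has only $3$ edges so one needs to be a bit careful there — actually $P_3 = a-b-c-d$ has $3$ edges, so its generators are again indexed by $\binom{t+2}{2}$ points, and the "upper triangular portion of a $t\times t$ grid" is the right combinatorial shape).

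\textbf{Key steps, in order.} (1) \emph{Vertices.} In every case the vertices of $\Scarf(I^t)$ are a subset of $\Gens(I^t)$; identify which generators have a unique $\lcm$-label among single generators — all of them, since distinct generators are distinct monomials — so the vertex set is all of $\Gens(I^t)$, and the combinatorial count $\binom{t+2}{2}$ or $(t+1)^2$ etc.\ is just the number of lattice points. (2) \emph{Non-edges.} For the triangle, apply \cref{l:non-scarf-edge}\eqref{i:1}: any two generators sharing the same $\bar\bm$ give a non-Scarf edge, and two generators with \emph{different} multidegrees on the triangle always have $\bar\bm$-parts that can be rewritten, so \emph{no} edge survives — hence $\Scarf$ is totally disconnected; count the vertices. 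For $P_3$, the claw, and the square, use \cref{l:non-scarf-edge}\eqref{i:2} repeatedly: whenever two generators $\bm,\bm'$ differ in the exponent of a vertex $b$ that lies on a "bridging" edge $bc$, and the rewriting $bc\cdot\bar\bm'$ (or $bc\cdot\bar\bm$) isn't already one of $\bm,\bm'$, the edge $\{\bm,\bm'\}$ dies. This kills all edges except the "grid edges" (differ by moving one unit of weight between two \emph{adjacent} edges of the graph in a way that cannot be shortcut) — I'd make the adjacency/shortcut analysis precise for each graph. (3) \emph{Surviving edges and $2$-faces.} Show the claimed edges (and, for path and square, the square $2$-cells; these are $\Taylor$ faces on $4$ generators forming a unit cell of the grid) do have unique labels: compute the $\lcm$ explicitly and check no other subset of $\Gens(I^t)$ reproduces it. For the claw, show there are exactly $3t$ surviving edges forming a single $3t$-cycle (the "outer boundary" of the simplex of lattice points) and no $2$-faces. (4) \emph{Homology obstruction.} In each case the resulting complex has nontrivial reduced homology — a disjoint union of $\geq 2$ points ($t\geq1$ triangle, since $\binom{t+2}{2}\geq 3$), a $3t$-cycle plus isolated points ($t\geq 2$ claw, giving $H_1\neq 0$), or a grid of squares whose full complex $\Delta_\bm$ at $\bm = (\text{product of all vertices})^t$ is the whole grid, which for the square is a $2$-sphere-like torus... — actually simpler: invoke \cref{t:BPS}: since $\Scarf(I^t)_{\bm}$ for the top label $\bm$ equals the whole complex and it is not acyclic (it has a cycle), $\Scarf(I^t)$ cannot support a resolution. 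This gives the "In particular" clause.

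\textbf{Main obstacle.} The hard part is the \emph{converse} direction — proving that every face in the claimed complex really does have a \emph{unique} label in all of $\Taylor(I^t)$, i.e.\ that no \emph{other} subset of generators (possibly large, possibly involving several edges) produces the same $\lcm$. The non-Scarf direction is mechanical via \cref{l:non-scarf-edge}, but showing a label is unique requires a clean normal-form argument: given a monomial $M$ that is an $\lcm$ of some generators of $I^t$, one must reconstruct \emph{canonically} the unique minimal face producing it. For the triangle this is easy (single vertices only). For $P_3$ and the square the grid structure helps: a unit square cell's label determines its four corners. The claw is the most delicate because its $\Scarf$ complex is a single long cycle sitting inside a $2$-dimensional simplex of generators, so one must argue that the "interior" lattice points and all higher faces fail uniqueness while the boundary edges succeed — I expect this case to need the most careful bookkeeping, tracking $\deg_x$ of the central vertex against the three leaves. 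Figures \cref{f:powers-path}, \cref{f:powers-claw}, \cref{f:powers-square} will carry much of the combinatorial intuition, and the proof should explicitly match each grid/cycle edge to a pair of generators differing by one unit of weight along an adjacency of $G$.
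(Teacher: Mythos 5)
Your overall two-sided strategy (kill non-edges by exhibiting a third generator or invoking \cref{l:non-scarf-edge}, verify the surviving faces have unique labels, then get the obstruction from \cref{t:BPS} at the top multidegree) is the same as the paper's, but one concrete step of your plan is false: you propose to show that the unit square cells of the grid are $2$-faces of $\Scarf(I^t)$ in the path and square cases (``the square $2$-cells \dots do have unique labels''). They are not. Take a unit cell in the path case with corners $m_1=a^ib^{t-k}c^{t-i}d^k$, $m_2=a^{i-1}b^{t-k}c^{t-i+1}d^k$, $m_3=a^ib^{t-k-1}c^{t-i}d^{k+1}$, $m_4=a^{i-1}b^{t-k-1}c^{t-i+1}d^{k+1}$; then $\lcm(m_1,m_2,m_3,m_4)=a^ib^{t-k}c^{t-i+1}d^{k+1}=\lcm(m_1,m_4)$, so the label of the $4$-element face is shared (with the diagonal pair and with every $3$-subset containing it), and neither the $2$-cell nor the diagonal edge lies in $\Scarf(I^t)$. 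The ``grid of squares'' in \cref{f:powers-path,f:powers-square} is a $1$-dimensional complex whose $4$-cycles are hollow. This is not cosmetic: if the squares were filled, the restriction of the complex to the multidegree $(abcd)^t$ would be contractible, and your own step (4) — the acyclicity obstruction via \cref{t:BPS} — would collapse in exactly these two cases; the failure of $I^t$ to be Scarf rests precisely on the hollow $4$-cycles.

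Two further points where the paper's route is lighter than what you anticipate. For uniqueness of an edge label, note that any $\theta\in\Taylor(I^t)$ with $\m_\theta=\lcm(m,n)$ can only involve generators dividing $\lcm(m,n)$; so it suffices to list the generators dividing $\lcm(m,n)$ and check that they are exactly $m$ and $n$, which is what the paper does for each surviving (``one unit of weight moved along an adjacency'') pair. For faces of dimension $\ge 2$ you need no bookkeeping at all: since $\Scarf(I^t)$ is a simplicial complex, every face induces a clique in its $1$-skeleton, and once the $1$-skeleton is identified as the grid, the $3t$-cycle, or a discrete set — all triangle-free — there are no higher faces; the isolated-vertex counts then come from comparing with the number of generators, $\binom{t+2}{2}$ or $(t+1)^2$. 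Finally, in the triangle case \cref{l:non-scarf-edge}(1) only handles pairs sharing the same $\bar\bm$ (e.g.\ $a^2b^2$ and $b^2c^2$ admit no such common $\bar\bm$), so your ``rewriting'' step needs to be replaced by a direct construction: the paper writes generators as $a^\alpha b^\beta c^\gamma$ with $\alpha+\beta+\gamma=2t$, $0\le\alpha,\beta,\gamma\le t$, and for any two of them exhibits a third generator $p=a^ib^\beta c^{2t-i-\beta}$ with $\lcm(m,n,p)=\lcm(m,n)$.
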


\begin{proof} We prove each item separately below.

                  \subsection*{\bf (1) Powers of Triangles}
	          Let $G$ be a triangle with edge ideal $I = (ab, bc,
                  ca)$.  Observe that any minimal generator of $I^t$ is of the form $(ab)^s(bc)^q(ca)^r$, where $s+q+r = t$. By setting $\alpha = s+r$, $\beta = s+q$ and $\gamma = q+r$ (or equivalently, $s = t- \gamma, q = t - \alpha$ and $r = t-\beta$), it can be seen that
                  $$I^t = (a^\alpha b^\beta c^\gamma \mid \alpha+\beta+\gamma = 2t \qand 0 \le \alpha, \beta, \gamma \le t).$$

                  Consider any two distinct minimal generators $m = a^\alpha b^\beta c^\gamma$ and $n = a^i b^j c^k$ of $I^t$. Without loss of generality, we may assume that $i > \alpha$ and $j < \beta$. Let $p = a^i b^\beta c^{2t-i-\beta}$. Note that $i+\beta > \alpha+\beta = 2t-\gamma \ge t$. Then, $p$ is a minimal generator of $I^t$ that is not equal to $m$ or $n$, and
                  $$\lcm(m,n,p) = \lcm(m,n).$$
                  Thus, $\{m,n\} \notin \Scarf(I^t).$ We have shown that the 1-skeleton of $\Scarf(I^t)$ has no edges. Hence, $\Scarf(I^t)$ consists of isolated vertices. Using the standard combinatorial formula for counting with repetition, it can be seen that $\Scarf(I^t)$ has exactly ${3+t-1 \choose t} = {t+2 \choose 2}$ vertices.

\subsection*{\bf (2) Powers of Paths:}
Let $G$ be a path of length $3$ with edge ideal $I = (ab, bc, cd)$.
Then for $t \ge 2$,
	$$I^t = (a^ib^{t-k}c^{t-i}d^k \mid 0 \leq i,k \leq t \qand
i+k\leq t).$$ We shall first examine when the edge connecting two
vertices with distinct labels, $m = a^ib^{t-k}c^{t-i}d^k$ and
$n=a^\alpha b^{t-\beta} c^{t-\alpha}d^\beta$, in $\Taylor(I^t)$ is an
edge of $\Scarf(I^t)$. Since we cannot have both $i = \alpha$ and $k = \beta$, without loss of generality, we may assume that $i> \alpha$. In this case, $i > 0$ and $m = ab \cdot \bar{m}$, where $\bar{m} \in I^{t-1}$.

If $\beta \not= 0$, then $n = cd \cdot \bar{n}$, where $\bar{n} \in I^{t-1}$. Also, $\deg_c(m) = t-i < t-\alpha = \deg_c(n)$. Thus, if $bc \cdot \bar{m} = a^{i-1}b^{t-k} c^{t-i+1} d^k \not= n$ (i.e., if $\alpha \not= i-1$ or $k \not= \beta$) then, by \cref{l:non-scarf-edge}, $\{m,n\} \notin \Scarf(I^t)$.

Suppose that $\beta = 0$. Consider $p = \frac{m}{ab} \cdot bc = a^{i-1}b^{t-k}c^{t-i+1}d^k$. If $p \not= n$ (i.e., if $\alpha \not= i-1$ or $k \not= 0$) then we have $\lcm(m,n,p) = \lcm(m,n)$, so $\{m,n\} \notin \Scarf(I^t)$.
	
It remains to consider the case where $\alpha = i-1$ and $k = \beta$
We claim that, in this case, $\{m,n\} \in \Scarf(I^t)$.
	Indeed, suppose that there exists another vertex whose label $q = a^xb^{t-y}c^{t-x}d^y$ divides $\lcm(m,n) = a^ib^{t-k}c^{t-i+1}d^k$. Then, $y \le k$ and $t-y \le t-k$. Thus, $y=k$. Also, $x \leq i$ and $t-x \leq t-\alpha = t-i+1$. Therefore, either $x=i$ or $x=i-1 = \alpha$. It then follows that either $q = m$ or $q = n$.
	
	By symmetry, the edge connecting vertices with labels $m$ and $n$ is in $\Scarf(I^t)$ if $\alpha = i$ and $\beta = k-1$.
	
	We have shown that the 1-skeleton of the Scarf complex
        $\Scarf(I^t)$ has the form in \cref{f:powers-path}.  Observe
        that there are no cliques of size larger than $2$ in this
        1-skeleton of $\Scarf(I^t)$. This forces the Scarf complex of
        $I^t$ to be exactly the same as its 1-skeleton, up to
        isolated vertices. By a standard counting argument, since $I$ has three generators, $I^t$ has at most ${{3+t-1}\choose{t}}={{2+t}\choose{2}}$ generators, which is precisely the number of vertices in \cref{f:powers-path}. Thus there are no isolated vertices and the Scarf complex is precisely the 1-skeleton.

        		\begin{figure}
        	\begin{tikzpicture}
        		\tikzstyle{point}=[inner sep=0pt]
        		\node (a)[point,label=left:{\tiny $a^tb^t$}] at (0,6) {};
        		\node (b)[point,label=above:{\tiny $a^{t{-}1}b^tc$}] at (2,6) {};
        		\node (c)[point,label=above:{\tiny $a^{t{-}2}b^tc^2$}] at (4,6) {};
        		\node (cc)[point,label=above:{\tiny $a^{t{-}3}b^tc^3$}] at (6,6) {};
        		\node (d)[point,label=above:{\tiny $ab^tc^{t{-}1}$}] at (8,6) {};
        		\node (e) [point,label=above:{\tiny $b^tc^t$}] at (10,6) {};
        		\node (f) [point,label=left:{\tiny $a^{t{-}1}b^{t{-}1}cd$}] at (2,5) {};
        		\node (g) [point,label=above:{\tiny $a^{t{-}2}b^{t{-}1}c^2d$}] at (4,5) {};
        		\node (gg) [point,label=above:{\tiny $a^{t{-}3}b^{t{-}1}c^3d$}] at (6,5) {};
        		\node (h) [point,label=above:{\tiny $ab^{t{-}1}c^{t{-}1}d$}] at (8,5) {};
        		\node (i) [point,label=above:{\tiny $b^{t{-}1}c^td$}] at (10,5) {};
        		\node (k) [point,label=below:{\tiny $a^{t{-}2}b^{t{-}2}c^2d^2$}] at (4,4) {};
        		\node (j) [point,label=below:{\tiny $a^{t{-}3}b^{t{-}2}c^3d^2$}] at (6,4) {};
        		\node (l) [point,label=below:{\tiny $ac^{t{-}1}d^t$}] at (8,2) {};
        		\node (m) [point,label=below:{\tiny $c^{t}d^{t{-}1}$}] at (10,2) {};
        		\node (n) [point,label=below:{\tiny $c^td^t$}] at (10,1) {};
        		\node (q) [point,label=below:{\tiny $abc^{t{-}1}d^{t{-}1}$}] at (8,3) {};
        		\node (r) [point,label=below:{\tiny $bc^{t{-}1}d^{t{-}1}$}] at (10,3) {};

        		\draw (a.center) -- (b.center);
        		\draw (b.center) -- (c.center);
        		\draw (c.center) -- (cc.center);
        		\draw (d.center) -- (e.center);
        		\draw (b.center) -- (f.center);
        		\draw (c.center) -- (g.center);
        		\draw (f.center) -- (g.center);
        		\draw (k.center) -- (g.center);
        		\draw (g.center) -- (gg.center);
        		\draw (h.center) -- (i.center);
        		\draw (d.center) -- (h.center);
        		\draw (e.center) -- (i.center);
        		\draw (j.center) -- (k.center);
        		\draw (l.center) -- (m.center);
        		\draw (m.center) -- (n.center);
        		\draw (cc.center) --(gg.center);
        		\draw (gg.center) -- (j.center);
        		\draw (q.center) -- (l.center);
        		\draw (m.center) -- (r.center);
        		\draw (q.center) -- (r.center);
        		
        		\filldraw [black] (a.center) circle (1pt);
        		\filldraw [black] (b.center) circle (1pt);
        		\filldraw [black] (c.center) circle (1pt);
        		\filldraw [black] (cc.center) circle (1pt);
        		\filldraw [black] (d.center) circle (1pt);
        		\filldraw [black] (e.center) circle (1pt);
        		\filldraw [black] (f.center) circle (1pt);
        		\filldraw [black] (g.center) circle (1pt);
        		\filldraw [black] (gg.center) circle (1pt);
        		\filldraw [black] (h.center) circle (1pt);
        		\filldraw [black] (i.center) circle (1pt);
        		\filldraw [black] (j.center) circle (1pt);
        		\filldraw [black] (k.center) circle (1pt);
        		\filldraw [black] (l.center) circle (1pt);
        		\filldraw [black] (m.center) circle (1pt);
        		\filldraw [black] (n.center) circle (1pt);
        		\filldraw [black] (q.center) circle (1pt);
        		\filldraw [black] (r.center) circle (1pt);
        		
        		\node [point,label=left:{\tiny $\ldots$}] at (7.6,6) {};
        		\node [point,label=left:{\tiny $\ldots$}] at (7.6,5) {};
        		\node [point,label=left:{\tiny $\ldots$}] at (7.6,4.4) {};
        		\node [point,label=left:{\tiny $\ddots$}] at (7.2,3.5) {};
        		\node [point,label=left:{\tiny $\vdots$}] at (8.2,3.5) {};
        		\node [point,label=left:{\tiny $\vdots$}] at (10.2,3.5) {};
        		\node [point,label=left:{\tiny $\vdots$}] at (10.2,4.5) {};
        	\end{tikzpicture}
        	\caption{The Scarf complex of powers of a path $I=(ab,bc,cd)$}\label{f:powers-path}
        \end{figure}

\subsection*{\bf (3) Powers of Claws:}
	Let $G$ be a claw with edge ideal $I = (ab, ac, ad)$.  It can
        be seen that, for $t \in \NN$,
	$$I^t = (a^tb^ic^jd^k \mid 0 \le i,j,k \le t \qand i+j+k = t).$$
	
	Similar to previous cases, we start by examining when the
        edge connecting two vertices, with distinct labels $m =
        a^tb^ic^jd^k$ and $n = a^tb^\alpha c^\beta d^\gamma$, in
        $\Taylor(I^t)$ is an edge inside $\Scarf(I^t)$.
	
	Consider first the case where $|i - \alpha| \ge 2$. Without loss of generality, we may assume that $i \ge \alpha+2$. Since $i+j+k = \alpha+\beta+\gamma = t$, we must have either $\beta > j$ or $\gamma > k$. Suppose that $\beta > j$. Set $p = a^tb^{\alpha+1}c^{\beta-1}d^\gamma$. Clearly, $p \not= m,n$ and $\lcm(m,n,p) = \lcm(m,n) = a^tb^ic^\beta d^{\max\{k, \beta\}}$. Thus, the edge connecting vertices with labels $m$ and $n$ is not in $\Scarf(I^t)$.
	
	A similar argument works for the cases where $|j-\beta| \ge 2$ or $|k-\gamma| \ge 2$. It remains to consider the case where $|i-\alpha|, |j-\beta|, |k-\gamma| \le 1$. Since $\alpha+\beta+\gamma = i+j+k = t$, elements in exactly one of these pairs must be the same. Without loss of generality, we may assume that $k= \gamma$, $i = \alpha+1$ and $j = \beta-1$.
	
	If $k > 0$ then set $p = a^t b^i c^{j+1} d^{k-1} = \frac{m}{ad}\cdot ac$. Observe that $p \not= m,n$ and $\lcm(m,n,p) = \lcm(m,n) = a^t b^ic^\beta d^k$. This, again, implies that the edge connecting vertices with labels $m$ and $n$ is not in $\Scarf(I^t)$.
	
	If $k=\gamma = 0$ then $m = a^tb^ic^j$ and $n = a^tb^{i-1}c^{j+1}$, with $i+j = t$. We claim that, in this case, the edge connecting vertices with labels $m$ and $n$ is in $\Scarf(I^t)$. Indeed, suppose that $q = a^t b^xc^yd^z$, where $x+y+z = t$, is the label of another vertex that divides $\lcm(m,n) = a^t b^ic^{j+1}$. Then, $z = 0$, $x \le i$ and $y \le j+1$. Since $i+j=x+y=t$, this implies that either $x=i-1$ and $y=j+1$ or $x=i$ and $y=j$, i.e., either $q = m$ or $q = n$.
	
	We have shown that the 1-skeleton of $\Scarf(I^t)$ is as depicted in \cref{f:powers-claw}, which is a $3t$ cycle. Since $t \ge 2$, there are no cliques of size larger than $2$ in the 1-skeleton. It then follows that $\Scarf(I^t)$ is exactly the same as its 1-skeleton, a $3t$-cycle, together with isolated vertices. Using a standard counting argument, there are at most ${{3+t-1}\choose{t}}={{t+2}\choose{t}}$ generators of $I^t$. Since each of $b, c, d$ appear in precisely one generator of $I$,  there are exactly ${{t+2}\choose{2}}$ generators, of which all but $3t$ are isolated.

	\begin{figure}
	\begin{tikzpicture}
		\tikzstyle{point}=[inner sep=0pt]
		\node (1)[point,label=above:{$a^tb^t$}] at (3,3) {};
		\node (2)[point,label=above:{$a^tb^{t{-}1}c$}] at (4.5,3) {};
		\node (3)[point,label=right:{$a^tb^{t{-}2}c^2$}] at (5.9,2.5) {};
		\node (a)[point] at (6.5,1.2) {};
		\node (a')[point] at (6.5,1) {};
		\node (a'')[point] at (6.5,0.8) {};
		\node (4)[point,label=right:{$a^tc^t$}] at (5.9,-0.5) {};
		\node (5)[point,label=below:{$a^tc^{t{-}1}d$}] at (4.5,-1) {};
		\node (6)[point,label=below:{$a^td^t$}] at (3,-1) {};
		\node (7)[point,label=left:{$a^tb^{t{-}1}d$}] at (1.5,2.5) {};
		\node (8)[point] at (0.9,1.2) {};
		\node (10)[point] at (0.9,1) {};
		\node (9)[point] at (0.9, 0.8) {};
		\node (11)[point,label=left:{$a^tbd^{t{-}1}$}] at (1.5,-0.5) {};
		\node (b)[point] at (3.6,-1) {};
		\node (b')[point] at (3.8,-1) {};
		\node (b'')[point] at (4,-1) {};
		
		\draw (1.center) -- (2.center);
		\draw (2.center) -- (3.center);
		\draw (3.center) -- (a.center);
		\draw (4.center) -- (5.center);
		
		\draw (6.center) -- (11.center);
		\draw (7.center) -- (1.center);
		\draw (7.center) -- (8.center);
		\draw (9.center) -- (11.center);
		\draw (4.center) -- (a''.center);

		\filldraw [black] (1.center) circle (1pt);
		\filldraw [black] (2.center) circle (1pt);
		\filldraw [black] (3.center) circle (1pt);
		\filldraw [black] (a.center) circle (1pt);
		\filldraw [black] (a'.center) circle (1pt);
		\filldraw [black] (a''.center) circle (1pt);
		\filldraw [black] (4.center) circle (1pt);
		\filldraw [black] (5.center) circle (1pt);
		\filldraw [black] (6.center) circle (1pt);
		\filldraw [black] (7.center) circle (1pt);
		\filldraw [black] (8.center) circle (1pt);
		\filldraw [black] (9.center) circle (1pt);
		\filldraw [black] (10.center) circle (1pt);
		\filldraw [black] (11.center) circle (1pt);
		\filldraw [black] (b.center) circle (1pt);
		\filldraw [black] (b'.center) circle (1pt);
		\filldraw [black] (b''.center) circle (1pt);
	\end{tikzpicture}
	\caption{$1$-Skeleton of the Scarf complex of powers of a claw $I=(ab,ac,ad)$}\label{f:powers-claw}
\end{figure}
	
\subsection*{\bf (4) Powers of Squares:}
	Let $G$ be a square with edge ideal $I = (ab, bc, cd,
        da)$. Observe that, for $t \in \NN$,
	$$I^t = (a^i b^j c^{t-i} d^{t-j} \mid 0\le i \le t \text{ and } 0 \le j \le t).$$
	As in previous cases, we start by examining when the edge connecting two vertices, with distinct labels $m = a^i b^j c^{t-i} d^{t-j}$ and $n = a^\alpha b^\beta c^{t-\alpha} d^{t-\gamma}$, of $\Taylor(I^t)$ remains an edge in $\Scarf(I^t)$.
	
	If $|i - \alpha| \ge 2$, then similar to what was done with the claw, by assuming that $i \ge \alpha+2$ and considering $p = a^{\alpha+1}b^\beta c^{t-\alpha+1} d^{t-\beta}$, we conclude that the edge connecting $m$ and $n$ is not in $\Scarf(I^t)$. The same argument works for the case where $|j - \beta| \ge 2$.
	
	Suppose that $|i-\alpha|, |j-\beta| \le 1$. Without loss of generality, we may assume that $i > \alpha$; that is, $\alpha = i-1$. If $|j-\beta| = 1$, then by considering $p = a^i b^\beta c^{t-i} d^{t-\beta}$, we again conclude that the edge between $m$ and $n$ is not in $\Scarf(I^t)$.
	
	It remains to consider the case where $\alpha = i-1$ and $\beta = j$ (and, by symmetry, when $\alpha = i$ and $\beta = j-1$). We claim that, in this case, the edge between $m$ and $n$ is an edge in $\Scarf(I^t)$. Indeed, suppose that $q = a^xb^yc^{t-x}d^{t-y}$ is another vertex of $\Taylor(I^t)$ that divides $\lcm(m,n) = a^ib^jc^{t-i+1}d^{t-j}$. Then, $x \le i$, $y \le j$, $t-x \le t-i+1$ and $t-y \le t-j$. This implies that $y=j$ and either $x=i$ or $x = i-1$. That is, either $q = m$ or $q = n$.
	
	We have shown that the $1$-skeleton of $\Scarf(I^t)$ is as
        depicted in \cref{f:powers-square}. As before, since there is
        no clique of size larger than $2$ in the $1$-skeleton of
        $\Scarf(I^t)$, $\Scarf(I^t)$ is exactly the same as its
        $1$-skeleton, possibly together with isolated vertices. Using the form of $I^t$ above, there are $t+1$ choices for both $i$ and $j$ that yield $(t+1)^2$ distinct monomial generators of $I^t$. Since there are $(t+1)^2$ vertices in \cref{f:powers-square}, the Scarf complex is precisely the $1$-skeleton, with no isolated vertices.

		\begin{figure}
	\begin{tikzpicture}
		\tikzstyle{point}=[inner sep=0pt]
		\node (a)[point,label=left:{\tiny $a^tb^t$}] at (0,5) {};
		\node (b)[point,label=above:{\tiny $a^{t{-}1}b^tc$}] at (2,5) {};
		\node (c)[point,label=above:{\tiny $a^{t{-}2}b^tc^2$}] at (4,5) {};
		\node (d)[point,label=above:{\tiny $ab^tc^{t{-}1}$}] at (6,5) {};
		\node (e) [point,label=right:{\tiny $b^tc^t$}] at (8,5) {};
		\node (2) [point,label=left:{\tiny $a^tb^{t{-}1}d$}] at (0,4) {};
		\node (f) [point,label=above:{\tiny $a^{t{-}1}b^{t{-}1}cd$}] at (2,4) {};
		\node (g) [point,label=above:{\tiny $a^{t{-}2}b^{t{-}1}c^2d$}] at (4,4) {};
		\node (h) [point,label=above:{\tiny $ab^{t{-}1}c^{t{-}1}d$}] at (6,4) {};
		\node (i) [point,label=right:{\tiny $b^{t{-}1}c^td$}] at (8,4) {};
		\node (3) [point,label=left:{\tiny $a^tbd^{t{-}1}$}] at (0,3) {};
		\node (j) [point,label=below:{\tiny $a^{t{-}1}cd^t$}] at (2,2) {};
		\node (k) [point,label=below:{\tiny $a^{t{-}2}c^2d^t$}] at (4,2) {};
		\node (l) [point,label=below:{\tiny $ac^{t{-}1}d^t$}] at (6,2) {};
		\node (4) [point,label=left:{\tiny $a^td^t$}] at (0,2) {};
		\node (m) [point,label=right:{\tiny $c^{t}d^{t}$}] at (8,2) {};
		
		\node (o) [point,label=below:{\tiny $a^{t{-}1}bcd^{t{-}1}$}] at (2,3) {};
		\node (p) [point,label=below:{\tiny $a^{t{-}2}bc^2d^{t{-}1}$}] at (4,3) {};
		\node (q) [point,label=below:{\tiny $abc^{t{-}1}d^{t{-}1}$}] at (6,3) {};
		\node (r) [point,label=right:{\tiny $bc^{t}d^{t{-}1}$}] at (8,3) {};

		\draw (a.center) -- (b.center);
		\draw (b.center) -- (c.center);
		\draw (d.center) -- (e.center);
		\draw (b.center) -- (f.center);
		\draw (c.center) -- (g.center);
		\draw (f.center) -- (g.center);
		\draw (h.center) -- (i.center);
		\draw (d.center) -- (h.center);
		\draw (e.center) -- (i.center);
		\draw (j.center) -- (k.center);
		\draw (l.center) -- (m.center);
		
		\draw (2.center) -- (a.center);
		\draw (2.center) -- (f.center);
		\draw (3.center) -- (4.center);
		\draw (3.center) -- (o.center);
		
		\draw (4.center) -- (j.center);
		\draw (r.center) -- (q.center);
		
		\draw (m.center) -- (r.center);
		\draw (l.center) -- (q.center);
		\draw (k.center) -- (p.center);
		\draw (j.center) -- (o.center);
		\draw (o.center) -- (p.center);
		
		\filldraw [black] (a.center) circle (1pt);
		\filldraw [black] (b.center) circle (1pt);
		\filldraw [black] (c.center) circle (1pt);
		\filldraw [black] (d.center) circle (1pt);
		\filldraw [black] (e.center) circle (1pt);
		\filldraw [black] (f.center) circle (1pt);
		\filldraw [black] (g.center) circle (1pt);
		\filldraw [black] (h.center) circle (1pt);
		\filldraw [black] (i.center) circle (1pt);
		\filldraw [black] (j.center) circle (1pt);
		\filldraw [black] (k.center) circle (1pt);
		\filldraw [black] (l.center) circle (1pt);
		\filldraw [black] (m.center) circle (1pt);
		
		\filldraw [black] (o.center) circle (1pt);
		\filldraw [black] (p.center) circle (1pt);
		\filldraw [black] (q.center) circle (1pt);
		\filldraw [black] (r.center) circle (1pt);
		\filldraw [black] (2.center) circle (1pt);
		\filldraw [black] (3.center) circle (1pt);
		\filldraw [black] (4.center) circle (1pt);
		
		\node [point,label=left:{\tiny $\ldots$}] at (5.6,5) {};
		\node [point,label=left:{\tiny $\ldots$}] at (5.6,4) {};
		\node [point,label=left:{\tiny $\vdots$}] at (2.2,3.5) {};
		\node [point,label=left:{\tiny $\vdots$}] at (4.2,3.5) {};
		\node [point,label=left:{\tiny $\vdots$}] at (6.2,3.5) {};
		\node [point,label=left:{\tiny $\vdots$}] at (8.2,3.5) {};
		\node [point,label=left:{\tiny $\vdots$}] at (0.2,3.5) {};
		\node [point,label=left:{\tiny $\ldots$}] at (5.6,3) {};
		\node [point,label=left:{\tiny $\ldots$}] at (5.6,2) {};
	\end{tikzpicture}
	\caption{The Scarf complex of powers of a square $I=(ab,bc,cd,da)$}\label{f:powers-square}
\end{figure}

\smallskip

Finally, to complete the proof of the theorem we consider the monomial $\m=a^tb^tc^t$ in Case~(1) and $\m=
a^tb^tc^td^t$ in cases (2) - (4).  Then $\m \in \LCM(I^t)$ and
$\Scarf(I^t)_{\m}$ has nontrivial homology in dimension $0$ in
Case~(1) and in dimension $1$ in all other cases. Hence, by \cref{t:BPS},
$I^t$ does not have a Scarf resolution in any of the cases (1) - (4).
\end{proof}

The main results of our paper are finally summarized in the following theorem.

\begin{theorem}[{\bf The \say{Beautiful Oberwolfach Theorem}}]\label{t:beautiful}  
Let $G$ be a graph with edge ideal $I = I(G)$. 
  \begin{enumerate}
  \item $I$ has a minimal free resolution
  supported on its Scarf complex if and only if $G$ is a gap-free forest.
  \end{enumerate}
   If $G$ is connected and $t>1$, then
  \begin{enumerate}[resume]
  \item $I^t$ has a minimal free resolution
  supported on its Scarf complex if and only if $G$ is an isolated vertex, an edge, or a path of length $2$.
  \end{enumerate}
\end{theorem}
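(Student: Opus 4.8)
The plan is to dispose of part $(1)$ at once --- it is exactly \cref{t:B36} --- and to prove part $(2)$ in two directions, with essentially all the work concentrated in the ``only if'' implication.

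\textbf{The forbidden-subgraph reduction (``only if'').} The first step is the elementary observation that a connected graph $G$ that is not an isolated vertex, a single edge, or a path of length $2$ contains an induced subgraph isomorphic to one of $C_3$, $C_4$, $P_3$, or a claw. I would prove this by a short case analysis on the maximum degree: if some vertex has degree at least $3$, three of its incident edges span an induced triangle (when two of the outer endpoints are adjacent) or an induced claw (when they are pairwise nonadjacent); and if the maximum degree is at most $2$, then $G$, being connected, is a path or a cycle, so once $P_0, P_1, P_2$ are excluded one locates an induced $C_3$, $C_4$, or $P_3$ on four suitably chosen consecutive vertices. Now let $H$ be such an induced subgraph and let $m_H$ be the product of its vertices; since $H$ has no isolated vertices, \cref{l:induced} gives $(m_H)^t \in \LCM(I(G)^t)$ together with $\Scarf(I(G)^t)_{(m_H)^t} = \Scarf(I(H)^t)$. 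Since $t \ge 2$, \cref{t:non-Scarf} describes $\Scarf(I(H)^t)$ explicitly, and in each of the four cases it is non-acyclic ($\binom{t+2}{2}\ge 6$ isolated vertices when $H = C_3$; a cell complex carrying a nontrivial $1$-cycle when $H$ is $C_4$, $P_3$, or a claw). Hence, by \cref{t:BPS}, $\Scarf(I(G)^t)$ cannot support a resolution of $I(G)^t$.

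\textbf{The three good cases (``if'').} If $G$ is an isolated vertex then $I = (0)$ and the claim is vacuous; if $G$ is a single edge then $I^t$ is principal, with Scarf complex a single point supporting its length-one minimal free resolution. The only case requiring an argument is $G = P_2$, say $I = (xy, yz)$: then $I^t$ is minimally generated by the monomials $x^i y^t z^{t-i}$ with $0 \le i \le t$, i.e. $I^t = y^t\,(x,z)^t$. Multiplication of every generator by the fixed monomial $y^t$ is an isomorphism of lcm lattices, hence of Taylor and of Scarf complexes, so it suffices to show that $(x,z)^t$ is Scarf. Writing $w_i = x^{t-i} z^i$, the lcm of any subset of $\{w_0, \dots, w_t\}$ is $x^{t-a} z^b$, where $a$ and $b$ are the least and greatest indices occurring; hence the faces of $\Taylor((x,z)^t)$ with a unique label are exactly the singletons and the consecutive pairs $\{w_i, w_{i+1}\}$, so $\Scarf((x,z)^t)$ is the path on $w_0, \dots, w_t$. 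For any $\m = x^{t-a} z^b$ in the lcm lattice, the induced subcomplex $\Scarf((x,z)^t)_{\m}$ is the subpath on $w_a, \dots, w_b$, which is contractible, and a singleton and an incident edge have distinct labels; so by \cref{t:BPS} this path supports the minimal free resolution of $(x,z)^t$, and therefore $I(P_2)^t$ is Scarf.

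\textbf{The main obstacle.} With \cref{t:non-Scarf} in hand, the genuinely delicate part --- the explicit computation of the Scarf complexes of powers of paths, claws, and squares --- is already done, so the difficulty of proving part $(2)$ essentially reduces to that earlier result; what remains here is the routine forbidden-subgraph dichotomy and the brief verification that $(x,z)^t$, equivalently $I(P_2)^t$, is Scarf, and I do not expect either to present a real obstacle.
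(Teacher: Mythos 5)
Your proposal is correct, and for the substantive direction it follows the same route as the paper: part (1) is quoted from \cref{t:B36}, and for $t\ge 2$ the ``only if'' direction is exactly the paper's argument --- locate an induced triangle, square, path of length $3$, or claw $H$, use \cref{l:induced} to identify $\Scarf(I(G)^t)_{(m_H)^t}$ with $\Scarf(I(H)^t)$, and conclude non-acyclicity from the explicit descriptions in \cref{t:non-Scarf} together with \cref{t:BPS}. Where you go beyond the paper's written proof is twofold, and both additions are sound: you sketch the maximum-degree case analysis establishing the forbidden-subgraph dichotomy, which the paper asserts without proof inside the proof of \cref{t:beautiful}, and, more importantly, you actually verify the ``if'' direction of part (2), which the paper's proof omits entirely. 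Your treatment of $G=P_2$ --- writing $I^t=y^t(x,z)^t$, noting that multiplication by $y^t$ preserves lcm's and divisibility so the Taylor and Scarf complexes and the \cref{t:BPS} conditions transfer, and then checking that $\Scarf((x,z)^t)$ is the path on $w_0,\dots,w_t$ whose restriction to any lcm-lattice multidegree is a (contractible) subpath --- is a clean and complete way to settle that case, and the isolated-vertex and single-edge cases are indeed trivial. So the proposal is not merely equivalent to the paper's proof; it closes the converse half of part (2) that the paper leaves implicit.
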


  \begin{proof} The case $t=1$ is in \cref{t:B36}. Suppose that $t \geq 2$ and
    $G$ is not one of the the graphs listed above. Then $G$ has an
    induced subgraph $G'$ which is a triangle, a path of length $3$, a
    square or a claw with three edges, as depicted below.

            \begin{center}
    	\begin{tabular}{ccccccc}	
    		\begin{tikzpicture}
    			\tikzstyle{point}=[inner sep=0pt]
    			\node (a)[point] at (0.7,1.5) {};
    			\node (b)[point] at (0,0) {};
    			\node (c)[point] at (1.5,0) {};
    			
    			\draw (a.center) -- (b.center);
    			\draw (b.center) -- (c.center);
    			\draw (c.center) -- (a.center);
    			
    			\filldraw [black] (a.center) circle (1pt);
    			\filldraw [black] (b.center) circle (1pt);
    			\filldraw [black] (c.center) circle (1pt);
    		\end{tikzpicture}
    		&\quad \quad&
    		\begin{tikzpicture}
    			\tikzstyle{point}=[inner sep=0pt]
    			\node (a)[point] at (0,0) {};
    			\node (b)[point] at (1,1.5) {};
    			\node (c)[point] at (1,0) {};
    			\node (d)[point] at (2,1.5) {};

    			\draw (a.center) -- (b.center);
    			\draw (b.center) -- (c.center);
    			\draw (c.center) -- (d.center);
    			
    			\filldraw [black] (a.center) circle (1pt);
    			\filldraw [black] (b.center) circle (1pt);
    			\filldraw [black] (c.center) circle (1pt);
    			\filldraw [black] (d.center) circle (1pt);   			
    		\end{tikzpicture}
    	   		&\quad \quad&
    	\begin{tikzpicture}
    		\tikzstyle{point}=[inner sep=0pt]
    		\node (a)[point] at (0,0) {};
    		\node (b)[point] at (0,1.5) {};
    		\node (d)[point] at (1.5,0) {};
    		\node (c)[point] at (1.5,1.5) {};
    		
    		\draw (a.center) -- (b.center);
    		\draw (b.center) -- (c.center);
    		\draw (c.center) -- (d.center);
    		\draw (a.center) -- (d.center);
    		
    		\filldraw [black] (a.center) circle (1pt);
    		\filldraw [black] (b.center) circle (1pt);
    		\filldraw [black] (c.center) circle (1pt);
    		\filldraw [black] (d.center) circle (1pt);   			
    	\end{tikzpicture}
       		&\quad \quad&
  \begin{tikzpicture}
    	\tikzstyle{point}=[inner sep=0pt]
    	\node (a)[point] at (0,0) {};
    	\node (b)[point] at (0,1.5) {};
    	\node (c)[point] at (1.3, 1.3) {};
    	\node (d)[point] at (1.5,0) {};
    	
    	\draw (a.center) -- (b.center);
    	\draw (a.center) -- (c.center);
    	\draw (a.center) -- (d.center);
    	
    	\filldraw [black] (a.center) circle (1pt);
    	\filldraw [black] (b.center) circle (1pt);
    	\filldraw [black] (c.center) circle (1pt);
    	\filldraw [black] (d.center) circle (1pt);   			
    \end{tikzpicture} \\
\text{Triangle} & & \text{Path of length 3} & & \text{Square} & & \text{Claw}
    	\end{tabular}
    \end{center}

 \noindent   It follows from  \cref{t:non-Scarf} that, in each of these
    cases, there is a monomial $u \in \LCM(I(G')^t$ where
    $\Scarf(I(G')^t)_{u}$ has nontrivial homology. \cref{l:induced}
      now implies that $u \in \LCM(I^t)$ and $\Scarf(I^t)_{u}$
      has nontrivial homology. This, together with \cref{t:BPS}, implies that $\Scarf(I^t)$ does not support a free resolution of $I^t$. 

    On the other hand, if $G$ is an isolated vertex ($I = (0)$) or $G$ is an edge ($I = (uv)$), then $I^t$ has a minimal free resolution supported by its Scarf complex in a trivial way. If $G$ is a path of length 2, then $G$ is an induced subgraph of the path of length 3. By \Cref{l:induced}, it follows that the Scarf complex of $I^t$ is of the form of the top row of the complex in \Cref{f:powers-path}. Hence, applying \Cref{t:BPS}, we conclude that $I^t$ admits a minimal free resolution supported by its Scarf complex.
  \end{proof}

  \begin{figure}[H]
	\begin{tikzpicture}
		\tikzstyle{point}=[inner sep=0pt]
		\node (a)[point,label=left:{\tiny $x^t$}] at (4,6) {};
		\node (b)[point,label=left:{\tiny $x^{t-1}ab$}] at (3,5) {};
		\node (c)[point,label=right:{\tiny $x^{t-1}bc$}] at (5,5) {};
		\node (d) [point,label=left:{\tiny $x^{t-2}a^2b^2$}] at (2,4) {};
        \node (e) [point,label=above:{\tiny $x^{t-2}ab^2c$}] at (4,4) {};
		\node (f) [point,label=right:{\tiny $x^{t-2}b^2c^2$}] at (6,4) {};
		\node (g) [point,label=left:{\tiny $xa^{t-1}b^{t-1}$}] at (1,3) {};
		\node (h) [point,label=below:{\tiny $xa^{t-2}b^{t-1}c$}] at (2,3) {};
        \node (o) [point,label=above:{\tiny $xa^{t-3}b^{t-1}c^2$}] at (3,3) {};
		\node (i) [point,label=below:{\tiny $xab^{t-1}c^{t-2}$}] at (6,3) {};
		\node (j) [point,label=right:{\tiny $xb^{t-1}c^{t-1}$}] at (7,3) {};
		\node (k) [point,label=left:{\tiny $a^tb^t$}] at (0,2) {};
		\node (l) [point,label=below:{\tiny $a^{t-1}b^tc$}] at (1,2) {};
        \node (p) [point,label=above:{\tiny $a^{t-2}b^tc^2$}] at (2,2) {};
        \node (q) [point,label=below:{\tiny $a^{t-3}b^tc^3$}] at (3,2) {};
		\node (m) [point,label=below:{\tiny $ab^tc^{t-1}$}] at (7,2) {};
		\node (n) [point,label=right:{\tiny $b^tc^t$}] at (8,2) {};

		\draw (a.center) -- (b.center);
		\draw (b.center) -- (c.center);
        \draw (c.center) -- (a.center);
		\draw (d.center) -- (e.center);
		\draw (e.center) -- (f.center);
        \draw (b.center) -- (d.center);
		\draw (c.center) -- (f.center);
		\draw (g.center) -- (h.center);
        \draw (o.center) -- (h.center);
		\draw (i.center) -- (j.center);
		\draw (k.center) -- (l.center);
        \draw (l.center) -- (p.center);
        \draw (p.center) -- (q.center);
		\draw (m.center) -- (n.center);
		\draw (g.center) -- (k.center);
		\draw (j.center) -- (n.center);

		\filldraw [black] (a.center) circle (1pt);
		\filldraw [black] (b.center) circle (1pt);
		\filldraw [black] (c.center) circle (1pt);
		\filldraw [black] (d.center) circle (1pt);
		\filldraw [black] (e.center) circle (1pt);
		\filldraw [black] (f.center) circle (1pt);
		\filldraw [black] (g.center) circle (1pt);
		\filldraw [black] (h.center) circle (1pt);
		\filldraw [black] (i.center) circle (1pt);
		\filldraw [black] (j.center) circle (1pt);
		\filldraw [black] (k.center) circle (1pt);
		\filldraw [black] (l.center) circle (1pt);
		\filldraw [black] (m.center) circle (1pt);
		\filldraw [black] (n.center) circle (1pt);
        \filldraw [black] (o.center) circle (1pt);
        \filldraw [black] (p.center) circle (1pt);
        \filldraw [black] (q.center) circle (1pt);

        \draw  [fill=gray!20] (a.center) -- (b.center) -- (c.center) -- cycle;
        
		\node [point,label=left:{\tiny $\iddots$}] at (1.9,3.6) {};
		\node [point,label=left:{\tiny $\vdots$}] at (4.4,3.6) {};
		\node [point,label=left:{\tiny $\ddots$}] at (6.9,3.6) {};
		\node [point,label=left:{\tiny $\ldots$}] at (5.6,3) {};
        \node [point,label=left:{\tiny $\ldots$}] at (4,3) {};
		\node [point,label=left:{\tiny $\ldots$}] at (4.6,2) {};
        \node [point,label=left:{\tiny $\ldots$}] at (6.4,2) {};
	\end{tikzpicture}
	\caption{The Scarf complex of powers of a disconnected graph with a path of length $2$ and an isolated vertex $I=(ab,bc,x)$}\label{f:powers-disconnected-path}
\end{figure}
  \begin{remark}
      Let $G$ be a disconnected graph and $t > 1$. A similar analysis to that of \Cref{t:non-Scarf} shows that:
      \begin{enumerate}
          \item If $G$ has a path $a -b -c$ of length 2 in at least one of the connected component then $\Scarf(I^t)$ contains a triangle together with a striated pyramid as in \Cref{f:powers-disconnected-path} as an induced subcomplex (where $x$ is a vertex in another connected component of $G$).
          \item If $G$ has at least 3 connected components then $\Scarf(I^t)$ contains an induced cycle of length $3t$ as in \Cref{f:powers-claw-2} as an induced subcomplex (where $u,x,z$ are vertices in different connected components of $G$, and $v,y,w$ are vertices connected to $u, x, z$, respectively, if such edges exist or 1 otherwise).
      \end{enumerate}
      Both of these cases result in induced subcomplexes with nontrivial homology, violating the condition in \Cref{t:BPS}.
     Hence, $I^t$ admits a minimal free resolution supported by its Scarf complex if and only if $G$ has exactly 2 connected components and each of these components is either an isolated vertex or an edge. We leave the detailed arguments to the interested reader.
  \end{remark}

\smallskip

\begin{figure}
	\begin{tikzpicture}
		\tikzstyle{point}=[inner sep=0pt]
		\node (1)[point,label=above:{\tiny $(uv)^t$}] at (3,3) {};
		\node (2)[point,label=above right:{\tiny $(uv)^{t{-}1}(xy)$}] at (4.5,3) {};
		\node (3)[point,label=right:{\tiny $(uv)^{t{-}2}(xy)^2$}] at (5.9,2.5) {};
		\node (a)[point] at (6.5,1.2) {};
		\node (a')[point] at (6.5,1) {};
		\node (a'')[point] at (6.5,0.8) {};
		\node (4)[point,label=right:{\tiny $(xy)^t$}] at (5.9,-0.5) {};
		\node (5)[point,label=below right:{\tiny $(xy)^{t{-}1}(zw)$}] at (4.5,-1) {};
		\node (6)[point,label=below:{\tiny $(zw)^t$}] at (3,-1) {};
		\node (7)[point,label=left:{\tiny $(uv)^{t{-}1}(zw)$}] at (1.5,2.5) {};
		\node (8)[point] at (0.9,1.2) {};
		\node (10)[point] at (0.9,1) {};
		\node (9)[point] at (0.9, 0.8) {};
		\node (11)[point,label=left:{\tiny $(uv)(zw)^{t{-}1}$}] at (1.5,-0.5) {};
		\node (b)[point] at (3.6,-1) {};
		\node (b')[point] at (3.8,-1) {};
		\node (b'')[point] at (4,-1) {};
		
		\draw (1.center) -- (2.center);
		\draw (2.center) -- (3.center);
		\draw (3.center) -- (a.center);
		\draw (4.center) -- (5.center);
		
		\draw (6.center) -- (11.center);
		\draw (7.center) -- (1.center);
		\draw (7.center) -- (8.center);
		\draw (9.center) -- (11.center);
		\draw (4.center) -- (a''.center);

		\filldraw [black] (1.center) circle (1pt);
		\filldraw [black] (2.center) circle (1pt);
		\filldraw [black] (3.center) circle (1pt);
		\filldraw [black] (a.center) circle (1pt);
		\filldraw [black] (a'.center) circle (1pt);
		\filldraw [black] (a''.center) circle (1pt);
		\filldraw [black] (4.center) circle (1pt);
		\filldraw [black] (5.center) circle (1pt);
		\filldraw [black] (6.center) circle (1pt);
		\filldraw [black] (7.center) circle (1pt);
		\filldraw [black] (8.center) circle (1pt);
		\filldraw [black] (9.center) circle (1pt);
		\filldraw [black] (10.center) circle (1pt);
		\filldraw [black] (11.center) circle (1pt);
		\filldraw [black] (b.center) circle (1pt);
		\filldraw [black] (b'.center) circle (1pt);
		\filldraw [black] (b''.center) circle (1pt);
	\end{tikzpicture}
	\caption{$1$-Skeleton of the Scarf complex of powers of a disconnected graph with three edges $I=(uv, xy, zw)$}\label{f:powers-claw-2}
\end{figure}

\subsubsection*{Acknowledgements}

The research for this paper was initiated during the authors' two week
stay at the Mathematisches Forschungsinstitut in Oberwolfach, as
Oberwolfach Research Fellows, and continued while three of the authors
were visiting the Vietnam Institute of Advanced Study in Mathematics
(VIASM).  We would like to thank MFO and VIASM for their warm
hospitality, and providing optimal research environments.

Our work was aided by computations using the computer
algebra software Macaulay2~\cite{M2}. We are also grateful to the referees for their helpful comments.

We would also like to acknowledge other research funding that was used
for this project. Sara Faridi was supported by NSERC Discovery Grant 2023-05929. T\`ai H\`a was supported by the Louisiana Board of Regents
and a Simons Foundation Grant.  Takayuki Hibi was partially supported
by JSPS KAKENHI 19H00637.



\begin{thebibliography}{2020}

\bibitem{AMFRG2020} J. \`Alvarez Montaner, O.~Fern\'andez-Ramos, P.~Gimenez,
{\it Pruned cellular free resolutions of monomial ideals},
J. Alg. 541 (2020), 126-145.

\bibitem{BPS} D.~Bayer, I.~Peeva, B.~Sturmfels, {\it Monomial
  resolutions}, Math. Res. Lett. 5, no.~1-2 (1998) 31--46.

\bibitem{BS} D.~Bayer, B.~Sturmfels, {\it Cellular resolutions of
  monomial modules}, J. Reine Angew. Math. 503 (1998) 123--140.

\bibitem{BM2020} M.~Barile, A.~Macchia,
{\it Minimal cellular resolutions of the edge ideal of forests},
Electron. J. Combin. 27 (2020), no. 2, Paper No. 2.41, 16 pp.

\bibitem{BW2002} E.~Batzies, V.~Welker,
{\it Discrete Morse theory for cellular resolutions},
J. Reine Angew. Math. 542 (2002), 147-168.

\bibitem{CK2022} T.~Chau, S.~Kara,
{\it Barile-Macchia resolutions},
 J. Algebraic Combin. 59 (2024), no. 2, 413-472. 

\bibitem{CT2016} T.B.P.~Clark, A.~Tchernev,
{\it Regular CW-complexes and poset resolution of monomial ideals},
Comm. Alg. 44 (2016), no. 6, 2707-2718.

\bibitem{L2} S.~M.~Cooper, S.~El Khoury, S.~Faridi, S~Mayes-Tang,
  S.~Morey, L.~M.~\c{S}ega, S.~Spiroff, {\it Simplicial resolutions
    for the second power of square-free monomial ideals}, 
    Women in Commutative Algebra -- Proceedings of the 2019 WICA Workshop (2021).

\bibitem{koszul} S.~M.~Cooper, S.~El Khoury, S.~Faridi, S~Mayes-Tang,
  S.~Morey, L.~M.~\c{S}ega, S.~Spiroff, {\it Powers of graphs \& applications to resolutions of powers of monomial ideals}, 
Research in the Mathematical Sciences, 9 (2022), no. 2, Paper No. 31. 


  \bibitem{Lr} S.~M.~Cooper, S.~El Khoury, S.~Faridi, S.~Mayes-Tang,
  S.~Morey, L.~M.~\c Sega and S.~Spiroff, {\it Simplicial resolutions
    of powers of square-free monomial ideals}, Algebraic
  Combinatorics, Volume 7 (2024) no. 1, pp. 77-107.


\bibitem{EMO} J.~Eagon, E.~Miller, E.~Ordog, {\it Minimal resolutions of monomial ideals}, ArXiv:1906.08837.


\bibitem{M2} D.R.~Grayson, M.E.~Stillman, {\it Macaulay2, a software
  system for research in algebraic geometry}, Available at {\it
  http://www.math.uiuc.edu/Macaulay2/}.

\bibitem{HVT} H.~T.~H\`a, A.~Van Tuyl, {\it Splittalbe ideals and the resolutions of monomial ideals}, J. Alg. 309 (2007), 405-425.

\bibitem{L} G.~Lyubeznik, {\it A new explicit finite free resolution
  of ideals generate by monomials in an $R$-sequence}, J. Pure
  Appl.~Algebra 51 (1998) 193-195.

\bibitem{M} J. Mermin, {\it Three simplicial resolutions}, Progress in
  commutative algebra 1, 127-141, de Gruyter, Berlin, (2012).

\bibitem{OY2015} R.~Okazaki, K.~Yanagawa,
{\it On CW complexes supporting Eliahou-Kervaire type resolutions of Borel fixed ideals},
Collect. Math. 66 (2015), no. 1, 125-147.


\bibitem{PV} I.~Peeva, M.~Velasco, {\it Frames and degenerations of
  monomial ideals}, Trans. Amer. Math. Soc. 363 (2011), no. 4,
  2029--2046.

\bibitem{T} D.~Taylor, {\it Ideals generated by monomials in an
  $R$-sequence,} Thesis, University of Chicago (1966).

\bibitem{V} M.~Velasco, {\it Minimal free resolutions that are not supported by a CW-complex}, J. Algebra 319, no.~1 (2008) 102-114.

\bibitem{Zheng} X.~Zheng, {\it Resolutions of facet ideals}, Comm. Alg. 32 (2004), 2301-2324.

\end{thebibliography}
\end{document}